\def\reftext{}
\def\eqncr{\\}
\newtheorem{thm}{Theorem}[section]
\newtheorem{proposition}[thm]{Proposition}
\newtheorem{corollary}[thm]{Corollary}
\newtheorem{theorem}[thm]{Theorem}
\newtheorem{lemma}[thm]{Lemma}
\theoremstyle{definition}
\newtheorem{definition}[thm]{Definition}
\newtheorem{remark}[thm]{Remark}
\DeclareMathOperator{\opvtex}{op}
\newcommand{\dbar}{d\hspace{-0.08em}\bar{}\hspace{0.1em}}
\newcommand{\Hcirc}{\mathring{H}}
\newcommand{\cHcirc}{\mathring{\mathcal{H}}}
\newcommand{\cKcirc}{\mathring{\mathcal{K}}}
\newcommand{\revtex}{\operatorname{Re}}
\numberwithin{equation}{section}
\begin{document}
\title[Bounded $H_{\infty}$-calculus  on conic manifolds with boundary]%
{Bounded $H_{\infty}$-calculus for Boundary Value Problems on Manifolds with Conical Singularities}
\author{Nikolaos Roidos}
\address{Department of Mathematics, University of Patras, Rio Patras, Greece}
\email{roidos@math.upatras.gr}
\author{Elmar Schrohe}
\address{Institut f\"ur Analysis, Leibniz Universit\"at Hannover, Germany}
\email{schrohe@math.uni-hannover.de}
\thanks{N. Roidos and E. Schrohe were supported by Deutsche Forschungsgemeinschaft, grant SCHR 319/9-1, in the priority program {\em Geometry at Infinity.}}

\author{J\"org Seiler}
\address{Dipartimento di Matematica, Universit\`a degli Studi di Torino, Turin, Italy}
\email{joerg.seiler@unito.it}
\date{}

\begin{abstract} 
Realizations of differential operators subject to differential boundary conditions on manifolds 
with conical singularities are shown to have a bounded $H_\infty$-calculus in appropriate 
$L_p$-Sobolev spaces provided suitable conditions of parameter-ellipticity are satisfied. 
Applications concern the Dirichlet and Neumann Laplacian and the porous medium equation. 
\end{abstract}

\keywords{Conic manifolds with boundary, bounded $H_\infty$-calculus, realizations of elliptic boundary value problems}
\subjclass[2010]{}

\maketitle
\setcounter{tocdepth}{1}
\tableofcontents

\section{Introduction}
\label{sec:intro}

In this article, we study the $H_{\infty }$-calculus of parameter-elliptic
boundary value problems on manifolds with conical singularities on the
boundary, following up on earlier work in \cite{BSS,CSS0,DSS,SS2}. Moreover,
we present a new way of determining their realizations. As an application
we treat the porous medium equation.

A manifold of dimension $n+1$ with conical singularities on the boundary
is a compact topological space $D$, which contains finitely many points
$\{d_{1},\ldots ,d_{N}\}$ such that

\begin{itemize}
\item[(1)] $D_{\mathrm{reg}}:=D\setminus \{d_{1},\ldots ,d_{N}\}$ is an
$(n+1)$-dimensional smooth manifold with boundary,
\item[(2)] each point $d_{j}$ has a neighborhood which is homeomorphic
to a cone $C_{j} = [0,1)\times Y_{j}/\{0\} \times Y_{j}$ with an $n$-dimensional
manifold with boundary $Y_{j}$.
\end{itemize}

We replace $C_{1}\cup \ldots \cup C_{N}$ by the cylinder
$[0,1)\times Y$, where $Y$ is the disjoint union of the $Y_{j}$, obtaining
a space denoted by ${\mathbb{D}}$. Note that $D_{\mathrm{reg}}$ can be identified
with
${\mathbb{D}}_{\mathrm{reg}}:={\mathbb{D}}\setminus (\{0\}\times Y)$. The
boundary
${\mathbb{B}}_{\mathrm{reg}}:=\partial {\mathbb{D}}_{\mathrm{reg}}$ is the
regular part of an $n$-dimensional conic manifold $($without boundary$)$
${\mathbb{B}}$ which contains the cylinder $[0,1)\times \partial Y$. On
the cylindrical parts, we use variables $(x,y)$ with $x\in [0,1)$ and
$y\in Y$. For more details we refer the reader to
\cite[Section 1.1]{SS_1} or \cite[Section 3]{LS}.

Vector bundles over ${\mathbb{D}}$ are assumed to be smooth over
${\mathbb{D}}_{\mathrm{reg}}$. On the cylindrical part a vector bundle
$E$ over ${\mathbb{D}}$ is the pull-back of a smooth vector bundle
$E_{0}$ over $Y$ under the canonical projection $(x,y)\mapsto y$; the same
applies to vector bundles over ${\mathbb{B}}$. All bundles are supposed
to carry a hermitian structure compatible with the product structure on
the cylindrical part.

A cone differential operator on ${\mathbb{D}}$ acting on sections of a bundle
$E$ $($or, more generally, between sections of two possibly different bundles$)$
is a differential operator with smooth coefficients on the regular part,
while, on the cylindrical part, it has the form
\begin{equation}
\label{eq:A}
A=x^{-\mu }\sum _{j=0}^{\mu }a_{j}(x)(-x\partial _{x})^{j}, \qquad \mu =
\text{order of }A,
\end{equation}
where each $a_{j}(x)$ is a family of differential operators of order
$\mu -j$ on $Y$ acting on sections of $E_{0}$, smooth up to $x=0$. Initially,
we consider $A$ as a map in
${\mathscr{C}}^{\infty ,\infty }({\mathbb{D}},E)$, the space of smooth sections
of $E$ that vanish to infinite order in the tip $x=0$. To give an example,
let $\mathbb{D}_{\textrm{reg}}$ be endowed with a Riemannian metric which,
on the cylindrical part, has the form
\begin{eqnarray}
\label{eq:metric}
g=dx^{2}+x^{2}h(x),
\end{eqnarray}
with a family $h(x)$ of Riemannian metrics on $Y$, smooth up to
$x=0$ $($i.e., $g$ is the metric of a warped cone$)$. Then the Laplacian
associated with $g$ is a second order cone differential operator on
${\mathbb{D}}$. See Section~\ref{sec:Laplacian} for further details.

A differential boundary condition for $A$ as above is a vector
\begin{equation}
\label{eq:bc}
T=(T_{0},\ldots ,T_{\mu -1}),\qquad T_{j}=\gamma _{0}\circ B_{j}:{
\mathscr{C}}^{\infty ,\infty }({\mathbb{D}},E)\longrightarrow {
\mathscr{C}}^{\infty ,\infty }({\mathbb{B}},F_{j}),
\end{equation}
where each $B_{j}$ is a cone differential operator of order $j$ on
${\mathbb{D}}$ acting from sections of $E$ to sections of some other bundle
$F_{j}^{\prime }$ and where $\gamma _{0}$ denotes the operator of restriction
to the boundary (and $F_{j}$ is the restriction of $F^{\prime }_{j}$ to the
boundary). We allow some of the $F_{j}^{\prime }$ to be of dimension
$0$; in that case the condition $T_{j}$ is void. Setting
$F:= F_{0}\oplus \ldots \oplus F_{\mu -1}$, we will consider $T$ as a map
${\mathscr{C}}^{\infty ,\infty }({\mathbb{D}},E) \to {\mathscr{C}}^{
\infty ,\infty }({\mathbb{B}},F)$.

Given $A$ and $T$ we study the operator $A_{T}$, acting like $A$ on the
domain
\begin{equation}
\label{eq:realization}
{\mathscr{D}}(A_{T})={\mathscr{C}}^{\infty ,\infty }({\mathbb{D}},E)_{T} :=
{\mathscr{C}}^{\infty ,\infty }({\mathbb{D}},E)\cap \ker T
\end{equation}
as an unbounded operator in weighted $L_{p}$-Sobolev spaces
${\mathcal{H}}^{s,\gamma }_{p}({\mathbb{D}},E)$; here $s$ measures smoothness,
on the cylindrical part with respect to $x\partial _{x}$- and
$\partial _{y}$-derivatives, while $\gamma \in {\mathbb{R}}$ refers to a
weight function which coincides with $x^{\gamma }$ on the cylindrical part;
see the appendix for details. The main objective of this article is to
establish the existence of a bounded $H_{\infty }$-calculus for closed extensions
$\underline{A}_{T}$ of $A_{T}$ with domain
${\mathscr{D}}(\underline{A}_{T})\subseteq {\mathcal{H}}^{\mu ,\gamma }_{p}({
\mathbb{D}},E)\cap \ker T$; such extensions are also called
\textit{realizations} of $A$ \textit{subject to the condition $T$}.

This problem has already been considered in \cite{CSS0}, where it has been
shown that a bounded $H_{\infty }$-calculus exists, provided the resolvent
of $\underline{A}_{T}$ has a specific pseudodifferential structure. So far,
however, only few cases were known where the resolvent is of this kind.
Combining the techniques developed in \cite{SS2} for conic manifolds without
boundary with results of Krainer \cite{Kr}, we are now able to treat all
realizations $\underline{A}_{T}$ that are parameter-elliptic in the sense
of Section~\ref{sec:hinfty}. The resolvent is then constructed with the
help of a pseudodifferential calculus for boundary value problems on manifolds
with edges, as presented e.g.~in Kapanadze, Schulze \cite{KS}.

Our methods pertain, in particular, to the Dirichlet and Neumann Laplacian
as discussed in Section~\ref{sec:Laplacian}. As an application we show
in Section~\ref{sec:PME} the existence of a short time solution to the
porous medium equation on the conic manifold ${\mathbb{D}}$ with Neumann
boundary conditions for positive data.

In the appendix of this paper we recall basic definitions of function spaces
on manifolds with conic singularities and present key elements of a calculus
for pseudodifferential operators on manifolds with conical singularities
on the boundary which we will need in the proof of our main theorem.

\section{Boundary value problems for cone differential operators}
\label{sec:bvp}

Let $A$ and $T$ be as in \reftext{\eqref{eq:A}} and \reftext{\eqref{eq:bc}}, respectively.
Consider the boundary value problem
\begin{equation}
\label{eq:fullbvp}
{\mathcal{A}}:=\binom{A}{T}: {\mathscr{C}}^{\infty ,\infty }({\mathbb{D}},E)
\longrightarrow
\begin{matrix}
{\mathscr{C}}^{\infty ,\infty }({\mathbb{D}},E)
\\
\oplus
\\
{\mathscr{C}}^{\infty ,\infty }({\mathbb{B}},F)%
\end{matrix}
.
\end{equation}
After a normalization of the orders of the boundary operators,
${\mathcal{A}}$ can be considered as an element of Boutet de Monvel's algebra
for boundary value problems in the sense of Schrohe, Schulze
\cite{SS_1,SS_2}. In this class, Shapiro-Lopatinskii ellipticity is characterized
by a number of $($principal$)$ symbols associated with each element. This
leads us to associate with ${\mathcal{A}}$ the following symbols:
\begin{enumerate}
\item[(1)] 
The principal symbol
$\pmb{\sigma }_{\psi }^{\mu }({\mathcal{A}})=\pmb{\sigma }_{\psi }^{\mu }(A)$,
an endomorphism of $\pi _{\mathbb{D}}^{*}E$, where
$\pi _{\mathbb{D}}: T^{*}\mathbb{D}_{\textrm{reg}}\setminus 0 \to
\mathbb{D}_{\textrm{reg}}$ is the canonical projection.
\item[(2)] The principal boundary symbol
$\pmb{\sigma }^{\mu }_{\partial }({\mathcal{A}})$, a morphism
\begin{equation*}
{\mathscr{S}}({\mathbb{R}}_{+})\otimes \pi _{\mathbb{B}}^{*}E|_{{
\mathbb{B}}} \longrightarrow {\mathscr{S}}({\mathbb{R}}_{+})\otimes \pi _{
\mathbb{B}}^{*}(E|_{{\mathbb{B}}}\oplus F),
\end{equation*}
where
$\pi _{\mathbb{B}}: T^{*}\mathbb{B}_{\textrm{reg}}\setminus 0 \to
\mathbb{B}_{\textrm{reg}}$ is the canonical projection.
\item[(3)] The conormal symbol $\pmb{\sigma }_{M}^{\mu }({\mathcal{A}})$, a polynomial
on ${\mathbb{C}}$, taking values in boundary problems on $Y$. Its definition
is recalled below.
\end{enumerate}

Both principal symbol and principal boundary symbol degenerate near
$x=0$. Using variables $(x,y,\xi ,\eta )$ on the cotangent bundle of the
cylindrical part $(0,1)\times Y$, the rescaled principal symbol is
\begin{align}
\label{eq:rescaledprinc}
\widetilde{\pmb{\sigma }}_{\psi }^{\mu }({\mathcal{A}})(y,\eta ,\xi )=
\lim _{x\to 0^{+}} x^{\mu }\pmb{\sigma }_{\psi }^{\mu }({\mathcal{A}})(x,y,x^{-1}
\xi ,\eta ).
\end{align}
Similarly, with $(x,y',\xi ,\eta ')$ in the cotangent bundle of
$(0,1)\times \partial Y$, the rescaled principal boundary symbol is
\begin{align}
\label{rescaledbdry}
\widetilde{\pmb{\sigma }}^{\mu }_{\partial }({\mathcal{A}})(y',\xi ,
\eta ') =\lim _{x\to 0^{+}}
\binom{x^{\mu }\pmb{\sigma }_{\partial }^{\mu }(A)(x,y',x^{-1}\xi ,\eta ')}{S(x)\pmb{\sigma }^{\mu }_{\partial }(T)(x,y',x^{-1}\xi ,\eta ')},
\end{align}
where $S(x)=\mathrm{diag}(1,x,\ldots , x^{\mu -1})$. The rescaled principal
symbol is an endomorphism of $\pi ^{*}E_{0}$, while the rescaled principal
boundary symbol is a morphism
${\mathscr{S}}({\mathbb{R}}_{+})\otimes \pi ^{*}_{\partial Y} E_{0}|_{
\partial Y} \to {\mathscr{S}}({\mathbb{R}}_{+})\otimes \pi ^{*}_{
\partial Y}(E_{0}|_{\partial Y}\oplus F_{0})$.

The conormal symbol of $A$ is the polynomial
\begin{align}
\label{eq:conormalA}
\pmb{\sigma }_{M}^{\mu }(A)(z) = \sum _{j=0}^{\mu }a_{j}(0) z^{j}: {
\mathbb{C}}\longrightarrow \mathrm{Diff}^{\mu }(Y,E_{0})
\end{align}
with $a_{j}$ as in \reftext{\eqref{eq:A}}, taking values in the differential operators
of order $\mu -j$ on the cross-section $Y$. For the boundary condition
$T$
\begin{align}
\label{eq:conormalT}
\pmb{\sigma }_{M}^{\mu }(T)(z)=\Big (\gamma _{0} \pmb{\sigma }_{M}^{0}(B_{0})(z),
\ldots ,\gamma _{0} \pmb{\sigma }_{M}^{\mu -1}(B_{\mu -1})(z)\Big ),
\end{align}
where $\gamma _{0}$ denotes the operator of restriction to the boundary
of $Y$. In particular, the conormal symbol
$\pmb{\sigma }^{\mu }_{M}({\mathcal{A}})=
\binom{\pmb{\sigma }^{\mu }_{M}(A)}{\pmb{\sigma }^{\mu }_{M}(T)}$ furnishes
a map
\begin{align}
\label{eq:conormal-symbol}
\pmb{\sigma }_{M}^{\mu }({\mathcal{A}})(z): H^{s}_{p}(Y,E_{0})
\longrightarrow
\begin{matrix}
H^{s-\mu }_{p}(Y,E_{0})
\\
\oplus
\\
\mathop{\text{$\oplus $}}\limits _{j=0}^{\mu -1} H^{s-j-1/p}_{pp}(
\partial Y,F_{0})
\end{matrix},\qquad z\in {\mathbb{C}},
\end{align}
for every $1<p<+\infty $ and $s>\mu -1+1/p$ $($with a slight abuse of notation,
the trace spaces on the right are the usual Besov spaces with indices
$p=q)$.

\begin{definition}%
\label{def:elliptic}
The boundary value problem ${\mathcal{A}}$ is called \emph{$\mathbb{D}$-elliptic}, if each of the four symbols
$\pmb{\sigma }_{\psi }^{\mu }({\mathcal{A}})$,
$\widetilde{\pmb{\sigma }}_{\psi }^{\mu }({\mathcal{A}})$,
$\pmb{\sigma }_{\partial }^{\mu }({\mathcal{A}})$, and
$\widetilde{\pmb{\sigma }}_{\partial }^{\mu }({\mathcal{A}})$ is invertible
outside the zero-section.

It is called elliptic with respect to the weight
$\gamma \in {\mathbb{R}}$, if additionally the conormal symbol \reftext{\eqref{eq:conormal-symbol}} is invertible for all $z\in {\mathbb{C}}$ with
$\revtex z = \frac{n+1}{2}-\gamma $.
\end{definition}

It can be shown, cf. \cite[Theorem 4.1.6]{SS_2}, that the conormal symbol
of a ${\mathbb{D}}$-elliptic boundary value problem is meromorphically invertible
in the complex plane, independently of the choice of $s$ and $p$, due to
spectral invariance in Boutet de Monvel's algebra of boundary value problems.
Ellipticity with respect to a weight $\gamma $ thus is just the requirement
that none of the poles has real part equal to $\frac{n+1}{2}-\gamma $.

\subsection{${\mathbb{D}}$-ellipticity with parameter}
\label{sec2.1}

Of crucial importance for this paper will be the notion of ellipticity
with respect to a parameter in a sector. For $\theta >0$ define
\begin{equation}
\label{Lambda}
\Lambda = \Lambda _{\theta }= \{re^{i\varphi }\in {\mathbb{C}}\mid r\ge 0,
\ \theta \leq \varphi \leq 2\pi -\theta \}.
\end{equation}

\begin{definition}%
\label{def:parameter-elliptic}
The boundary value problem ${\mathcal{A}}$ is called ${\mathbb{D}}$-elliptic
with parameter in $\Lambda $, if
$\lambda -\pmb{\sigma }_{\psi }^{\mu }({\mathcal{A}})$,
$\lambda -\widetilde{\pmb{\sigma }}_{\psi }^{\mu }({\mathcal{A}})$,
$\binom{\lambda }{0}-\pmb{\sigma }_{\partial }^{\mu }({\mathcal{A}})$ and
$\binom{\lambda }{0}-\widetilde{\pmb{\sigma }}_{\partial }^{\mu }({
\mathcal{A}})$ are invertible for every $\lambda \in \Lambda $.
\end{definition}

Ellipticity with parameter implies that $T$ is a normal boundary condition
in the sense of Grubb \cite[Definition 1.4.3]{Gr} and
\cite[Definition 3.6]{CSS1}; the argument is the same as in the smooth
case, see \cite[Lemma 1.5.7]{Gr}. In particular
\begin{equation}
\label{eq:Tsurjective}
T:{\mathcal{H}}^{s,\gamma }_{p}({\mathbb{D}},E) \longrightarrow \mathop{
\text{$\oplus $}}\limits _{j=0}^{\mu -1} {\mathcal{H}}^{s-j-1/p,\gamma -j-1/2}_{pp}({
\mathbb{B}},F_{j})
\end{equation}
is surjective for every choice of $\gamma \in {\mathbb{R}}$,
$1<p<+\infty $ and $s>\mu -1+1/p$. There exists a right-inverse in Boutet
de Monvel's calculus, as constructed for example in Lemma 3.4 and Proposition
3.7 of \cite{CSS1}. The trace spaces on the right are weighted Besov spaces
with parameters $p=q$. Though some of the discussion below remains valid
for ${\mathbb{D}}$-elliptic boundary value problems with normal boundary
condition, we shall make the following assumption which stands throughout
the whole paper:

\medskip\noindent
\textbf{Assumption:} The boundary value problem
${\mathcal{A}}=\binom{A}{T}$ is ${\mathbb{D}}$-elliptic with parameter in
$\Lambda $.

\section{Domains and realizations}
\label{sec3}

In this section let $\gamma \in {\mathbb{R}}$ and $1<p<+\infty $ be fixed.

\subsection{Closed extensions of the full boundary value problem}
\label{sec:binomAT}

Let us now consider ${\mathcal{A}}$ as an unbounded operator
\begin{align}
\label{eq:sA-unbounded}
{\mathcal{A}}:{\mathscr{C}}^{\infty ,\infty }({\mathbb{D}},E)\subset {
\mathcal{H}}^{0,\gamma }_{p}({\mathbb{D}},E) \longrightarrow
\begin{matrix}
{\mathcal{H}}^{0,\gamma }_{p}({\mathbb{D}},E)
\\
\oplus
\\
\mathop{\text{$\oplus $}}\limits _{j=0}^{\mu -1} {\mathcal{H}}^{\mu -j-1/p,
\gamma +\mu -j-1/2}_{pp}({\mathbb{B}},F_{j}).
\end{matrix}
\end{align}
We let ${\mathscr{D}}_{\min }({\mathcal{A}})$ denote the domain of the closure,
while ${\mathscr{D}}_{\max }({\mathcal{A}})$ consists of all elements
$u\in {\mathcal{H}}^{\mu ,\gamma }_{p}({\mathbb{D}},E)$ such that
${\mathcal{A}}u$ belongs to the space on the right-hand side of \reftext{\eqref{eq:sA-unbounded}}. Note that in the definition of
${\mathscr{D}}_{\max }({\mathcal{A}})$ the a-priori regularity $\mu $ is required.

Obviously it would be more precise to use notations like
${\mathscr{D}}_{\max }^{\gamma ,p}({\mathcal{A}})$ to indicate the dependence
on $\gamma $ and $p$. However, to keep notation more lean we shall not
do so.

The first statement, below, is shown in \cite[Lemma 4.10]{Kr}, the second
follows from \cite[Proposition 4.3]{CSS1}.

\begin{theorem}%
\label{thm:Dmin}
The minimal domain satisfies
\begin{equation*}
{\mathscr{D}}_{\min }({\mathcal{A}})={\mathscr{D}}_{\max }({\mathcal{A}})
\cap \mathop{\text{\large $\cap $}}_{\varepsilon >0}{\mathcal{H}}^{\mu ,
\gamma +\mu -\varepsilon }_{p}({\mathbb{D}},E).
\end{equation*}
If ${\mathcal{A}}$ is elliptic with respect to the weight
$\gamma +\mu $, then
\begin{equation*}
{\mathscr{D}}_{\min }({\mathcal{A}})={\mathcal{H}}^{\mu ,\gamma +\mu }_{p}({
\mathbb{D}},E).
\end{equation*}
\end{theorem}

In the following theorem we write $Y^{\wedge }=(0,+\infty )\times Y$ and
let $E^{\wedge }$ denote the pull-back of $E_{0}=E_{|x=0}$ under the canonical
projection $Y^{\wedge }\to Y$. We call
$\omega \in {\mathscr{C}}^{\infty }_{c}([0,1))$ a cut-off function, if it
is non-negative and $\equiv 1 $ near $0$. We will also consider
$\omega $ as a function on ${\mathbb{D}}$, supported in the cylindrical
part of ${\mathbb{D}}$.

\begin{theorem}%
\label{thm:Dmax}
There exists a finite-dimensional space
${\mathscr{F}}\subset {\mathscr{C}}^{\infty }(Y^{\wedge },E^{\wedge })$ such that
$\omega {\mathscr{F}}\subset {\mathcal{H}}^{\infty ,\gamma +
\varepsilon }_{p}({\mathbb{D}})$ for some $\varepsilon >0$ and
\begin{equation*}
{\mathscr{D}}_{\max }({\mathcal{A}})={\mathscr{D}}_{\min }({\mathcal{A}})
\oplus \omega {\mathscr{F}},
\end{equation*}
where $\omega $ is an arbitrary cut-off function. In particular, any closed
extension $\underline{{\mathcal{A}}}\subset {\mathcal{A}}_{\max }$ is given
by a domain of the form
\begin{equation*}
{\mathscr{D}}(\underline{{\mathcal{A}}})={\mathscr{D}}_{\min }({
\mathcal{A}})\oplus \omega \underline{{\mathscr{F}}},\qquad
\text{$\underline{{\mathscr{F}}}$ subspace of ${\mathscr{F}}$}.
\end{equation*}
\end{theorem}

Using the identification
\begin{equation*}
{\mathscr{C}}^{\infty }(Y^{\wedge },E^{\wedge })={\mathscr{C}}^{\infty }\big ((0,+
\infty ),{\mathscr{C}}^{\infty }(Y,E_{0})\big ),
\end{equation*}
the elements of ${\mathscr{F}}$ are linear combinations of functions of
the form
\begin{align}
\label{eq:F}
u(x,y)=v(y) x^{-q}\log ^{k} x,\quad v\in {\mathscr{C}}^{\infty }(Y,E_{0}),
\end{align}
with complex numbers $q$ satisfying
$\frac{n+1}{2}-\gamma -\mu \le \revtex q<\frac{n+1}{2}-\gamma $, and integers
$k\ge 0$. A detailed description can be found in \cite[Section 6]{Kr}.
We shall present here an alternative, which combines the corresponding
result for manifolds without boundary from \cite[Section 3]{SS2} with the
approach of \cite[Sections 4 and 5]{CSS1}.

Similarly to \reftext{\eqref{eq:conormalA}} and \reftext{\eqref{eq:conormalT}} we shall introduce
so-called lower order conormal symbols as
\begin{align}
\label{eq:conormalA_low}
\pmb{\sigma }_{M}^{\mu -k}(A)(z) = \frac{1}{k!}\sum _{j=0}^{\mu
}\frac{d^{k}a_{j}}{dx^{k}}(0) z^{j}, \qquad k\in {\mathbb{N}},
\end{align}
and
\begin{align}
\label{eq:conormalT_low}
\pmb{\sigma }_{M}^{\mu -k}(T)(z)=\Big (\gamma _{0} \pmb{\sigma }_{M}^{-k}(B_{0})(z),
\ldots ,\gamma _{0} \pmb{\sigma }_{M}^{\mu -1-k}(B_{\mu -1})(z)\Big ).
\end{align}
We write
\begin{equation*}
\mathbf{f}_{k}:=\pmb{\sigma }_{M}^{\mu -k}({\mathcal{A}})=
\binom{\pmb{\sigma }_{M}^{\mu -k}(A)}{\pmb{\sigma }_{M}^{\mu -k}(T)},
\qquad k\in {\mathbb{N}}_{0},
\end{equation*}
and define recursively the following functions:
\begin{equation}
\label{eq:recursion}
\mathbf{g}_{0}=1,\qquad \mathbf{g}_{\ell }=-(\mathrm{T}^{-\ell }
\mathbf{f}^{-1}_{0})\sum _{j=0}^{\ell -1}(\mathrm{T}^{-j}
\mathbf{f}_{\ell -j})\mathbf{g}_{j},\qquad \ell \in {\mathbb{N}},
\end{equation}
where the shift-operator $\mathrm{T}^{\rho }$, $\rho \in {\mathbb{R}}$, acts
on meromorphic functions by $(\mathrm{T}^{\rho }h)(z)=h(z+\rho )$. Note that
the $\mathbf{g}_{\ell }$ are meromorphic with values in Boutet de Monvel's
algebra on $Y$ and that the recursion is equivalent to
\begin{equation}
\label{eq:recursion2}
\sum _{j=0}^{\ell }(\mathrm{T}^{-j}\mathbf{f}_{\ell -j})\mathbf{g}_{j}
=
\begin{cases}
\mathbf{f}_{0}&\quad : \ell =0
\\
0&\quad : \ell \ge 1%
\end{cases}
.
\end{equation}
In the following we let
\begin{equation}
\label{eq:strip}
S_{\gamma }=\Big \{\sigma \in {\mathbb{C}}\mid \sigma
\text{ is a pole of $\mathbf{f}_{0}^{-1}$ and } \frac{n+1}{2}-\gamma -
\mu <\revtex \sigma <\frac{n+1}{2}-\gamma \Big \}
\end{equation}
and we use the Mellin transform
\begin{equation*}
({\mathcal{M}}u)(z) = \widehat{u}(z)=\int _{0}^{\infty }x^{z} u(x)
\frac{dx}{x}.
\end{equation*}

\begin{theorem}%
\label{thm:theta}
For $\sigma \in S_{\gamma }$ and $\ell \in {\mathbb{N}}_{0}$ define
\begin{equation*}
\mathbf{G}_{\sigma }^{(\ell )}:
\begin{matrix}
{\mathscr{S}}^{\infty }(Y^{\wedge },E^{\wedge })
\\
\oplus
\\
{\mathscr{S}}^{\infty }(\partial Y^{\wedge },F^{\wedge })
\end{matrix}
\longrightarrow {\mathscr{C}}^{\infty }(Y^{\wedge },E^{\wedge })
\end{equation*}
by
\begin{equation}
\label{eq:gsigmal}
(\mathbf{G}_{\sigma }^{(\ell )} u)(x)= x^{\ell }\,\int _{|z-\sigma |=
\varepsilon } x^{-z}\mathbf{g}_{\ell }(z)\,\Pi _{\sigma }(\mathbf{f}_{0}^{-1}
\,\widehat{u})(z)\,\dbar z,
\end{equation}
where $\Pi _{\sigma }h$ denotes the principal part of the Laurent series
of a meromorphic function $h$ in $\sigma $ $(\Pi _{\sigma }h=0$, if
$h$ is holomorphic in $\sigma )$, $\dbar z= \frac{1}{2\pi i} dz$, and
$\varepsilon >0$ is so small that none of the $z$ with
$0<|z-\sigma |<\varepsilon $ is a pole of the integrand. Moreover, let
\begin{equation}
\label{eq:gsigma}
\mathbf{G}_{\sigma }:=\sum _{\ell =0}^{\mu _{\sigma }}\mathbf{G}_{\sigma }^{(
\ell )},\qquad \mu _{\sigma }:=\Big \lfloor \revtex \sigma +\mu +\gamma -
\frac{n+1}{2}\Big \rfloor,
\end{equation}
where $\lfloor r\rfloor$ denotes the integer part of $r\in {\mathbb{R}}$. Then
\begin{equation*}
{\mathscr{F}}=\mathop{\text{\Large $\oplus $}}_{\sigma \in S_{\gamma }} {
\mathscr{F}}_{\sigma }, \qquad {\mathscr{F}}_{\sigma }:=\mathrm{range}\,
\mathbf{G}_{\sigma }.
\end{equation*}
\end{theorem}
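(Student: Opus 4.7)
The plan is to prove the two statements---that each $\omega\bfG_\sigma u$ represents a class in $\sD_{\max}(\cA)/\sD_{\min}(\cA)$ and that $\bigoplus_{\sigma\in S_\gamma}\sF_\sigma$ exhausts $\sF$---by a Mellin-transform computation in the spirit of \cite[Section 3]{SS2}, adapted to the boundary value setting via the Mellin calculus for Boutet de Monvel's algebra developed in \cite{CSS1,KS}. The computation rests on the identity
\begin{equation*}
\widehat{\cA v}(z) \;=\; \sum_{k\geq 0} \bff_k(z-\mu+k)\,\widehat{v}(z-\mu+k),
\end{equation*}
which follows by Taylor-expanding the coefficient operators of $A$ (and of each $B_j$) in $x$ around $0$ and applying the shift rule $\widehat{x^k(-x\partial_x)^j w}(z) = (z+k)^j \widehat w(z+k)$ together with the $-\mu$ shift from the prefactor $x^{-\mu}$; the analogous identity for the boundary block uses the lower-order conormal symbols \eqref{eq:conormalT_low} together with the Mellin representation of the trace operator $\gg_0$.

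The heart of the argument is to apply this identity to $v = \bfG_\sigma u = \sum_{\ell=0}^{\mu_\sigma}\bfG_\sigma^{(\ell)}u$ and exploit the recursion \eqref{eq:recursion2}. By construction $\bfG_\sigma^{(\ell)}u = x^\ell v_\ell$ where $\widehat{v_\ell}$ has in a neighborhood of $\sigma$ a single singularity with principal part $\bfg_\ell(z)\,\Pi_\sigma(\bff_0^{-1}\widehat u)(z)$, so $\widehat{\bfG_\sigma^{(\ell)}u}(z)=\widehat{v_\ell}(z+\ell)$ has principal part $\bfg_\ell(z+\ell)\,\Pi_\sigma(\bff_0^{-1}\widehat u)(z+\ell)$ at $z=\sigma-\ell$. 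Collecting the contributions of $\widehat{\cA(\bfG_\sigma u)}(z)$ to a putative singularity at $z = \sigma+\mu-r$ with $r=\ell+k$ and substituting $w = z-\mu+r$, the total principal part at $w=\sigma$ equals
\begin{equation*}
\Big(\sum_{\ell=0}^r (\mathrm{T}^{-\ell}\bff_{r-\ell})(w)\,\bfg_\ell(w)\Big)\,\Pi_\sigma(\bff_0^{-1}\widehat u)(w),
\end{equation*}
which by \eqref{eq:recursion2} collapses to $\bff_0(w)\,\Pi_\sigma(\bff_0^{-1}\widehat u)(w)$ when $r=0$ and vanishes for $r\geq 1$. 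Since $\bff_0$ is polynomial in $z$ and $\widehat u$ is entire for $u$ in the test space, the residual $r=0$ term $\bff_0\cdot\Pi_\sigma(\bff_0^{-1}\widehat u) = \widehat u - \bff_0\cdot\bigl(\bff_0^{-1}\widehat u - \Pi_\sigma(\bff_0^{-1}\widehat u)\bigr)$ is itself holomorphic at $\sigma$. Hence $\widehat{\cA(\bfG_\sigma u)}$ has no singularities in the relevant strip, so $\omega\bfG_\sigma u\in\sD_{\max}(\cA)$ and $\cA(\omega\bfG_\sigma u)$ lies in the target space of \eqref{eq:sA-unbounded} modulo $\sC^{\infty,\infty}$-contributions coming from commutators with the cut-off.

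For the exhaustion $\sF=\bigoplus_\sigma\sF_\sigma$, I would evaluate the contour integrals \eqref{eq:gsigmal} by residues: each $\bfG_\sigma^{(\ell)}u$ reduces to a finite sum of terms $v(y)\,x^{-\sigma+\ell}\log^m x$, matching the asymptotic description of $\sF$ given in \cite[Section 6]{Kr}. Since the exponents $-\sigma+\ell$ with $\sigma\in S_\gamma$ and $0\leq\ell\leq\mu_\sigma$ fill precisely the admissible range $\frac{n+1}{2}-\gamma-\mu\leq\re q<\frac{n+1}{2}-\gamma$ in \eqref{eq:F}, the image of each $\bfG_\sigma$ records exactly the Laurent principal part at $\sigma$ of $\bff_0^{-1}\widehat u$, and distinct poles produce independent local data, one obtains directness of the sum and a dimension count matching $\dim\sF<\infty$ from Theorem \ref{thm:Dmax}. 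The step I expect to be the main obstacle is the careful justification of the Mellin identity for the full boundary value problem rather than only the interior operator $A$: trace operators and the Green-type remainders of Boutet de Monvel's algebra do not Taylor-expand in $x$ as naively as interior differential operators, so one must invoke the Mellin representation of the boundary symbol families in Boutet de Monvel's algebra, as developed in \cite{KS} and applied in \cite{CSS1}; a secondary technicality is to control the tail of the formal Taylor expansion in $x$ so that truncation errors are absorbed into $\sD_{\min}(\cA)$ via Theorem \ref{thm:Dmin}.
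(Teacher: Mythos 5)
Your proposal is correct and follows precisely the route the paper itself indicates: the paper's proof of Theorem~\ref{thm:theta} is the one line ``The proof is a straightforward adaption of the proof of \cite[Section 3]{SS2}'', and your Mellin--Taylor computation with the recursion~\eqref{eq:recursion2} and residue evaluation is exactly that adaptation carried out. The one technical worry you single out is actually not present here: since $T=(\gamma_0 B_0,\ldots,\gamma_0 B_{\mu-1})$ is purely differential (no Green-type remainders appear in $\cA=\binom AT$), each $B_j$ Taylor-expands in $x$ just as $A$ does, the Mellin shift in the $j$-th boundary block being by $j$ rather than $\mu$; the recursion~\eqref{eq:recursion2} is a vector identity and applies componentwise with the appropriate shifted argument, so your cancellation argument goes through as written.
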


The proof is a straightforward adaption of the proof of
\cite[Section 3]{SS2}.

\subsection{Realizations subject to a boundary condition}
\label{3.2}

We shall next determine the closed extensions in
${\mathcal{H}}^{0,\gamma }_{p}({\mathbb{D}})$ of the operator $A_{T}$ introduced
in \reftext{\eqref{eq:realization}}. Let ${\mathscr{D}}_{\min }(A_{T})$ be the domain
of the closure of $A_{T}$ and define the maximal extension of
$A_{T}$ by the action of $A$ on
\begin{equation*}
{\mathscr{D}}_{\max }({A}_{T})=\big \{u\in \mathcal{H}^{\mu ,\gamma }_{p}(
\mathbb{D},E)\mid Au\in \mathcal{H}^{0,\gamma }_{p}(\mathbb{D,E}),\;Tu=0
\big \}.
\end{equation*}

There is a natural relation between the closed extensions of
${\mathcal{A}}$ and those of $A_{T}$:

\begin{lemma}[\text{\cite[Lemma 4.10]{Kr}}]
\label{ce}
We have
\begin{equation}
\label{eq:quotient-isom}
{\mathscr{D}}_{\max }({\mathcal{A}})/{\mathscr{D}}_{\min }({\mathcal{A}})
\cong {\mathscr{D}}_{\max }(A_{T})/{\mathscr{D}}_{\min }(A_{T})
\end{equation}
and the map
\begin{equation*}
V\mapsto V_{T}:=V\cap \mathrm{ker}\,T
\end{equation*}
is a bijection between the lattice of intermediate spaces
${\mathscr{D}}_{\min }({\mathcal{A}})\subseteq V\subseteq {\mathscr{D}}_{
\max }({\mathcal{A}})$ and the lattice of intermediate spaces
${\mathscr{D}}_{\min }(A_{T})\subseteq V_{T}\subseteq {\mathscr{D}}_{
\max }(A_{T})$.
\end{lemma}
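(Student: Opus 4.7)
The strategy is to prove the quotient isomorphism \eqref{eq:quotient-isom} first and then deduce the lattice bijection by an elementary algebraic argument. The identity $\sD_{\max}(A_T)=\sD_{\max}(\cA)\cap\ker T$ is tautological from the definitions, while $\sD_{\min}(A_T)\subseteq\sD_{\min}(\cA)$ holds because graph-norm Cauchy sequences in the core $\sC^{\infty,\infty}(\D,E)_T$ of $A_T$ are trivially Cauchy for $\cA$, the trace being identically zero. Consequently the inclusion $\sD_{\max}(A_T)\hookrightarrow \sD_{\max}(\cA)$ descends to a well-defined linear map
\[
\Phi:\sD_{\max}(A_T)/\sD_{\min}(A_T)\longrightarrow \sD_{\max}(\cA)/\sD_{\min}(\cA),\qquad [u]\mapsto [u].
\]

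The key ingredient is the right inverse $K$ of the trace map supplied by \cite{CSS1}, taken at weight $\gamma+\mu$,
\[
K:\bigoplus_{j=0}^{\mu-1}\cH^{\mu-j-1/p,\gamma+\mu-j-1/2}_{pp}(\B,F_j)\longrightarrow \cH^{\mu,\gamma+\mu}_p(\D,E),\qquad TK=\mathrm{id},
\]
with the additional property that $K$ maps $\sC^{\infty,\infty}(\B,F)$ into $\sC^{\infty,\infty}(\D,E)$. The range of $K$ actually lies in $\sD_{\min}(\cA)$: for any $g$, the element $Kg$ lies in $\cH^{\mu,\gamma+\mu-\eps}_p$ for every $\eps>0$, and $A(Kg)\in\cH^{0,\gamma}_p$ because a cone differential operator of order $\mu$ drops the weight by $\mu$; hence $Kg\in\sD_{\max}(\cA)$, and Theorem \ref{thm:Dmin} then places $Kg$ into $\sD_{\min}(\cA)$. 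Surjectivity of $\Phi$ is now immediate: for $u\in\sD_{\max}(\cA)$ put $\tilde u:=u-K(Tu)$, so that $T\tilde u=0$ and $\tilde u\in\sD_{\max}(A_T)$, while $u-\tilde u=K(Tu)\in\sD_{\min}(\cA)$. For injectivity, take $u\in\sD_{\max}(A_T)\cap\sD_{\min}(\cA)$ and pick $u_n\in\sC^{\infty,\infty}(\D,E)$ with $\cA u_n\to\cA u$ in graph norm; in particular $Tu_n\to 0$ in the trace space at weight $\gamma+\mu$. Setting $\tilde u_n:=u_n-K(Tu_n)\in\sC^{\infty,\infty}(\D,E)_T$, continuity of $K$ into $\cH^{\mu,\gamma+\mu}_p(\D,E)$ and of $A:\cH^{\mu,\gamma+\mu}_p\to \cH^{0,\gamma}_p$ yield $\tilde u_n\to u$ and $A\tilde u_n\to Au$ in $\cH^{0,\gamma}_p(\D,E)$, whence $u\in \sD_{\min}(A_T)$.

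For the lattice bijection, injectivity of $\Phi$ is equivalent to $\sD_{\min}(A_T)=\sD_{\min}(\cA)\cap\ker T$, so $\sD_{\min}(A_T)\subseteq V_T\subseteq\sD_{\max}(A_T)$ holds for every intermediate $V$. If $V_1\cap\ker T=V_2\cap\ker T$ and $u\in V_1$, then $u-K(Tu)\in V_1\cap\ker T\subseteq V_2$ and $K(Tu)\in\sD_{\min}(\cA)\subseteq V_2$, so $u\in V_2$; the symmetric argument gives $V_1=V_2$. Conversely, given intermediate $W$ between $\sD_{\min}(A_T)$ and $\sD_{\max}(A_T)$, the space $V:=W+\sD_{\min}(\cA)$ sits between $\sD_{\min}(\cA)$ and $\sD_{\max}(\cA)$, and $V\cap\ker T=W$: any $u=w+z\in V\cap\ker T$ with $w\in W$ forces $Tz=0$, hence $z\in\sD_{\min}(\cA)\cap\ker T=\sD_{\min}(A_T)\subseteq W$ and $u\in W$. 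The main obstacle is really to secure the two crucial properties of $K$, namely that its range lies in the higher-weight space $\cH^{\mu,\gamma+\mu}_p(\D,E)$ and that it preserves $\sC^{\infty,\infty}$; both are built into the cone Boutet de Monvel construction of \cite{CSS1}, and once they are invoked everything else reduces to manipulation of quotients and intersections.
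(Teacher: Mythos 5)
Your proof is correct and follows essentially the same mechanism the paper records immediately after the lemma: the isomorphism $u+\sD_{\min}(\cA)\mapsto(u-RTu)+\sD_{\min}(A_T)$ built from a right inverse $R$ of $T$ at weight $\gamma+\mu$ (your $K$), with $1-RT$ the projection onto $\ker T$. The paper itself only cites Krainer and records this formula without detail, so your careful verification of injectivity (via approximation of $u\in\sD_{\min}(\cA)$ by $u_n\in\sC^{\infty,\infty}(\D,E)$ and correction by $K(Tu_n)$), of surjectivity, and of the lattice bijection is a valid fleshing-out of exactly that argument; the two properties of $K$ you single out at the end (regularizing to weight $\gamma+\mu$, and preserving $\sC^{\infty,\infty}$) are indeed the crux and do hold for the Poisson-type right inverse constructed in \cite{CSS1}, so nothing is missing.
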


More precisely, let $R$ be a right-inverse of $T$ as in \reftext{\eqref{eq:Tsurjective}} with $\gamma $ replaced by $\gamma +\mu $. Then
\begin{equation}
\label{eq:quotient-isom2}
u+{\mathscr{D}}_{\min }({\mathcal{A}})\mapsto (u-RTu)+{\mathscr{D}}_{\min }(A_{T})
\end{equation}
provides an isomorphism \reftext{\eqref{eq:quotient-isom}}. Note that $1-RT$ is a
projection onto the kernel of $T$, since $T(1-RT)=0$. In particular, if
${\mathscr{F}}$ is the space from \reftext{Theorem~\ref{thm:Dmax}}, then
\begin{equation}
\label{eq:complement}
{\mathscr{D}}_{\max }(A_{T})={\mathscr{D}}_{\min }(A_{T})\oplus {
\mathscr{E}},\qquad {\mathscr{E}}:=\{\omega u-RT(\omega u)\mid u\in {
\mathscr{F}}\},
\end{equation}
provides a (non-canonical) description of the maximal domain. We obtain:

\begin{theorem}
Any extension $\underline{A}_{T}$ of $A_{T}$ with
$\underline{A}_{T}\subseteq A_{T,\max }$ corresponds to a choice of a subspace
of ${\mathscr{E}}$, i.e. has a domain of the form
\begin{equation}
\label{E}
\mathscr{D}(\underline{A}_{T})=\mathscr{D}_{\min }({A}_{T})\oplus
\underline{{\mathscr{E}}}, \qquad \underline{{\mathscr{E}}}
\text{ subspace of } {\mathscr{E}}.
\end{equation}
\end{theorem}

By \reftext{\eqref{eq:F}},
${\mathscr{E}}\subset {\mathcal{H}}^{\infty ,\gamma +\varepsilon }_{p}({
\mathbb{D}},E)$ for some $\varepsilon >0$. Moreover, since
$T(\omega u)$ vanishes for small $x$,
$RT(\omega {\mathscr{F}})\subseteq {\mathcal{H}}^{\infty ,\gamma +\mu }_{p}({
\mathbb{D}},E)$. In particular,
\begin{equation*}
{\mathscr{E}}+{\mathcal{H}}^{\infty ,\gamma +\mu }_{p}({\mathbb{D}},E)=
\omega {\mathscr{F}}+{\mathcal{H}}^{\infty ,\gamma +\mu }_{p}({\mathbb{D}},E).
\end{equation*}

If ${\mathcal{A}}$ is elliptic with respect to the weight
$\gamma +\mu $, then, by \cite[Proposition 4.3]{CSS1},
\begin{equation*}
{\mathscr{D}}_{\min }(A_{T}) = {\mathcal{H}}^{\mu ,\gamma +\mu }_{p}({
\mathbb{D}})_{T}:={\mathcal{H}}^{\mu ,\gamma +\mu }_{p}({\mathbb{D}})
\cap \mathrm{ker}\,T.
\end{equation*}

\subsection{Operators on the model cone}
\label{sec3.3}

If $A$ is as in \reftext{\eqref{eq:A}} we define the differential operator
\begin{equation*}
\widehat{A}=x^{-\mu }\sum _{j=0}^{\mu }a_{j}(0)(-x\partial _{x})^{j}
\end{equation*}
by freezing the coefficients of $A$ in $x=0$. Proceeding analogously with
the operators $B_{k}$, we obtain the boundary condition
$\widehat{T}=(\gamma _{0}\widehat{B}_{0},\ldots ,\gamma _{0}
\widehat{B}_{\mu -1})$. Together they define the so-called model boundary
value problem
$\widehat{\mathcal{A}}=\binom{\widehat{A}}{\widehat{T}}$ on the infinite
cylinder $Y^{\wedge }=(0,+\infty )\times Y$. Similarly as above in Section~\ref{sec:binomAT}, we shall consider $\widehat{\mathcal{A}}$ as an unbounded
operator
\begin{align}
\label{eq:whsA-unbounded}
\widehat{\mathcal{A}}: {\mathscr{S}}^{\infty }(Y^{\wedge },E^{\wedge })
\subset {\mathcal{K}}^{0,\gamma }_{p}(Y^{\wedge },E^{\wedge })
\longrightarrow
\begin{matrix}
{\mathcal{K}}^{0,\gamma }_{p}(Y^{\wedge },E^{\wedge })
\\
\oplus
\\
\mathop{\text{$\oplus $}}\limits _{k=0}^{\mu -1} {{\mathcal{K}}}^{\mu -k-1/p,
\gamma +\mu -k-1/2}_{pp}(\partial Y^{\wedge },{F}_{k}^{\wedge })
\end{matrix};
\end{align}
again, the trace spaces on the right-hand side are Besov spaces with
$p=q$.

The discussion of the corresponding closed extensions is parallel to that
above. The domain of the closure
${\mathscr{D}}_{\min }(\widehat{\mathcal{A}})$ and the maximal domain
${\mathscr{D}}_{\max }(\widehat{\mathcal{A}})$ are contained in
${\mathcal{K}}^{\mu ,\gamma }_{p}(Y^{\wedge },E^{\wedge })$ and
\begin{equation*}
{\mathscr{D}}_{\min }(\widehat{\mathcal{A}})={\mathscr{D}}_{\max }(
\widehat{\mathcal{A}})\cap \mathop{\text{\large $\cap $}}_{
\varepsilon >0}{\mathcal{K}}^{\mu ,\gamma +\mu -\varepsilon }_{p}(Y^{\wedge },E^{\wedge }).
\end{equation*}
If ${\mathcal{A}}$ is elliptic with respect to the weight
$\gamma +\mu $, then
\begin{equation}
\label{eq:Dmin}
{\mathscr{D}}_{\min }(\widehat{\mathcal{A}})={\mathcal{K}}^{\mu ,\gamma +
\mu }_{p}(Y^{\wedge },E^{\wedge }).
\end{equation}

The following theorem combines the results of Krainer in
\cite[Section 6]{Kr} with the above representation.
\begin{theorem}%
\label{thm:Dmaxhat}
Let $S_{\gamma }$ be as in \reftext{\eqref{eq:strip}} and
$\mathbf{G}^{(\ell )}_{\sigma }$, $\mathbf{G}_{\sigma }$ be as in \reftext{\eqref{eq:gsigmal}} and \reftext{\eqref{eq:gsigma}}, respectively. Then
\begin{equation*}
{\mathscr{D}}_{\max }(\widehat{\mathcal{A}})={\mathscr{D}}_{\min }(
\widehat{\mathcal{A}})\oplus \omega \widehat{\mathscr{F}},
\end{equation*}
where
\begin{equation*}
\widehat{\mathscr{F}}=\mathop{\text{\Large $\oplus $}}_{\sigma \in S_{\gamma }} \widehat{\mathscr{F}}_{\sigma }, \qquad \widehat{\mathscr{F}}_{\sigma }:=\mathrm{range}\,\mathbf{G}_{\sigma }^{(0)}.
\end{equation*}
The mappings
\begin{equation*}
\theta _{\sigma }:{\mathscr{F}}_{\sigma }\longrightarrow
\widehat{\mathscr{F}}_{\sigma },\quad \mathbf{G}_{\sigma }u\mapsto
\mathbf{G}_{\sigma }^{(0)}u
\end{equation*}
are well-defined isomorphisms, hence induce an isomorphism
$\theta :{\mathscr{F}}\to \widehat{\mathscr{F}}$. In particular,
\begin{equation}
\label{eq:quotient-isom-hat}
{\mathscr{D}}_{\max }({\mathcal{A}})/{\mathscr{D}}_{\min }({\mathcal{A}})
\cong {\mathscr{D}}_{\max }(\widehat{\mathcal{A}})/{\mathscr{D}}_{\min }(
\widehat{\mathcal{A}}).
\end{equation}
\end{theorem}

In other words, the map $\theta $ gives rise to a bijection
$\Theta $ between the subspaces of ${\mathscr{F}}$ and those of
$\widehat{\mathscr{F}}$, i.e. an isomorphism
\begin{equation}
\label{eq:Grassmannian}
\Theta :\mathrm{Gr}({\mathscr{F}})\longrightarrow \mathrm{Gr}(
\widehat{\mathscr{F}})
\end{equation}
between the corresponding Grassmannians. This isomorphism has been first
described in \cite{GKM1} in the case of manifolds without boundary and
in \cite{Kr} in the case with boundary; the present, equivalent, construction
extends that of \cite{SS2} to the case with boundary. It induces a one-to-one
correspondence between the closed extensions of ${\mathcal{A}}$ and those
of $\widehat{\mathcal{A}}$ by
\begin{equation}
\label{eq:theta_dom}
{\mathscr{D}}(\underline{{\mathcal{A}}})={\mathscr{D}}_{\min }({
\mathcal{A}})\oplus \omega \underline{{\mathscr{F}}}\mapsto {\mathscr{D}}(
\underline{\widehat{\mathcal{A}}}):={\mathscr{D}}_{\min }(
\widehat{\mathcal{A}})\oplus \omega \Theta \underline{{\mathscr{F}}}.
\end{equation}

\begin{remark}
By construction,
$\widehat{\mathscr{F}}\subseteq \ker \widehat{\mathcal{A}}$. In fact,
\begin{align*}
\widehat{\mathcal{A}}\mathbf{G}_{\sigma }^{(0)}u(x) &=
\widehat{\mathcal{A}}\int _{|z-\sigma |=\varepsilon } x^{-z}\Pi _{\sigma }(\mathbf{f}_{0}^{-1}\,\widehat{u})(z)\,\dbar z
\\
&= \int _{|z-\sigma |=\varepsilon } x^{-z} \mathbf{f}_{0}(z) (
\mathbf{f}_{0}^{-1}\,\widehat{u})(z)\,\dbar z = 0,
\end{align*}
since $\Pi _{\sigma }$ can be omitted by the residue theorem. In particular,
$\widehat{\mathscr{F}}\subseteq \ker \widehat{T}$.
\end{remark}

Our next topic are the realizations of $\widehat{A}$ in
$\mathcal{K}^{0,\gamma }_{p}(Y^{\wedge },E^{\wedge })$ subject to the boundary
condition $\widehat{T}$, i.e., the extensions of the unbounded operator
$\widehat{A}_{\widehat{T}}$ acting like $\widehat{A}$ on the domain
\begin{align}
\label{wAT}
\{u\in {\mathscr{S}}^{\infty }(Y^{\wedge },E^{\wedge })\mid \widehat{T}u=0\}
\subseteq \mathcal{K}^{0,\gamma }_{p}(Y^{\wedge },E^{\wedge }).
\end{align}
Let ${\mathscr{D}}_{\min }(\widehat{A}_{\widehat{T}})$ be the domain of the
closure of $\widehat{A}_{\widehat{T}}$, and define the maximal extension
${\widehat{A}}_{\widehat{T},\max }$ by the action of $\widehat{A}$ on
\begin{equation*}
{\mathscr{D}}_{\max }({\widehat{A}}_{\widehat{T}})=\big \{u\in \mathcal{K}^{
\mu ,\gamma }_{p}(Y^{\wedge },E^{\wedge })\mid Au\in \mathcal{K}^{0,\gamma }_{p}(Y^{\wedge },E^{\wedge }),\;\widehat{T}u=0\big \}.
\end{equation*}
Proceeding as above, using a right-inverse $\widehat{R}$ to
$\widehat{T}$ in Boutet de Monvels calculus on the infinite cone, we find
a non-canonical decomposition
\begin{equation}
\label{eq:complement-hat}
{\mathscr{D}}_{\max }(\widehat{A}_{\widehat{T}})={\mathscr{D}}_{\min }(
\widehat{A}_{\widehat{T}})\oplus \widehat{\mathscr{E}},\qquad
\widehat{\mathscr{E}}:=\{\omega u-\widehat{R}\widehat{T}(\omega u)\mid u
\in \widehat{\mathscr{F}}\}.
\end{equation}
Since both the differential boundary condition $\widehat{T}$ and the right-inverse
$\widehat{R}$ preserve rapid decay of functions at infinity, there exists
an $\varepsilon >0$ such that
\begin{equation}
\label{eq:rapid-decay}
\widehat{{\mathscr{E}}}\subset {\mathscr{S}}^{\gamma +\varepsilon }(Y^{\wedge },E^{\wedge }).
\end{equation}

\begin{remark}
$\widehat{R}$ can be chosen such that it commutes with dilations, i.e. given
$u\in {\mathcal{K}}^{\mu ,\gamma }_{p}(\partial Y^{\wedge },\break  E^{\wedge })$ and
writing $u_{\lambda }(x,y) = u(\lambda x,y)$ for $\lambda >0$ we have
$\widehat{R}u_{\lambda }= (\widehat{R} u)_{\lambda }$. This follows from the
fact that $\widehat{T}$ commutes with dilations together with the construction
in \cite[Section 3]{CSS1}.
\end{remark}

\begin{remark}%
\label{rem:RT}
In general, multiplication by a cut-off function $\omega $ will not commute
with the trace operator $\widehat{T}$. If $\omega $ and $\widehat{T}$ commute,
$\widehat{T}(\omega u)=0$ for $u\in \widehat{\mathscr{F}}$, so that
$\widehat{\mathscr{E}}= \omega \widehat{\mathscr{F}}$. This is the case
e.g. for Dirichlet and Neumann boundary conditions.
\end{remark}

The isomorphism $\Theta $ provides a one-to-one correspondence between
the closed extensions of $A_{T}$ and $\widehat{A}_{\widehat{T}}$, which will
be of crucial importance below:

\begin{definition}%
\label{def:domain_hat_A}
Let $\underline{A}_{T}$ be a closed extension of $A_{T}$. According to \reftext{\eqref{E}} and \reftext{\eqref{eq:complement}} it has domain
$\mathscr{D}(\underline{A}_{T})=\mathscr{D}_{\min }({A}_{T})\oplus
\underline{{\mathscr{E}}}$, where
\begin{equation*}
\underline{{\mathscr{E}}}=\{\omega u-RT(\omega u)\mid u\in
\underline{{\mathscr{F}}}\},\qquad \underline{{\mathscr{F}}}\subset {
\mathscr{F}}.
\end{equation*}
Then let $\underline{\widehat{A}}_{\widehat{T}}$ be the closed extension
of $\widehat{A}_{\widehat{T}}$ defined by the domain
\begin{equation*}
\mathscr{D}(\underline{\widehat{A}}_{\widehat{T}}) =\mathscr{D}_{\min }({
\widehat{A}}_{\widehat{T}})\oplus \underline{\widehat{\mathscr{E}}},
\qquad \underline{\widehat{\mathscr{E}}}=\{\omega u-\widehat{R}
\widehat{T}(\omega u)\mid u\in \Theta \underline{{\mathscr{F}}}\}.
\end{equation*}
\end{definition}

The mapping
${\mathscr{D}}(\underline{A}_{T})\mapsto {\mathscr{D}}(
\underline{\widehat{A}}_{\widehat{T}})$ does not depend on the choice of
the right-inverses $R$ and $\widehat{R}$, respectively, but only on the
isomorphism $\Theta $ from \reftext{\eqref{eq:Grassmannian}}.

Finally let us remark that, in case of ellipticity with respect to the
weight $\gamma +\mu $,
\begin{align}
\label{eq:Dmin2}
\mathscr{D}_{\min }({\widehat{A}}_{\widehat{T}})&={\mathcal{K}}^{\mu ,
\gamma +\mu }_{p}(Y^{\wedge },E^{\wedge })_{\widehat{T}} :={\mathcal{K}}^{
\mu ,\gamma +\mu }_{p}(Y^{\wedge },E^{\wedge })\cap \mathrm{ker}\,
\widehat{T}.
\end{align}

\subsection{Invertibility and Fredholm property}
\label{sec3.4}

\begin{proposition}%
\label{prop:independent}
Consider the operator
\begin{equation*}
\underline{A}_{T}:{\mathcal{H}}^{s+\mu ,\gamma +\mu }_{p}({\mathbb{D}}, E)_{T}
\oplus \underline{{\mathscr{E}}}\longrightarrow {\mathcal{H}}^{s,
\gamma }_{p}({\mathbb{D}}, E).
\end{equation*}
If this is a Fredholm operator for some choice $s=s_{0}$, $p=p_{0}$ with
$\mu \le s_{0}\in {\mathbb{N}}_{0}$, $1<p_{0}<+\infty $, then it is a Fredholm
operator for all $1<p<+\infty $, $s\in {\mathbb{Z}}$, $s>\mu -1+1/p$. Also
the index then is independent of $s$ and $p$. Similarly, if it is invertible
for $s_{0}$, $p_{0}$, then it is invertible for all $1<p<+\infty $,
$s>\mu -1+1/p$.
\end{proposition}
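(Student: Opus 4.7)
The argument rests on the cone/edge boundary value calculus of Kapanadze--Schulze \cite{KS} recalled in the appendix. Under the standing $\D$-ellipticity with parameter, $\cA=\binom{A}{T}$ admits a parametrix $\cP$ in that calculus, continuous as
\[
\cP:\cH^{s,\gamma}_p(\D,E)\oplus\mathop{\mbox{$\oplus$}}\limits_{j=0}^{\mu-1}\cH^{s+\mu-j-1/p,\gamma+\mu-j-1/2}_{pp}(\B,F_j)\longrightarrow\cH^{s+\mu,\gamma+\mu}_p(\D,E)
\]
uniformly in every admissible pair $(s,p)$, with $\cP\cA=I-G_L$ and $\cA\cP=I-G_R$ for Green remainders $G_L,G_R$ that are compact on all these scales. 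The right-inverse $R$ of $T$ from \eqref{eq:Tsurjective} enjoys the analogous $(s,p)$-uniform continuity.

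First, I would construct a parametrix for $A$ on $\ker T$. Setting $\cQ f:=\cP(f,0)-R\,T\cP(f,0)$, the identity $TR=I$ gives $T\cQ=0$, hence $\cQ:\cH^{s,\gamma}_p(\D,E)\to\cH^{s+\mu,\gamma+\mu}_p(\D,E)_T$ continuously for every admissible $(s,p)$. A direct computation expresses $A\cQ-I$ and $\cQ A-I$ as compositions involving $G_L,G_R,A$ and $R$, hence they are compact on all admissible scales. Therefore the restriction $A_T^{\min}:\cH^{s+\mu,\gamma+\mu}_p(\D,E)_T\to\cH^{s,\gamma}_p(\D,E)$ is Fredholm, and its index is independent of $(s,p)$ because the parametrix $\cQ$ is one and the same operator across the scales.

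Next, I would pass from $A_T^{\min}$ to $\underline A_T$ via a finite-rank perturbation. The direct sum $\sD(\underline A_T)=\cH^{s+\mu,\gamma+\mu}_p(\D,E)_T\oplus\underline\sE$ makes the associated projections continuous. By the discussion following \eqref{eq:complement}, $\underline\sE\subset\cH^{\infty,\gamma+\eps}_p(\D,E)$, so $A|_{\underline\sE}$ takes values in $\cH^{\infty,\gamma}_p(\D,E)\hookrightarrow\cH^{s,\gamma}_p(\D,E)$; since $\dim\underline\sE<\infty$ this map is of finite rank. Writing $\underline A_T(u_1+u_2)=A_T^{\min}u_1+A|_{\underline\sE}u_2$ exhibits $\underline A_T$ as Fredholm plus compact, hence Fredholm with
\[
\mathrm{ind}\,\underline A_T=\mathrm{ind}\,A_T^{\min}+\dim\underline\sE,
\]
which is $(s,p)$-independent. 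For invertibility, suppose $\underline A_T$ is invertible at $(s_0,p_0)$; then the common index is zero. For $u\in\ker\underline A_T^{s,p}$ with $u=u_1+u_2$, one has $Tu_1=0$ and $Au_1=-Au_2\in\cH^{\infty,\gamma}_{p'}(\D,E)$ for every $p'$, so the identity $u_1=G_Lu_1+\cP\cA u_1$ bootstraps $u_1$ into $\cH^{\infty,\gamma+\mu}_{p_0}$. Hence $u\in\ker\underline A_T^{s_0,p_0}=\{0\}$, which together with index zero gives invertibility at $(s,p)$.

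The main technical obstacle I anticipate is verifying the precise $(s,p)$-uniform mapping and compactness properties of the Kapanadze--Schulze parametrix $\cP$ and its Green remainders on the weighted Sobolev and Besov scales appearing in the statement, and in particular ensuring that $G_L$ acts in a weight-preserving, smoothing manner on $\cH^{s+\mu,\gamma+\mu}_p$, which is the key point enabling the regularity bootstrap used in the proof of kernel triviality.
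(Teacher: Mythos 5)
Your proposal starts from the standing assumption of $\D$-ellipticity with parameter and uses it to produce a parametrix $\cP$ for $\cA$ with \emph{compact} Green remainders $G_L,G_R$, uniformly over all admissible $(s,p)$. This is the gap: $\D$-ellipticity (with or without parameter, Definitions \ref{def:elliptic} and \ref{def:parameter-elliptic}) only concerns the principal interior and boundary symbols and their rescaled versions; it says nothing about invertibility of the conormal symbol on the weight line $\Gamma_{\frac{n+1}{2}-\gamma-\mu}$. Without that additional condition, the cone-calculus parametrix only inverts $\cA$ modulo Green remainders of order zero that are \emph{not} compact on $\cH^{s,\gamma}_p$; equivalently, $\D$-ellipticity alone does not make $\cA$ Fredholm. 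Consequently, your argument proves an unconditional Fredholmness that is actually false in general, and it never actually uses the hypothesis of the proposition --- that $\underline A_T$ is Fredholm for \emph{some} $(s_0,p_0)$.

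The paper closes exactly this gap by a spectral-invariance argument: from \cite[Theorem 8.3]{CSS1} it reduces Fredholmness of $\underline A_T$ to Fredholmness of the full column operator $\cA$; then \cite[Corollary 50]{LS} is invoked to deduce, from the \emph{hypothesized} Fredholmness at one $(s_0,p_0)$, that $\cA$ is elliptic in the cone algebra (including the conormal symbol condition), hence Fredholm on the whole scale with $(s,p)$-independent index. This is the logical step your proposal lacks. Note also that the paper remarks explicitly that the standing $\D$-ellipticity assumption was not needed in the proof, whereas your argument leans on it as its starting point --- yet another sign that the dependence structure of the argument is off. The bootstrap argument for injectivity at the end of your proposal is in the right spirit and matches the paper's, though the regularity statement for $Au_2$ should read $Au_2\in\sC^{\infty,\gamma+\varepsilon}(\D)$ for some $\varepsilon>0$ (reflecting $\underline\sE\subset\cH^{\infty,\gamma+\varepsilon}_p$ and the structure of $A$ on $\sD_{\max}$), rather than $\cH^{\infty,\gamma}_{p'}$ for every $p'$. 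To repair the proof you should either replace the claimed parametrix with the spectral-invariance route of \cite{LS}, or explicitly derive the conormal ellipticity at the weight $\gamma+\mu$ from the hypothesized Fredholmness before constructing the parametrix.
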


\begin{proof}
Since $T$ is surjective and $\underline{{\mathscr{E}}}$ is finite-dimensional,
\cite[Theorem 8.3]{CSS1} implies that $\underline{A}_{T}$ is a Fredholm
operator if and only if
\begin{equation*}
{\mathcal{A}}:{\mathcal{H}}^{s+\mu ,\gamma +\mu }_{p}({\mathbb{D}},E)
\longrightarrow
\begin{matrix}
{\mathcal{H}}^{s,\gamma }_{p}({\mathbb{D}},E)
\\
\oplus
\\
\mathop{\text{$\oplus $}}\limits _{j=0}^{\mu -1} {\mathcal{H}}_{pp}^{s+
\mu -j-1/p,\gamma +\mu -j-1/2}({\mathbb{B}},F_{j})
\end{matrix}
\end{equation*}
is a Fredholm operator; in that case, their indices differ by the dimension
of $\underline{{\mathscr{E}}}$. By Corollary 50 in \cite{LS} the Fredholm
property of ${\mathcal{A}}$ implies that it is -- after normalization of
the orders of the boundary operators -- an elliptic cone pseudodifferential
operator in the sense of \cite{LS}. Hence it has a parametrix and therefore
is a Fredholm operator for all other choices of $s$ and $p$. Moreover,
the index is independent of $s$ and $p$ by \cite[Corollary 50]{LS}.

Next suppose $A_{T}$ is invertible for some fixed choice
$s_{0},p_{0}$. It follows from the first part of the proof that
$A_{T}$ is a Fredholm operator of index zero for the other values of
$s$ and $p$. Hence it will suffice to establish the injectivity of
$A_{T}$. Suppose $A(u+e)=0$ for some
$u\in {\mathcal{H}}^{s+\mu ,\gamma +\mu }_{p}({\mathbb{D}})_{T}$ and
$e\in \underline{{\mathscr{E}}}$. Then
$Au=-Ae\in {\mathscr{C}}^{\infty ,\gamma +\varepsilon }({\mathbb{D}})$ for
some $\varepsilon >0$. By elliptic regularity in the cone algebra we conclude
that
$u\in {\mathscr{C}}^{\infty ,\mu +\gamma +\varepsilon '}({\mathbb{D}})$ for
some $\varepsilon '>0$. This shows that the kernel of
$\underline{A}_{T}$ does not depend on $s$ and $p$. Hence $A_{T}$ is invertible
for all $1<p<+\infty $, $s>\mu -1+1/p$.
\end{proof}

In the above proof, our standing assumption of ${\mathbb{D}}$-ellipticity
of ${\mathcal{A}}$ with respect to $\Lambda $ was not needed. A result similar
to \reftext{Proposition~\ref{prop:independent}} holds for model cone operators.

\begin{proposition}%
\label{prop:SI_for_model_cone}
Suppose that $0\neq\lambda \in \Lambda $ and that, in addition to the
${\mathbb{D}}$-ellipticity of ${\mathcal{A}}$, the conormal symbol of
${\mathcal{A}}$ is invertible on the line
$\revtex z = \frac{n+1}{2}-\gamma $. If
\begin{equation}
\label{prop:si_modelcone}
\lambda -\underline{\widehat{A}}_{\widehat{T}} : {\mathcal{K}}^{s+\mu ,
\gamma +\mu }_{p}(Y^{\wedge }, E^{\wedge })_{\widehat{T}} \oplus
\underline{\widehat{{\mathscr{E}}}} \longrightarrow {\mathcal{K}}^{s,
\gamma }_{p}(Y^{\wedge }, E^{\wedge })
\end{equation}
is invertible for some choice of $s$ and $p$, $s\in {\mathbb{Z}}$,
$s>\mu -1+1/p$, $1<p<\infty $, then it is invertible for all other choices.
\end{proposition}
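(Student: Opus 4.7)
The plan is to adapt the argument of Proposition~\ref{prop:independent} to the model cone $Y^\wedge$, with the automatic $\D$-ellipticity of $\cA$ replaced by ellipticity of $\gl-\wh\cA$ in a boundary value cone calculus on $Y^\wedge$ that controls behavior both at the tip $x=0$ and at the exit $x\to+\infty$, as provided by \cite{KS}.

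First, I would verify that $\gl-\wh\cA$ is elliptic with respect to the weight $\gamma$ on $Y^\wedge$. The $\D$-ellipticity of $\cA$ with parameter in $\gL$ yields invertibility of $\bgs_\psi^\mu(\wh\cA)-\gl$, $\widetilde\bgs_\psi^\mu(\wh\cA)-\gl$, and of the corresponding principal and rescaled boundary symbols for every $\gl\in\gL$, hence interior ellipticity everywhere on the cylinder. Since $\gl\neq 0$ and $\wh\cA$ has $x$-independent cone coefficients, $\gl$ provides exit-ellipticity at $x\to+\infty$ in the usual SG-type sense. Finally, the additional conormal hypothesis — invertibility of $\bgs_M^\mu(\cA)(z)$ on the line $\re z=(n+1)/2-\gamma$ — supplies the Mellin condition needed for ellipticity relative to the weight $\gamma$ (note $\gl x^\mu\to 0$ at the tip, so the conormal symbol of $\gl-\wh\cA$ agrees with that of $-\wh\cA$).

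Second, I would invoke the model-cone analog of \cite[Theorem 8.3]{CSS1}: surjectivity of $\wh T$ with right inverse $\wh R$ and finite-dimensionality of $\underline{\wh\sE}$ imply that $\gl-\underline{\wh A}_{\wh T}$ is Fredholm if and only if $\gl-\wh\cA$ is Fredholm, the indices differing by $\dim\underline{\wh\sE}$. The ellipticity of Step~1 produces a parametrix for $\gl-\wh\cA$ in the calculus (the model-cone version of \cite[Corollary 50]{LS}); hence the Fredholm property and the index of $\gl-\wh\cA$, and so of $\gl-\underline{\wh A}_{\wh T}$, are independent of $s\in\Z$ with $s>\mu-1+1/p$ and $1<p<+\infty$. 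Invertibility at some $(s_0,p_0)$ therefore forces the index to be zero at every $(s,p)$.

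Third, to upgrade from injectivity at $(s_0,p_0)$ to injectivity at arbitrary $(s,p)$, suppose $(u+e)\in\cK^{s+\mu,\gamma+\mu}_p(Y^\wedge,E^\wedge)_{\wh T}\oplus\underline{\wh\sE}$ satisfies $(\gl-\wh A)(u+e)=0$. Then $(\gl-\wh A)u=-(\gl-\wh A)e$, and by \eqref{eq:rapid-decay} the right-hand side lies in $\sS^{\gamma+\eps}(Y^\wedge,E^\wedge)$. Applying the parametrix of Step~2 (elliptic regularity) upgrades $u$ to a section in $\sS^{\gamma+\mu+\eps'}(Y^\wedge,E^\wedge)_{\wh T}$ for some $\eps'>0$, which embeds into $\cK^{s_0+\mu,\gamma+\mu}_{p_0}(Y^\wedge,E^\wedge)_{\wh T}$; thus $u+e$ lies in the kernel at $(s_0,p_0)$, which is trivial, and $u+e=0$. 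Combined with the index-zero Fredholm property, this gives invertibility at every $(s,p)$. The principal obstacle is ensuring that the boundary value cone calculus on $Y^\wedge$ is sharp enough to deliver a parametrix and elliptic regularity on every $\cK^{s,\gamma}_p$-space simultaneously, uniformly in both the tip and the exit direction; this is where the precise structure of \cite{KS} and the $\gl$-dependence at infinity are used in an essential way.
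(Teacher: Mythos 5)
Your plan follows the same three-step skeleton as the paper's argument (Fredholm property for all $(s,p)$, kernel independence via elliptic regularity, index zero hence invertible), and your Step~3 correctly mirrors the paper's kernel-independence computation. But there is a genuine gap in Step~2: you assert that the existence of a parametrix makes ``the Fredholm property \emph{and the index}'' independent of $s$ and $p$, and you never justify the index part. A parametrix controls the kernel directly (elliptic regularity pulls it into $\sS^{\gamma+\mu+\eps'}(Y^\wedge,E^\wedge)$), but the cokernel is a quotient, not a subspace, and you cannot conclude its dimension is $(s,p)$-independent without passing to the adjoint. This is precisely what the paper does, and it is where the extra hypothesis ``conormal symbol invertible on $\re z=\frac{n+1}{2}-\gamma$'' actually earns its keep: via \cite[Corollary 7.3]{CSS1} it guarantees that the formal adjoint $\wh A^t$ with the adjoint boundary condition $\wh T'$ is again $\D$-elliptic, so that (an analog of \cite[Theorem 4.6]{CSS1}) identifies $(\lambda-\underline{\wh A}_{\wh T})^*$ as $\overline\lambda-\underline{\wh A^t}_{\wh T'}$ with a domain of the same $\cK^{s+\mu,\gamma+\mu}_{p'}(Y^\wedge,E^\wedge)_{\wh T'}\oplus\underline{\wh\sE}^t$ type, and the kernel-independence argument can be repeated on the adjoint side. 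Only then do you get cokernel constancy, hence index constancy, hence index zero everywhere.

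Relatedly, your placement of the conormal hypothesis in Step~1 as ``the Mellin condition for ellipticity relative to the weight $\gamma$'' is off. For the operator $\gl-\wh\cA:\cK^{s+\mu,\gamma+\mu}_p\to\cK^{s,\gamma}_p\oplus\cdots$ to be elliptic in the cone calculus, what is needed is conormal invertibility on the line matching the \emph{source} weight, $\re z=\frac{n+1}{2}-\gamma-\mu$; the line $\re z=\frac{n+1}{2}-\gamma$ is extraneous to the primal parametrix construction and is there for the adjoint. So the conormal hypothesis you invoke in Step~1 does not supply what you claim, and the place where it is genuinely needed — the duality step — is the very place your plan is silent. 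To repair the proof, replace the sentence ``hence the Fredholm property and the index \ldots are independent'' with an explicit adjoint argument along the lines above.
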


\begin{proof}
The ${\mathbb{D}}$-ellipticity together with the fact that
$\lambda \neq0$ implies that
\begin{align*}
\binom{\lambda -\widehat{A}}{\widehat{T}}:{\mathcal{K}}^{s+\mu ,\gamma +
\mu }_{p}(Y^{\wedge }, E^{\wedge }) \longrightarrow
\begin{matrix}
{\mathcal{K}}^{s,\gamma }_{p}(Y^{\wedge }, E^{\wedge })
\\
\oplus
\\
\mathop{\text{$\oplus $}}\limits _{j=0}^{\mu -1} {\mathcal{K}}_{pp}^{s+
\mu -j-1/p,\mu -j-1/2}((\partial Y)^{\wedge },F_{j}^{\wedge })
\end{matrix}
\end{align*}
is a Fredholm operator for all above choices of $p$ and $s$: This follows
from Theorem 6.2.19 in \cite{HS}, since, after normalization of the orders
of the boundary operators, $\binom{\lambda -\widehat{A}}{\widehat{T}}$ is
an elliptic element in the cone algebra on the infinite cone.

Since $\widehat{T}$ is surjective, we obtain the Fredholm property for
\begin{equation*}
\lambda - \widehat{A}_{\widehat{T}}: {\mathcal{K}}^{s+\mu ,\gamma +\mu }_{p}(Y^{\wedge }, E^{\wedge })_{\widehat{T}} \longrightarrow {\mathcal{K}}^{s,
\gamma }_{p}(Y^{\wedge }, E^{\wedge })
\end{equation*}
and thus for the extension in \reftext{\eqref{prop:si_modelcone}}. The kernel of
this extension is actually independent of $s$ and $p$: Suppose
$(\lambda -\underline{\widehat{A}}_{\widehat{T}})(u+e)=0$ for some
$u\in {\mathcal{K}}^{s+\mu ,\gamma +\mu }_{p}(Y^{\wedge }, E^{\wedge })_{
\widehat{T}}$ and $e\in \widehat{{\mathscr{E}}}$. Then
$(\lambda - \widehat{A})u = (\lambda -\widehat{A}) e \in {\mathscr{S}}^{
\gamma +\varepsilon }(Y^{\wedge }, E^{\wedge })$ for sufficiently small
$\varepsilon >0$, so that, by elliptic regularity,
$u \in {\mathscr{S}}^{\gamma +\mu +\varepsilon '}(Y^{\wedge }, E^{\wedge })$
for some $\varepsilon >0$, which is a common subset of all domains, independent
of $s$ and $p$.

Furthermore, the invertibility of the conormal symbol on
$\revtex z = \frac{n+1}{2}-\gamma $ implies that the formal adjoint
$\widehat{A}^{t}$ of $\widehat{A}$ with the adjoint boundary condition
$\widehat{T}'$ is also ${\mathbb{D}}$-elliptic according to
\cite[Corollary 7.3]{CSS1}. An analog of \cite[Theorem 4.6]{CSS1} shows
that the adjoint of $\lambda -\underline{\widehat{A}}_{\widehat{T}}$ is
$\overline{\lambda }-\underline{\widehat{A}}^{t}_{\widehat{T}'}$ acting
on a domain of the form
${\mathcal{K}}^{s+\mu ,\gamma +\mu }_{p}(Y^{\wedge }, E^{\wedge })_{
\widehat{T}'}\oplus \underline{\widehat{{\mathscr{E}}}}^{t}$ for a suitable
$\underline{\widehat{{\mathscr{E}}}}^{t}$. With the same argument as above,
its kernel also is independent of $s$ and $p$.

Hence the index of $\lambda -\underline{\widehat{A}}_{\widehat{T}}$ is always
zero, as this is the case for the choice of $s$ and $p$, where it is invertible.
Moreover, since the kernel dimension is also constant, it must be zero.
Therefore $\lambda -\underline{\widehat{A}}_{\widehat{T}}$ is invertible
for all $s$ and $p$.
\end{proof}

\section{Parameter-dependent Green operators}
\label{sect:4}

Green symbols are parameter-dependent families of integral operators on
the model cone with smooth kernels that depend in a specific way on the
covariable $\eta $. We will show that they can be characterized by their
mapping properties, a result that will be needed in the proof of \reftext{Theorem~\ref{Resolvent}}. In the sequel, $[\cdot ]$ denotes a smooth function on
${\mathbb{R}}^{n}$ with $[\eta ] \equiv 1$ near zero and
$[\eta ]= |\eta |$ for $|\eta |\ge 1$.

\begin{definition}%
\label{def:RnuG}
Let $\nu ,\gamma _{0},\gamma _{1}\in \mathbb{R}$. Then
$\mathcal{R}_{G}^{\nu ,0}(Y^{\wedge },\Sigma ;\gamma _{0},\gamma _{1})$
consists of all operator families $a(\eta )$ of the form
\begin{equation*}
(a(\eta )u)(x,y)=[\eta ]^{n+1}\int _{0}^{\infty }\int _{Y}k_{a}(\eta ,x[
\eta ],y,s[\eta ],t)u(s,t)s^{n}dsdt
\end{equation*}
with integral kernel satisfying, for some $\varepsilon >0$,
\begin{equation}
\label{eq:kernel}
k_{a}(\eta ,x,y,s,t)\in S^{\nu }(\Sigma ,\mathscr{S}^{\gamma _{1}+
\varepsilon }(Y^{\wedge },E^{\wedge })\widehat{\otimes }_{\pi }
\mathscr{S}^{-\gamma _{0}+\varepsilon }(Y^{\wedge },E^{\wedge })).
\end{equation}
\end{definition}

For better readability, we do not mention the vector bundle
$E^{\wedge }$ in the notation. In \reftext{\eqref{eq:kernel}},
$\widehat{\otimes }_{\pi }$ denotes the completed projective tensor product
of Fr\'{e}chet spaces. 

For a Fr\'{e}chet space $F$,
$S^{\nu }(\Sigma ,F)$ denotes the space of $F$-valued symbols of order
$\nu $ on $\Sigma $, i.e. the smooth functions $a:\Sigma \to F$ such that,
for every multi-index $\alpha $ and every continuous semi-norm $q$ on
$F$, there exists a constant $C_{\alpha ,q}$ with
\begin{equation*}
q(D^{\alpha }_{\eta }a(\eta )) \le C_{\alpha ,q} [\eta ]^{\nu -|\alpha |},
\qquad \eta \in \Sigma .
\end{equation*}

While $\nu $ in ${\mathcal{R}}_{G}^{\nu ,0}$ has the interpretation of the
order of symbols, the parameter $0$ refers to the class or type of singular
Green operators in Boutet de Monvel's algebra.

Green symbols behave naturally under composition: If
$a_{j}\in {\mathcal{R}}_{G}^{\nu _{j},0}(Y^{\wedge },\Sigma ;\gamma _{j},
\gamma _{j+1})$, $j=0,1$, then
$a_{1}a_{0}\in {\mathcal{R}}_{G}^{\nu _{0}+\nu _{1},0}(Y^{\wedge },
\Sigma ;\gamma _{0},\gamma _{2})$.

If $\phi \in {\mathscr{C}}^{\infty }_{c}(Y^{\wedge })$ and
$a\in \mathcal{R}_{G}^{\nu ,0}(Y^{\wedge },\Sigma ;\gamma _{0},\gamma _{1})$,
then both $a\phi $ and $\phi a$ belong to
$\mathcal{R}_{G}^{-\infty ,0}(Y^{\wedge },\Sigma ;\break \gamma _{0},\gamma _{1})$,
i.e., are rapidly decreasing in the parameter.

For further details on Green symbols see also Schrohe, Schulze
\cite{SS_3}.

\begin{definition}%
\label{CGD}
The space
${\mathcal{C}}^{\nu ,0}_{G}({\mathbb{D}},\Sigma ;\gamma _{0},\gamma _{1})$
of parameter-dependent Green operators of order $\nu $ and class zero on
${\mathbb{D}}$ consists of all operator families $g$ of the form
\begin{equation*}
g(\eta )=\omega _{1}\,a(\eta )\,\omega _{0}+r(\eta ),
\end{equation*}
where $\omega _{0},\omega _{1}$ are cut-off functions,
$a\in {\mathcal{R}}^{\nu }_{G}(\Sigma ;\gamma _{0},\gamma _{1})$, and
$r$ has an integral kernel which, for some
$\varepsilon =\varepsilon (g)>0$, belongs to
\begin{equation}
\label{eq:ker_r}
{\mathscr{S}}(\Sigma ,{\mathscr{C}}^{\infty ,\gamma _{1}+\varepsilon }({
\mathbb{D}},E)\widehat{\otimes }_{\pi }\, {\mathscr{C}}^{\infty ,-\gamma _{0}+
\varepsilon }({\mathbb{D}},E)).
\end{equation}
\end{definition}

In the representation of $g$ above, the cut-off functions can be changed
at the cost of substituting $r$ by another element of the same structure.
Composition behaves as above, i.e., if
$g_{j}\in {\mathcal{C}}^{\nu _{j},0}_{G}({\mathbb{D}},\Sigma ;\gamma _{j},
\gamma _{j+1})$, $j=0,1$, then
$g_{1}g_{0}\in {\mathcal{C}}^{\nu _{1}+\nu _{2},0}_{G}({\mathbb{D}},
\Sigma ;\gamma _{0},\gamma _{2})$.

For later reference let us also note the following:

\begin{remark}%
\label{rem:change_omega}
If
$g\in {\mathcal{C}}_{G}^{\nu ,0}(\mathbb{D},\Sigma ;\gamma _{0},
\gamma _{1})$ and $\omega $ is a cut-off function, then
$\omega g \omega \in \mathcal{R}_{G}^{\nu ,0}(Y^{\wedge },\Sigma ;
\gamma _{0},\break \gamma _{1})$. Moreover, $(1-\omega ) g$ and
$g(1-\omega )$ belong to
${\mathcal{C}}^{-\infty ,0}_{G}({\mathbb{D}},\Sigma ;\gamma _{0},
\gamma _{1})$ and therefore have an integral kernel \reftext{\eqref{eq:ker_r}}.
\end{remark}

We now come to the characterization of Green symbols in terms of mapping
properties and symbol estimates rather than through the structure of their
integral kernels. To this end we briefly recall the concept of operator-valued
pseudodifferential symbols in spaces with group action.

A group action on a Banach space $X$ is a strongly continuous map
$\kappa :{\mathbb{R}}_{+}\to {\mathscr{L}}(X)$ with
$\kappa _{\lambda }\kappa _{\rho }=\kappa _{\lambda \rho }$ for every
$\lambda ,\rho >0$ and $\kappa _{1}=1$. All function and distribution spaces
over $Y^{\wedge }$ appearing in this paper will have the same group action,
defined by
\begin{equation}
\label{eq:standard-kappa}
\kappa _{\lambda }u(x,y) = \lambda ^{(n+1)/2} \, u(\lambda x,y);
\end{equation}
the factor $\lambda ^{(n+1)/2}$ makes this a group of unitary operators
in ${\mathcal{K}}^{0,0}_{2}(Y^{\wedge },E^{\wedge })$, which is the
$L^{2}$ space with respect to the cone-degenerate metric
$dx^{2}+x^{2}h(0)$.

Given two Banach spaces $X^{0},X^{1}$ with respective group actions
$\kappa ^{0},\kappa ^{1}$, we denote by
$S^{\nu }(\Sigma ;X^{0},X^{1})$ the space of all smooth
$a:\Sigma \to {\mathscr{L}}(X^{0},X^{1})$ such that
\begin{equation*}
\|{\kappa ^{1}_{1/[\eta ] }}D_{\eta }^{\alpha }a(\eta ) \kappa ^{0}_{[
\eta ]} \|_{{\mathscr{L}}(X^{0},X^{1})}\le C_{\alpha } [\eta ]^{\nu -|
\alpha |},\quad \eta \in \Sigma ,
\end{equation*}
with some constants $C_{\alpha }$. If $X^{1}$ is the projective limit of
Banach spaces
$X^{1}_{0}\supset X^{1}_{1}\supset X^{1}_{2}\supset \ldots $ and the restriction
of the group action in $X^{1}_{0}$ yields a group action of the other spaces,
we set
\begin{equation*}
S^{\nu }(\Sigma ;X^{0},X^{1})=\mathop{\text{\Large $\cap $}}_{j\in {
\mathbb{N}}} S^{\nu }(\Sigma ;X^{0},X^{1}_{j}).
\end{equation*}
In both cases, $S^{\nu }(\Sigma ;X^{0},X^{1})$ is a Fr\'{e}chet space in
a natural way.

A smooth function
$a:\Sigma \setminus \{0\}\to {\mathscr{L}}(X^{0},X^{1})$ is called homogeneous
of degree $\nu $, if
\begin{equation*}
\kappa _{1/\rho }^{1} a(\rho \eta ) \kappa ^{0}_{\rho }= \rho ^{\nu }a(
\eta ), \quad \eta \neq0\quad \rho >0.
\end{equation*}
Similarly,
$a\in {\mathscr{C}}^{\infty }(\Sigma ,{\mathscr{L}}(X^{0},X^{1}))$ is called
homogeneous of degree $\nu $ for large $|\eta |$, if there exists an
$R>0$ such that the above relation holds for $|\eta |\ge R$ and
$\rho \ge 1$. In this case $a$ belongs to
$S^{\nu }(\Sigma ;X^{0},X^{1})$.

\begin{proposition}%
\label{prop:green}
The following two properties are equivalent\emph{:}
\begin{enumerate}
\item[(1)]
$a\in \mathcal{R}^{\nu ,0}_{G}(Y^{\wedge }, \Sigma ;\gamma _{0},\gamma _{1})$
\item[(2)]
\label{eq:aa*}%
There exists an $\varepsilon >0$ such that
$a\in S^{\nu }(\Sigma ;{\mathcal{K}}^{0,\gamma _{0}}_{2}(Y^{\wedge },E^{\wedge }),{\mathscr{S}}^{\gamma _{1}+\varepsilon }(Y^{\wedge },E^{\wedge }))$
and
$a^{*}\in S^{\nu }(\Sigma ;{\mathcal{K}}^{0,-{\gamma _{1}}}_{2}(Y^{\wedge },E^{\wedge }), {\mathscr{S}}^{-\gamma _{0}+\varepsilon }(Y^{\wedge },E^{\wedge }))$.
\end{enumerate}
In $(2)$, the pointwise adjoint refers to the pairings induced by the inner
product of ${\mathcal{K}}^{0,0}_{2}(Y^{\wedge }, E^{\wedge })$. The group action
is given by \reftext{\eqref{eq:standard-kappa}}.
\end{proposition}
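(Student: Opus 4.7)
The plan is to establish the equivalence by exploiting the explicit form of the group action \eqref{eq:standard-kappa} to translate between the integral-kernel representation in Definition \ref{def:RnuG} and the twisted symbol estimates. The direction $(1)\Rightarrow(2)$ is essentially a computation, while the converse $(2)\Rightarrow(1)$ relies on a kernel theorem for nuclear Fr\'echet spaces.

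For $(1)\Rightarrow(2)$, I would start from the representation of $a(\eta)$ in Definition \ref{def:RnuG} and compute the twisted family $\kappa^1_{1/[\eta]}a(\eta)\kappa^0_{[\eta]}$. Substituting $s\mapsto s/[\eta]$ in the integral and collecting the three powers of $[\eta]$ (the prefactor $[\eta]^{n+1}$, the factor $[\eta]^{(n+1)/2}$ coming from $\kappa^0_{[\eta]}$, and $[\eta]^{-(n+1)/2}$ coming from $\kappa^1_{1/[\eta]}$, together with the Jacobian $[\eta]^{-n-1}$), I expect that the twisted operator becomes simply the integral operator with kernel $k_a(\eta,x,y,s,t)$, without any rescaling of $(x,s)$. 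Since, by assumption, $k_a\in S^\nu(\gS,\sS^{\gg_1+\gve}(Y^\wedge,E^\wedge)\widehat\otimes_\pi\sS^{-\gg_0+\gve}(Y^\wedge,E^\wedge))$, differentiating in $\eta$ preserves this structure up to a loss of order. The basic estimate that integral operators with kernel in a projective tensor product $F_1\widehat\otimes_\pi F_2$ map $F_2'\hookleftarrow \cK^{0,\gg_0}_2(Y^\wedge,E^\wedge)$ continuously into $F_1$ then yields the symbol estimate in $S^\nu(\gS;\cK^{0,\gg_0}_2,\sS^{\gg_1+\gve})$. The same computation for the pointwise $\cK^{0,0}_2$-adjoint $a^*(\eta)$ shows that its kernel is (up to conjugation and transposition of variables) again an element of the analogous tensor-product symbol space, yielding the symbol estimate for $a^*$.

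For $(2)\Rightarrow(1)$, I would reverse the change of variables: define $\widetilde{k}_a(\eta,x,y,s,t)$ to be $[\eta]^{-(n+1)}$ times the Schwartz kernel of $a(\eta)$, with arguments evaluated at $(x/[\eta],y,s/[\eta],t)$. The hypothesis $a\in S^\nu(\gS;\cK^{0,\gg_0}_2,\sS^{\gg_1+\gve})$ means that, after applying $\kappa^1_{1/[\eta]}\cdot\kappa^0_{[\eta]}$ and dividing by $[\eta]^\nu$, the resulting family is bounded from $\cK^{0,\gg_0}_2(Y^\wedge,E^\wedge)$ into the nuclear Fr\'echet space $\sS^{\gg_1+\gve}(Y^\wedge,E^\wedge)$. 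By the kernel theorem for nuclear Fr\'echet spaces this bounded map admits a kernel in $\sS^{\gg_1+\gve}(Y^\wedge,E^\wedge)\widehat\otimes_\pi\cK^{0,-\gg_0}_2(Y^\wedge,E^\wedge)$. The hypothesis on the adjoint improves the second tensor factor: since $a^*(\eta)$ analogously maps into $\sS^{-\gg_0+\gve}(Y^\wedge,E^\wedge)$, the transposed kernel lies in $\sS^{-\gg_0+\gve}\widehat\otimes_\pi\cK^{0,\gg_1}_2$, which combined with the first estimate upgrades the full kernel to $\sS^{\gg_1+\gve}\widehat\otimes_\pi\sS^{-\gg_0+\gve}$ (possibly after shrinking $\gve$). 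The symbol estimates on $a$ translate, via continuity of $\eta$-differentiation in the nuclear topology, into symbol estimates of order $\nu$ for $\widetilde{k}_a$ with values in this tensor product, which is precisely \eqref{eq:kernel}.

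The main obstacle is the nuclearity/kernel-theorem step in $(2)\Rightarrow(1)$: to carry it out cleanly one must verify that the pointwise kernel belongs to the projective tensor product uniformly in $\eta$, and that each $\eta$-derivative remains in the same space with the correct order $\nu-|\alpha|$. This requires a careful choice of fundamental systems of semi-norms on $\sS^{\gg_1+\gve}$ and $\sS^{-\gg_0+\gve}$, and it is the combination of the mapping property of $a$ with that of $a^*$ that is essential to pick up the full rapid decay in \emph{both} tensor factors rather than in only one. Once the two mapping properties are merged via nuclearity, the remaining bookkeeping of the $[\eta]$-scalings is straightforward.
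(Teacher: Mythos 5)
Your plan is essentially the approach the paper intends: the paper gives no direct proof but refers to the boundaryless analogue, Proposition~4.6 in \cite{SS2}, which is proved along exactly these lines (the group action undoes the $[\eta]$-rescaling in the kernel, as your power count confirms, and the converse combines a kernel theorem for the nuclear Fr\'echet spaces $\sS^{\gamma_1+\varepsilon}$, $\sS^{-\gamma_0+\varepsilon}$ with the adjoint hypothesis to recover rapid decay in both tensor factors). You have also correctly identified the one genuinely delicate step — upgrading the two one-sided memberships $\sS^{\gamma_1+\varepsilon}\widehat\otimes_\pi\cK^{0,-\gamma_0}_2$ and $\cK^{0,\gamma_1}_2\widehat\otimes_\pi\sS^{-\gamma_0+\varepsilon}$ to the joint membership $\sS^{\gamma_1+\varepsilon'}\widehat\otimes_\pi\sS^{-\gamma_0+\varepsilon'}$ — which is precisely why the adjoint condition must be assumed.
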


The proof is analogous to that of \cite[Proposition 4.6]{SS2} for the case
without boundary.

This class differs from the class
$R^{\nu ,0}_{G}(Y^{\wedge },\Sigma ;\gamma _{0},\gamma _{1})$ introduced
in Section~\ref{app:cone}, where we require the symbols to be classical.
The same applies to
${\mathcal{C}}^{\nu ,0}_{G}({\mathbb{D}},\Sigma ;\gamma _{0},\gamma _{1})$
in \reftext{Definition~\ref{CGD}} and
$C^{\nu ,0}_{G}({\mathbb{D}},\Sigma ;\gamma _{0},\gamma _{1})$ in \reftext{Definition~\ref{CGD2}}.

\section{Bounded $H_{\infty }$-calculus for parameter-elliptic realizations}
\label{sec:hinfty}

\subsection{Assumptions}
\label{sec5.1}

In the sequel we fix $\gamma \in {\mathbb{R}}$, $1<p<+\infty $, and a function
$\tilde{x}:\mathbb{D}\rightarrow \mathbb{R}$, equal to $x$ on
$[0,1)\times Y$ and strictly positive otherwise. We assume that
${\mathcal{A}}=\binom{A}{T}$ satisfies the following conditions. As before
$\Lambda $ denotes the sector \reftext{\eqref{Lambda}}.
\begin{itemize}
\item[(E1)] ${\mathcal{A}}$ is ${\mathbb{D}}$-elliptic with parameter in
the sense of \reftext{Definition~\ref{def:parameter-elliptic}}.
\item[(E$2_{\gamma }$)] The conormal symbol
$\pmb{\sigma }_{M}^{\mu }\binom{A}{T}(z)$ is invertible whenever
$\revtex z=\frac{n+1}{2}-\gamma -\mu $ or
$\revtex z = \frac{n+1}{2}-\gamma $.
\end{itemize}
We now consider a closed extension $\underline{A}_{T}$ of $A$ with domain
\begin{equation*}
{\mathscr{D}}(\underline{A}_{T}) = {\mathcal{H}}^{\mu ,\gamma +\mu }_{p}({
\mathbb{D}},E)_{T} \oplus \underline{{\mathscr{E}}}.
\end{equation*}

In order to simplify the analysis, we conjugate by
$\tilde{x}^{\gamma }$ and work with the weight $\gamma =0$, i.e. we define
the operator $A_{0} = \tilde{x}^{-\gamma }A\tilde{x}^{\gamma }$ and the boundary
condition $T_{0} = \tilde{x}^{-\gamma } T \tilde{x}^{\gamma }$ and study the
closed extension $\underline{A}_{0;T_{0}}$ of $A_{0}$ with domain
\begin{equation*}
{\mathscr{D}}(\underline{A}_{0;T_{0}}) = {\mathcal{H}}^{\mu ,\mu }_{p}({
\mathbb{D}},E)_{T_{0}} \oplus \tilde{x}^{-\gamma }
\underline{{\mathscr{E}}}.
\end{equation*}
The above conditions (E1) and (E$2_{\gamma }$) then take the form
\begin{itemize}
\item[(E1)] ${\mathcal{A}}_{0}$ is ${\mathbb{D}}$-elliptic with parameter
in the sense of \reftext{Definition~\ref{def:parameter-elliptic}}.
\item[(E2)] The conormal symbol
$\pmb{\sigma }_{M}^{\mu }\binom{A_{0}}{T_{0}}(z)$ is invertible whenever
$\revtex z =\frac{n+1}{2}-\mu $ or $\revtex z = \frac{n+1}{2}$.
\end{itemize}
We shall also consider the corresponding extension
$\underline{\widehat{A}}_{0,\widehat{T}_{0}}$ of the model cone operator
for $p=2$, i.e.
\begin{equation*}
{\mathscr{D}}(\underline{\widehat{A}}_{0,\widehat{T}_{0}}) = {\mathcal{K}}^{
\mu ,\mu }_{2}(Y^{\wedge }, E^{\wedge })_{\widehat{T}_{0}} \oplus x^{-
\gamma }\underline{\widehat{{\mathscr{E}}}},
\end{equation*}
cf. \reftext{Definition~\ref{def:domain_hat_A}}. We require that
\begin{itemize}
\item[(E3)] There exist $C, R\ge 0$ such that
$(\lambda -\underline{\widehat{A}}_{0,\widehat{T}_{0}})$ is invertible
for $\lambda $ in $\Lambda $, $|\lambda |\ge R$, and
\begin{equation*}
\|\lambda (\lambda - \underline{\widehat{A}}_{0,\widehat{T}_{0}})^{-1}
\|_{\mathcal{L}( \mathcal{K}^{0,0}_{2}(Y^{\wedge },E^{\wedge }))} \le C.
\end{equation*}
\end{itemize}

We state the condition in (E3) with $p=2$, because we will work with Hilbert
space adjoints in the proof. Also the fact that we require the invertibility
of the conormal symbol for $\revtex z =\frac{n+1}{2}$ is a consequence
of this technique.

\subsection{Special boundary conditions}
\label{sec5.2}

When working with a concrete realization $A_{T}$ of a boundary value problem,
it can be inconvenient to make the transition to $A_{0,T_{0}}$. In case
multiplication with a cut-off function $\omega $ commutes with the boundary
operator $T$ as it is the case e.g. for Dirichlet and Neumann boundary
conditions, this can be avoided. In order show this, we shall use a variant
of the ${\mathcal{K}}^{s,\gamma }_{p}$-spaces with weights at infinity: For
$\rho \in {\mathbb{R}}$ let
\begin{equation*}
{\mathcal{K}}^{s,\gamma }_{p}(Y^{\wedge }, E^{\wedge })^{\rho }= [x]^{-\rho } {
\mathcal{K}}^{s,\gamma }_{p}(Y^{\wedge },E^{\wedge }).
\end{equation*}

\begin{proposition}%
\label{prop:weight}
Suppose the boundary condition $\widehat{T}$ commutes with cut-off functions
$\omega $. Then
\begin{equation}
\label{eq:weight.1}
\lambda - {\widehat{A}}_{\widehat{T}}: {\mathcal{K}}^{\mu ,\gamma +\mu }_{2}(Y^{\wedge }, E^{\wedge })^{\rho }_{\widehat{T}}\oplus
\underline{\widehat{{\mathscr{E}}}} \longrightarrow {\mathcal{K}}_{2}^{0,
\gamma } (Y^{\wedge }, E^{\wedge })^{\rho
}\end{equation}
is invertible for any weight $\rho $ and $\lambda \in \Lambda $,
$|\lambda |$ sufficiently large, if and only it is invertible for
$\rho =0$ and $\lambda \in \Lambda $ sufficiently large. Moreover, if the
operator norm is $O(\lambda ^{-1})$ for some fixed $\rho $, then this is
the case for all $\rho $.
\end{proposition}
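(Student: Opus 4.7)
The plan is to reduce the $\rho$-weighted invertibility to the unweighted case $\rho=0$ by conjugation with $[x]^\rho$, and then to establish invertibility of the conjugated operator via a Neumann-series argument based on the parameter-elliptic resolvent bound.

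First, because $\widehat T$ commutes with cut-off functions, it commutes with every smooth function of $x$ alone, in particular with $[x]^\rho$. Multiplication by $[x]^\rho$ is, by definition, an isometric isomorphism $\cK^{s,\gamma}_2(Y^\wedge,E^\wedge)^\rho \to \cK^{s,\gamma}_2(Y^\wedge,E^\wedge)$ and it preserves $\ker\widehat T$. By Remark \ref{rem:RT}, in our situation $\underline{\widehat\sE}=\omega\,\Theta\underline\sF$; choosing the cut-off $\omega$ with support in a neighborhood of $0$ on which $[x]\equiv 1$, multiplication by $[x]^\rho$ acts as the identity on $\underline{\widehat\sE}$. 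Thus $[x]^\rho$ induces an isomorphism of the $\rho$-weighted domain onto the unweighted one, and the invertibility of \eqref{eq:weight.1} is equivalent to the invertibility on $\cK^{0,\gamma}_2(Y^\wedge,E^\wedge)$ of
\[
[x]^\rho(\lambda-\widehat A_{\widehat T})[x]^{-\rho} \;=\; \lambda - \widehat A_{\widehat T} - \widehat R_\rho,\qquad \widehat R_\rho := [x]^\rho\widehat A[x]^{-\rho}-\widehat A.
\]

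A direct calculation using $[x]^\rho(-x\partial_x)[x]^{-\rho}=-x\partial_x+\psi_\rho$, where $\psi_\rho(x):=\rho\,x[x]^{-1}[x]'$ is a smooth bounded function vanishing near $x=0$ and equal to $\rho$ for $x\ge 1$, yields
\[
\widehat R_\rho \;=\; x^{-\mu}\sum_{j=1}^{\mu} a_j(0)\Big[(-x\partial_x+\psi_\rho)^j-(-x\partial_x)^j\Big],
\]
a differential operator of order at most $\mu-1$ in $-x\partial_x$ with coefficients supported in $\{x\gtrsim 1\}$. In particular $\widehat R_\rho:\cK^{\mu-1,\gamma+\mu-1}_2\to\cK^{0,\gamma}_2$ is bounded uniformly in $\lambda$.

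The main step, and also the principal obstacle, is to show that for large $|\lambda|\in\Lambda$ the product $\widehat R_\rho(\lambda-\widehat A_{\widehat T})^{-1}$ has small norm on $\cK^{0,\gamma}_2$. Assumption (E3), together with the $\mu$ derivatives gained by the parameter-elliptic resolvent, should yield by interpolation
\[
\|(\lambda-\widehat A_{\widehat T})^{-1}\|_{\cL(\cK^{0,\gamma}_2,\,\cK^{\mu-1,\gamma+\mu-1}_2)} \;=\; O(|\lambda|^{-1/\mu}),
\]
so that $\|\widehat R_\rho(\lambda-\widehat A_{\widehat T})^{-1}\|_{\cL(\cK^{0,\gamma}_2)}\to 0$ as $|\lambda|\to\infty$. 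Securing this interpolation inequality is where the parameter-dependent pseudodifferential/edge-cone calculus of Section \ref{sect:4} enters crucially: each $-x\partial_x$-derivative saved from the maximal order $\mu$ must carry a factor $|\lambda|^{-1/\mu}$. Once this smallness is in hand, a Neumann series furnishes the inverse of $\lambda-\widehat A_{\widehat T}-\widehat R_\rho$ for $|\lambda|$ large, and the bound $\|\lambda(\lambda-\widehat A_{\widehat T})^{-1}\|=O(1)$ is preserved since $\|(\lambda-\widehat A_{\widehat T}-\widehat R_\rho)^{-1}\|\le(1+o(1))\|(\lambda-\widehat A_{\widehat T})^{-1}\|$. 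Transporting the conclusion back via multiplication by $[x]^{-\rho}$ gives the proposition; the converse implication follows by the same argument with $\rho$ replaced by $-\rho$.
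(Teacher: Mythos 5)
Your plan reproduces the paper's treatment of the \emph{estimate} part quite closely: the paper also observes that $\widehat A^{[\rho]}-\widehat A$ is an operator of order $\mu-1$ supported away from $x=0$, hence bounded $\cK^{\mu-1,\cdot}_2\to\cK^{0,\gamma}_2$; it interpolates the $O(\gl^{-1})$ bound with the uniform bound of the resolvent into the domain to get an $O(\gl^{-1/\mu})$ estimate with one derivative lost; and then closes via the second resolvent identity / Neumann series. Whether one first conjugates by $[x]^\rho$ (as you do) or writes the second resolvent identity directly between the two conjugated resolvents (as the paper does) is a cosmetic difference.

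The real gap is the \emph{pure invertibility} claim, the first sentence of the proposition. It asserts that invertibility of $\gl-\widehat A_{\widehat T}$ for one weight $\rho$ (for $\gl\in\gL$ large) is equivalent to invertibility for $\rho=0$, with no quantitative hypothesis at all. Your Neumann-series argument is not available there: to show that $\widehat R_\rho(\gl-\widehat A_{\widehat T})^{-1}$ is small you need the $O(\gl^{-1})$ decay (or at least the interpolated $O(\gl^{-1/\mu})$ decay), but invertibility alone gives you no decay rate whatsoever. In fact you explicitly invoke (E3), which is precisely the estimate assumption and is not a hypothesis of this proposition — indeed, Corollary~\ref{AltE3} is using Proposition~\ref{prop:weight} to \emph{characterize} (E3), so appealing to (E3) here would be circular. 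The paper proves the invertibility transfer by a qualitatively different argument: using normality and surjectivity of $\widehat T$ it reduces to the full column $\binom{\gl-\widehat A}{\widehat T}$ on the weighted spaces; conjugating by $[x]^\rho$ changes this column by an operator of order $\mu-1$ that vanishes near $x=0$ and is therefore a \emph{compact} perturbation on the model cone, so the Fredholm index is the same for all $\rho$; finally, the kernel is independent of $\rho$ because elliptic regularity forces its elements to be rapidly decreasing as $x\to\infty$, hence to lie in every weighted space. You would need to add this index-stability/compactness argument to close the first part. The rest of your plan (the $[x]^\rho$-conjugation and the identification $\underline{\widehat\sE}=\go\underline{\widehat\sF}$ with $\go$ supported where $[x]\equiv1$) is fine and matches the paper's use of the hypothesis that $\widehat T$ commutes with cut-offs.
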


\begin{proof}
Suppose the operator $\lambda - {\widehat{A}}_{\widehat{T}}$ in \reftext{\eqref{eq:weight.1}} is invertible. By assumption, the boundary condition
$T$ is normal, hence so is $\widehat{T}$. Since normality is invariant under
conjugation by $[x]^{\rho }$ and implies surjectivity, we obtain the surjectivity
of
\begin{equation*}
\widehat{T}:{\mathcal{K}}^{\mu ,\gamma +\mu }_{2}(Y^{\wedge },E^{\wedge })^{\rho }\longrightarrow \mathop{\text{$\oplus $}}\limits _{j=0}^{\mu -1} {{
\mathcal{K}}}^{\mu -j-1/2,\gamma +\mu -j-1/2}_{22}(\partial Y^{\wedge },{F}_{j}^{\wedge })^{\rho }.
\end{equation*}

The invertibility of $\lambda -{\widehat{A}}_{\widehat{T}}$ in \reftext{\eqref{eq:weight.1}} therefore is equivalent to that of
\begin{align}
\label{eq:weight.2}
\binom{\lambda -\widehat{A}}{\widehat{T}}: {\mathcal{K}}^{\mu ,\gamma +
\mu }_{2}(Y^{\wedge },E^{\wedge })^{\rho }\oplus
\underline{\widehat{{\mathscr{E}}}} \longrightarrow
\begin{matrix}
{\mathcal{K}}^{0,\gamma }_{2}(Y^{\wedge })^{\rho
}\\
\oplus
\\
\mathop{\text{$\oplus $}}\limits _{j=0}^{\mu -1} {{\mathcal{K}}}^{\mu -j-1/2,
\gamma +\mu -j-1/2}_{22}(\partial Y^{\wedge },{F}_{j}^{\wedge })^{\rho },
\end{matrix}
\end{align}
see e.g. \cite[Corollary 8.2]{CSS1}. As a consequence,
\begin{equation*}
\binom{\lambda -\widehat{A}}{\widehat{T}}: {\mathcal{K}}^{\mu ,\gamma +
\mu }_{2}(Y^{\wedge },E^{\wedge })^{\rho }\longrightarrow
\begin{matrix}
{\mathcal{K}}^{0,\gamma }_{2}(Y^{\wedge })^{\rho
}\\
\oplus
\\
\mathop{\text{$\oplus $}}\limits _{j=0}^{\mu -1} {{\mathcal{K}}}^{\mu -j-1/2,
\gamma +\mu -j-1/2}_{22}(\partial Y^{\wedge },{F}_{j}^{\wedge })^{\rho
}\end{matrix}
\end{equation*}
is a Fredholm operator of index
$-N=-\dim \underline{\widehat{{\mathscr{E}}}} $, and the same is true for
\begin{align}
\label{eq:weight.3}
\binom{\lambda -\widehat{A}^{[\rho ]}}{\widehat{T}^{[\rho ]}}: {
\mathcal{K}}^{\mu ,\gamma +\mu }_{2}(Y^{\wedge },E^{\wedge })
\longrightarrow
\begin{matrix}
{\mathcal{K}}^{0,\gamma }_{2}(Y^{\wedge })
\\
\oplus
\\
\mathop{\text{$\oplus $}}\limits _{j=0}^{\mu -1} {{\mathcal{K}}}^{\mu -j-1/2,
\gamma +\mu -j-1/2}_{22}(\partial Y^{\wedge },{F}_{j}^{\wedge }),
\end{matrix}
\end{align}
where $\widehat{A}^{[\rho ]} = [x]^{\rho }\widehat{A} [x]^{-\rho }$ and
$\widehat{T}^{[\rho ]} = [x]^{\rho }\widehat{T} [x]^{-\rho }$. Also the operator
$\binom{\widehat{A}}{\widehat{T}}$ acts between the spaces in \reftext{\eqref{eq:weight.3}}. Moreover, the difference
$\binom{\widehat{A}}{\widehat{T}}-
\binom{\widehat{A}^{[\rho ]}}{\widehat{T}^{[\rho ]}}$ vanishes for small
$x$ and is an operator of order $\mu -1$. Hence, the difference is compact
between the spaces in \reftext{\eqref{eq:weight.3}}, and the operator
$\binom{\widehat{A}}{\widehat{T}}$ acting as in \reftext{\eqref{eq:weight.3}} also
has index $-N$. As a consequence,
\begin{equation*}
\binom{\lambda -\widehat{A}}{\widehat{T}}: {\mathcal{K}}^{\mu ,\gamma +
\mu }_{2}(Y^{\wedge },E^{\wedge })\oplus
\underline{\widehat{{\mathscr{E}}}}\longrightarrow
\begin{matrix}
{\mathcal{K}}^{0,\gamma }_{2}(Y^{\wedge })
\\
\oplus
\\
\mathop{\text{$\oplus $}}\limits _{j=0}^{\mu -1} {{\mathcal{K}}}^{\mu -j-1/2,
\gamma +\mu -j-1/2}_{22}(\partial Y^{\wedge },{F}_{j}^{\wedge }),
\end{matrix}
\end{equation*}
has index zero. In fact, it is invertible: In view of the ellipticity of
$\binom{\widehat{A}}{\widehat{T}}$, any function in its kernel is rapidly
decreasing as $x\to \infty $ and therefore also belongs to the kernel of
$\binom{\widehat{A}}{\widehat{T}}$ acting as in \reftext{\eqref{eq:weight.2}}, which
by assumption is $\{0\}$. The surjectivity of the boundary operator then
implies the invertibility of
\begin{equation*}
\lambda - \widehat{A}_{\widehat{T}}: {\mathcal{K}}^{\mu ,\gamma +\mu }_{2}(Y^{\wedge },E^{\wedge })_{\widehat{T}} \oplus
\underline{\widehat{{\mathscr{E}}}} \longrightarrow {\mathcal{K}}^{0,
\gamma }_{2}(Y^{\wedge },E^{\wedge }).
\end{equation*}

Let us finally check the estimates. If
$\|(\lambda -\widehat{A}_{\widehat{T}})^{-1}\|_{{\mathscr{L}}({
\mathcal{K}}^{0,\gamma }_{2}(Y^{\wedge }, E^{\wedge })^{\rho })} = O(
\lambda ^{-1})$, then also
\begin{equation}
\label{eq:weight.4}
\|[x]^{\rho } (\lambda -\widehat{A}_{\widehat{T}})^{-1} [x]^{-\rho } \|_{{
\mathscr{L}}({\mathcal{K}}^{0,\gamma }_{2} (Y^{\wedge },E^{\wedge }))} = O(
\langle \lambda \rangle ^{-1}).
\end{equation}
We can therefore consider the difference of
$(\lambda -\widehat{A}^{[\rho ]}_{\widehat{T}^{[\rho ]}})^{-1} = [x]^{
\rho } (\lambda -\widehat{A}_{\widehat{T}})^{-1} [x]^{-\rho }$ and
$(\lambda -\widehat{A}_{\widehat{T}})^{-1} $ as bounded operators in
${\mathcal{K}}^{0,\gamma }_{2}(Y^{\wedge },E^{\wedge })$. The range of
$[x]^{\rho } (\lambda -\widehat{A}_{\widehat{T}})^{-1} [x]^{-\rho }$ is the
image of the domain in \reftext{\eqref{eq:weight.1}} under multiplication by
$[x]^{\rho }$, i.e.,
\begin{equation*}
{\mathcal{K}}^{\mu ,\gamma +\mu }_{2}(Y^{\wedge },E^{\wedge })_{\widehat{T}^{[
\rho ]}}\oplus [x]^{\rho } \underline{\widehat{{\mathscr{E}}}}.
\end{equation*}
In view of the fact that the boundary condition $\widehat{T}$ commutes with
cut-off functions $\omega $, this is a subset of the maximal domain of
$\widehat{A}$: The projection $\widehat{R}\widehat{T}$ in
\reftext{(\ref{eq:complement-hat})} is not needed, so that
$\underline{\widehat{\mathscr{E}}} =\omega
\underline{\widehat{\mathscr{F}}}$. Since we can take $\omega $ to have
support so close to $x=0$ that $[x]\equiv 1$ on its support,
$[x]^{\rho } \underline{\widehat{{\mathscr{E}}}} =
\underline{\widehat{{\mathscr{E}}}}$. Hence $\widehat{A}$ maps the range
of $(\lambda -\widehat{A}^{[\rho ]}_{\widehat{T}^{[\rho ]}})^{-1}$ to
${\mathcal{K}}^{0,\gamma }(Y^{\wedge }, E^{\wedge })$ and we can write
\begin{equation*}
(\lambda -\widehat{A}_{\widehat{T}})^{-1}- (\lambda -\widehat{A}^{[\rho ]}_{
\widehat{T}^{[\rho ]}})^{-1} =(\lambda -\widehat{A}_{\widehat{T}})^{-1}(
\widehat{A}^{[\rho ]} - \widehat{A})(\lambda -\widehat{A}^{[\rho ]}_{
\widehat{T}^{[\rho ]}})^{-1}.
\end{equation*}
Since $A$ and $A^{[\rho ]}$ coincide for small $x$ and differ by an operator
of order $\mu -1$,
\begin{equation*}
\widehat{A}^{[\rho ]} - \widehat{A}: {\mathcal{K}}^{\mu -1,\gamma }_{2}(Y^{\wedge }, E^{\wedge })\longrightarrow {\mathcal{K}}^{0,\gamma }_{2}(Y^{\wedge }, E^{\wedge })
\end{equation*}
is bounded. On the other hand, by interpolation for the
${\mathcal{K}}^{*,*}_{2}$-spaces, see e.g. \cite[Lemma 4.1]{SS2}, we conclude
from \reftext{\eqref{eq:weight.4}} that
\begin{equation*}
\|(\lambda -\widehat{A}^{[\rho ]}_{\widehat{T}^{[\rho ]}})^{-1}\| _{{
\mathscr{L}}({\mathcal{K}}^{0,\gamma }_{2}(Y^{\wedge },E^{\wedge }), {
\mathcal{K}}^{\mu -1,\gamma }_{2}(Y^{\wedge },E^{\wedge }))} = O(\langle
\lambda \rangle ^{-1/\mu }).
\end{equation*}
This allows us to conclude that, for sufficiently large $|\lambda |$,
$\lambda \in \Lambda $,
\begin{equation*}
(\lambda -\widehat{A}_{\widehat{T}})^{-1} = (\lambda -\widehat{A}^{[
\rho ]}_{\widehat{T}^{[\rho ]}})^{-1} \Big ( 1-( \widehat{A}^{[\rho ]} -
\widehat{A}) (\lambda -\widehat{A}^{[\rho ]}_{\widehat{T}^{[\rho ]}})^{-1}
\Big )^{-1}.
\end{equation*}
Since the second factor on the right-hand side is bounded for large
$\lambda \in \Lambda $, we obtain the resolvent estimate for
$(\lambda -\widehat{A}_{\widehat{T}})^{-1} $ on
${\mathcal{K}}^{0,\gamma }_{2}(Y^{\wedge }, E^{\wedge })$.

Similarly, we can derive the estimate for any other $\rho $ from that for
$\rho =0$.
\end{proof}

\begin{corollary}%
\label{AltE3}
Suppose that $\widehat{T}$ commutes with cut-off functions. Then condition
\textup{(E3)} is equivalent to the following: There exists a $C\ge 0$ such
that for all $\lambda \in \Lambda $, $|\lambda |$ sufficiently large, the
realization $\lambda -\underline{\widehat{A}}_{\widehat{T}}$ is invertible
in ${\mathcal{K}}_2^{0,\gamma }(Y^{\wedge }, E^{\wedge })$ and
\begin{equation}
\label{E3Alternative}
\|(\lambda -\underline{\widehat{A}}_{\widehat{T}})^{-1}\|_{{\mathscr{L}}({
\mathcal{K}}^{0,\gamma }_{2}(Y^{\wedge }, E^{\wedge }))} = O(\lambda ^{-1}).
\end{equation}
\end{corollary}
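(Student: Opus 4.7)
The plan is to imitate the proof of Proposition~\ref{prop:weight}, but now with a conjugation that shifts the Mellin weight \emph{at the tip} from $\gamma$ to $0$, instead of shifting the behavior at infinity. The commutation hypothesis on $\wh T$ will play the same role here as it did there, namely guaranteeing that the conjugating function passes through $\wh T$ trivially, so that the nontrivial content reduces to an invertibility statement on the $\cK^{s,0}_2$-scale.

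Concretely, I would pick a smooth positive function $\chi$ on $(0,\infty)$ with $\chi(x)=x$ for $x\le 1/4$ and $\chi(x)\equiv 1$ for $x\ge 1$. Then multiplication by $\chi^\gamma$ is a topological isomorphism $\chi^\gamma:\cK^{s,0}_2(Y^\wedge,E^\wedge)\to \cK^{s,\gamma}_2(Y^\wedge,E^\wedge)$ for every $s$: near zero it shifts the Mellin weight by $\gamma$, while at infinity it is the identity and the two spaces coincide. The commutation of $\wh T$ with cutoffs---which for Dirichlet and Neumann amounts to the absence of $x$-derivatives in $\wh T$---yields $\chi^{-\gamma}\wh T\chi^\gamma=\wh T$, and in particular $\wh T_0=\wh T$. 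Combined with the representation $\underline{\wh\sE}=\omega\underline{\wh\sF}$ of Remark~\ref{rem:RT}, where $\omega$ may be chosen supported in $\{x\le 1/4\}$, this gives $\chi^\gamma(x^{-\gamma}\underline{\wh\sE})=\underline{\wh\sE}$, so that $\chi^\gamma$ maps $\sD(\underline{\wh A}_{0,\wh T_0})$ bijectively onto $\sD(\underline{\wh A}_{\wh T})$. Setting $\wt A:=\chi^{-\gamma}\wh A\chi^\gamma$, the invertibility of $\gl-\underline{\wh A}_{\wh T}$ on $\cK^{0,\gamma}_2$ with resolvent bound $O(\gl^{-1})$ is therefore equivalent to the analogous statement for $\gl-\underline{\wt A}_{\wh T}$ on $\cK^{0,0}_2$.

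It then remains to compare $\wt A$ with $\wh A_0=x^{-\gamma}\wh A x^\gamma$. By construction they agree for $x\le 1/4$, and for $x\ge 1$ the difference $P:=\wh A_0-\wt A$ equals $-x^{-\gamma}[\wh A,x^\gamma]$. A short commutator computation based on $[-x\partial_x,x^\gamma]=-\gamma x^\gamma$ shows that $P$ is a differential operator of order $\mu-1$ on $\{x\ge 1/4\}$ and vanishes elsewhere, so that $P:\cK^{\mu-1,0}_2\to\cK^{0,0}_2$ is bounded. From this point I would copy the final paragraph of the proof of Proposition~\ref{prop:weight}: a bound $\|(\gl-\underline{\wt A}_{\wh T})^{-1}\|_{\sL(\cK^{0,0}_2)}=O(\gl^{-1})$ together with $\cK^{*,*}_2$-interpolation (\cite[Lemma~4.1]{SS2}) yields $\|(\gl-\underline{\wt A}_{\wh T})^{-1}\|_{\sL(\cK^{0,0}_2,\cK^{\mu-1,0}_2)}=O(\skp\gl^{-1/\mu})$, so $1-(\gl-\underline{\wt A}_{\wh T})^{-1}P$ is invertible in $\sL(\cK^{0,0}_2)$ for large $|\gl|$; the factorisation $\gl-\underline{\wh A}_{0,\wh T_0}=(\gl-\underline{\wt A}_{\wh T})(1-(\gl-\underline{\wt A}_{\wh T})^{-1}P)$ then gives the desired resolvent bound, and the reverse implication is symmetric. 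The main obstacle I expect lies not in this analytic step, which is essentially a copy of one already in Proposition~\ref{prop:weight}, but in the domain bookkeeping of the first step: verifying that $\chi^\gamma$ really carries $x^{-\gamma}\underline{\wh\sE}$ exactly onto $\underline{\wh\sE}$ is the single place where both the commutation hypothesis and the description $\underline{\wh\sE}=\omega\underline{\wh\sF}$ from Remark~\ref{rem:RT} are indispensable.
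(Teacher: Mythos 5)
Your proof is correct, and it arrives at the same estimate as the paper, but the organization is genuinely different. The paper's proof is a two-liner: it observes that multiplication by $x^\gamma$ is an isometry $\cK^{0,0}_2(Y^\wedge)\to\cK^{0,\gamma}_2(Y^\wedge)^{-\gamma}$ which intertwines $\underline{\wh A}_{0,\wh T_0}$ with $\underline{\wh A}_{\wh T}$ (carrying $\cK^{\mu,\mu}_2$ onto $\cK^{\mu,\gamma+\mu}_2(Y^\wedge)^{-\gamma}$ and $x^{-\gamma}\underline{\wh\sE}$ onto $\underline{\wh\sE}$), so that (E3) becomes precisely the hypothesis of Proposition~\ref{prop:weight} in the $[x]$-weighted scale; the proposition then strips the weight $\rho=-\gamma$ at infinity and yields the corollary directly. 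You instead decompose the conjugation: $\chi^\gamma$ performs only the tip-shift (since $\chi\equiv 1$ at infinity, it maps $\cK^{s,0}_2$ to $\cK^{s,\gamma}_2$ on the nose, with no $[x]$-weight appearing), and the price you pay is that you land on the auxiliary operator $\wt A=\chi^{-\gamma}\wh A\chi^\gamma$ rather than on $\wh A_0$, so you must re-run the Neumann-series perturbation argument from Proposition~\ref{prop:weight} to pass from $\wt A$ to $\wh A_0$. Since $x^{-\gamma}$ differs from $\chi^{-\gamma}[x]^{-\gamma}$ only by a smooth factor bounded above and below, your perturbation between $\wt A$ and $\wh A_0$ and the paper's perturbation between $\wh A$ and $\wh A^{[\rho]}$ are the same estimate viewed from the two sides of the conjugation; what the paper buys by going through the $[x]^\rho$-weighted scale is that this work is already packaged in Proposition~\ref{prop:weight}, so the corollary's proof carries no new analysis. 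Your version is longer but more self-contained, and it avoids ever writing down the spaces $\cK^{s,\gamma}_2(Y^\wedge)^\rho$.

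One small point to tidy: your $P=\wh A_0-\wt A$ is supported in $\{x\ge 1/4\}$ where it equals $x^{-\gamma}[\wh A,x^\gamma]$ (the sign you wrote is off, though immaterial), and it is a Fuchs-type operator of the form $x^{-\mu}\sum_{j\le\mu-1}b_j(-x\partial_x)^j$ rather than $x^{-(\mu-1)}\sum b_j(-x\partial_x)^j$; the extra $x^{-1}$ is harmless because $x$ is bounded below on the support, but you should say so, since otherwise the claimed boundedness $P:\cK^{\mu-1,0}_2\to\cK^{0,0}_2$ does not follow from the generic mapping properties of order-$(\mu-1)$ cone operators. Also worth noting explicitly is that $P$ annihilates the finite-dimensional summand $x^{-\gamma}\underline{\wh\sE}$ (it is supported in $\{x\le 1/4\}$ where $P\equiv 0$), which is what lets the interpolation estimate $\|(\gl-\underline{\wt A}_{\wh T})^{-1}\|_{\sL(\cK^{0,0}_2,\cK^{\mu-1,0}_2)}=O(\skp\gl^{-1/\mu})$ be used without worrying about the non-Sobolev part of the domain; the same implicit point occurs in Proposition~\ref{prop:weight}, so this is not a defect of your argument relative to the paper, but making it explicit strengthens both.
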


\begin{proof}
\textup{(E3)} is equivalent to the invertibility of
$\lambda -\widehat{A}_{\widehat{T}}: {\mathcal{K}}_{2}^{\mu ,\gamma +\mu }(Y^{\wedge }, E^{\wedge })^{-\gamma }_{\widehat{T}}\oplus
\underline{\widehat{\mathscr{E}}}\rightarrow {\mathcal{K}}_{2}^{0,
\gamma }(Y^{\wedge },\break  E^{\wedge })^{-\gamma }$ with an
$O(\lambda ^{-1})$ estimate for
$(\lambda -\widehat{A}_{\widehat{T}})^{-1}$ on
${\mathcal{K}}_{2}^{0,\gamma }(Y^{\wedge }, E^{\wedge })^{-\gamma }$. By \reftext{Proposition~\ref{prop:weight}}, this is equivalent to the invertibility of
$\lambda -\widehat{A}_{\widehat{T}}: {\mathcal{K}}_{2}^{\mu ,\gamma +\mu }(Y^{\wedge }, E^{\wedge })_{\widehat{T}}\oplus
\underline{\widehat{\mathscr{E}}}\rightarrow {\mathcal{K}}_{2}^{0,
\gamma }(Y^{\wedge }, E^{\wedge })$ with an $O(\lambda ^{-1})$ estimate for
the inverse on ${\mathcal{K}}^{0,\gamma }_{2}(Y^{\wedge }, E^{\wedge })$.
\end{proof}

\subsection{Dilation invariant domains}
\label{subsec:dilation}

We call a subspace ${\mathscr{D}}$ of
${\mathcal{K}}^{0,\gamma }_{p}(Y^{\wedge }, E^{\wedge })$ dilation-invariant,
if $u\in {\mathscr{D}}$ implies that
$\kappa _{\rho }u \in {\mathscr{D}}$ for arbitrary $\rho >0$ with
$\kappa $ as in \reftext{\eqref{eq:standard-kappa}}.

Suppose ${\mathscr{D}}(\underline{\widehat{A}}_{\widehat{T}}) $ is dilation
invariant. Then, for all $\eta \in \Sigma $ and $\rho >0$,
\begin{equation*}
\eta ^{\mu }- \underline{\widehat{A}}_{\widehat{T}} = \rho ^{\mu }\kappa _{
\rho } ((\eta /\rho )^{\mu }-\underline{\widehat{A}}_{\widehat{T}})
\kappa _{\rho }^{-1}: {\mathscr{D}}(\underline{\widehat{A}}_{\widehat{T}})
\longrightarrow {\mathcal{K}}^{0,\gamma }_{p}(Y^{\wedge }, E^{\wedge }).
\end{equation*}

Choosing $\rho = |\eta |$, we conclude that
$\eta ^{\mu }-\underline{\widehat{A}}_{\widehat{T}}$ is invertible for large
$|\eta |$, $\eta \in \Sigma $, if and only if it is invertible for all
$\eta \in \Sigma \setminus \{0\}$, if and only if it is invertible for
all $\eta \in \Sigma $ with $|\eta |=1$.

The dilation invariance of
${\mathscr{D}}(\underline{\widehat{A}}_{\widehat{T}})$ in
${\mathcal{K}}^{0,\gamma }_{2}(Y^{\wedge }, E^{\wedge })$ implies that of
${\mathscr{D}}(\underline{\widehat{A}}_{0,\widehat{T}_{0}})$ in
${\mathcal{K}}^{0,0}_{2}(Y^{\wedge }, E^{\wedge })$. The fact that
$\kappa _{\rho }$ is unitary on
${\mathcal{K}}^{0,0}_{2}(Y^{\wedge }, E^{\wedge })$ then shows that
\begin{align*}
\|(\eta ^{\mu }-&\underline{\widehat{A}}_{0,\widehat{T}_{0}})^{-1}\|_{{
\mathscr{L}}({\mathcal{K}}^{0,0}_{2}(Y^{\wedge },E^{\wedge }))} = \|\kappa ^{-1}_{|
\eta |} (\eta ^{\mu }-\underline{\widehat{A}}_{0,\widehat{T}_{0}})^{-1}
\kappa _{|\eta |}\|%
_{{\mathscr{L}}({\mathcal{K}}^{0,0}_{2}(Y^{\wedge },E^{\wedge }))}
\\
&=|\eta |^{-\mu } \|((\eta /|\eta |)^{\mu }-\underline{\widehat{A}}_{0,
\widehat{T}_{0}})^{-1}\|%
_{{\mathscr{L}}({\mathcal{K}}^{0,0}_{2}(Y^{\wedge },E^{\wedge }))} \le C|
\eta |^{-\mu }
\end{align*}
provided the inverse on the right-hand side exists for
$\eta \in \Sigma $, $|\eta |=1$. We see:

\begin{proposition}%
For dilation invariant domains, condition \textup{(E3)} is equivalent to the
existence of $(\lambda -\underline{\widehat{A}}_{\widehat{T}} )^{-1}$ for
$\lambda \in \Lambda $, $|\lambda |=1$.
\end{proposition}

\subsection{The $H_{\infty }$-calculus}
\label{sec5.4}

In order to obtain a precise structure of the resolvent, we shall make
use of the pseudodifferential calculus for boundary value problems on manifolds
with edges as presented in Section 4 of \cite{KS} and Section 7 of
\cite{HS}. The basic elements are recalled in Section~\ref{app:cone} of
the appendix.

\begin{proposition}[Parametrix]%
\label{rp}
Assume that \textup{(E1), (E$2_{\gamma }$)} are fulfilled and that
\begin{align}
\label{modelconeAT}
\begin{pmatrix}
\eta ^{\mu }-\widehat{A}
\\
\widehat{T}
\end{pmatrix} : \mathcal{K}_{2}^{\mu ,\gamma +\mu }(Y^{\wedge },E^{\wedge })
\longrightarrow
\begin{matrix}
\mathcal{K}^{0,\gamma }_{2} (Y^{\wedge },E^{\wedge })
\\
\oplus
\\
\mathop{\oplus }\limits _{j=0}^{\mu -1} {\mathcal{K}}_{2}^{
\mu -j-1/2,\gamma +\mu -j-1/2}(\partial Y^{\wedge }, F_{k}^{\wedge })
\end{matrix}
\end{align}
is injective for $0\neq\eta \in \Sigma $. Then there exists a
$B \in C^{-\mu ,0}({\mathbb{D}},\Sigma ;\gamma ,\gamma +\mu )$ with the
following properties:
\begin{itemize}
\item[(i)]
$B(\eta )\in \mathcal{L}(\mathcal{H}_{2}^{0,\gamma }(\mathbb{D}),
\mathcal{H}_{2}^{\mu ,\gamma +\mu }(\mathbb{D})_{T})$ for every
$\eta \in \Sigma $, $|\eta |$ large.
\item[(ii)] $(\eta ^{\mu }-{A})B(\eta ) -1=G_{R}(\eta )$ with
$G_{R}\in C_{G}^{0,0}({\mathbb{D}},\Sigma ;\gamma ,\gamma )$.
\item[(iii)] $B(\eta )(\eta ^{\mu }-{A})=1+G_{L}(\eta )$ for some
$G_{L}\in C_{G}^{0,\mu }(\mathbb{D},\Sigma ;\gamma +\mu ,\gamma +\mu )$,
and $B(\eta )(\eta ^{\mu }-{A})=1$ on
${\mathcal{H}}^{\mu ,\gamma +\mu }_{p}({\mathbb{D}})_{T}$.
\end{itemize}
\end{proposition}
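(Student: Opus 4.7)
The plan is to realize $B(\eta)$ as the bulk component of a two-sided parameter-dependent parametrix of the full boundary value problem $\binom{\eta^\mu-A}{T}$ inside the cone pseudodifferential calculus for boundary value problems on manifolds with edges recalled in Section \ref{app:cone} (following \cite{KS} and \cite{HS}). Once parameter-ellipticity of $\binom{\eta^\mu-A}{T}$ in this calculus is established, a two-sided parametrix exists by general machinery; the remaining work is to massage it so that the range lands in $\ker T$ and so that the exact identity on the minimal domain claimed in (iii) holds.

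\textbf{Parameter-ellipticity of} $\binom{\eta^\mu-A}{T}$. The interior symbol $\bgs_\psi^\mu\bigl(\binom{\eta^\mu-A}{T}\bigr)$ and its rescaled version, and the boundary symbol $\bgs_\partial^\mu$ and its rescaled version, are invertible for $0\ne\eta\in\Sigma$ by (E1), after identifying $\lambda=\eta^\mu\in\Lambda$ in Definition \ref{def:parameter-elliptic}. The conormal symbol is invertible on the two critical lines $\re z=\frac{n+1}{2}-\gamma$ and $\re z=\frac{n+1}{2}-\gamma-\mu$ by (E$2_\gamma$). The only substantial input that remains is invertibility of the edge (model-cone) symbol on $\Sigma\setminus\{0\}$. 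Hypothesis \eqref{modelconeAT} supplies injectivity; combining it with the Fredholm property that follows from the other symbolic conditions (via \cite[Theorem 6.2.19]{HS}) and with a formal-adjoint argument of the type used in the proof of Proposition \ref{prop:SI_for_model_cone} (where the invertibility of the conormal symbol on $\re z=\frac{n+1}{2}-\gamma$ triggers $\D$-ellipticity of the adjoint problem and hence triviality of the cokernel), injectivity upgrades to invertibility.

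\textbf{From parametrix to $B$.} The standard construction in the cone calculus then produces a two-sided parametrix $Q(\eta)=(B_0(\eta),K_0(\eta))$ of $\binom{\eta^\mu-A}{T}$, with $B_0\in C^{-\mu,0}(\D,\Sigma;\gamma,\gamma+\mu)$ and $K_0$ a parameter-dependent Poisson-type term, such that
\begin{equation*}
\binom{\eta^\mu-A}{T}Q(\eta)=I+G(\eta),\qquad Q(\eta)\binom{\eta^\mu-A}{T}=I+G'(\eta),
\end{equation*}
with $G,G'$ Green operators of the appropriate classes. Reading the first identity componentwise gives $(\eta^\mu-A)B_0=1+G_{11}\in 1+C_G^{0,0}$ and $TB_0=G_{21}\in C_G^{-\infty,0}$. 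To enforce (i) I set $B(\eta):=(1-RT)B_0(\eta)$, where $R$ is a right-inverse to $T$ in Boutet de Monvel's calculus on $\D$ (existing by the normality of $T$ supplied via (E1) and \eqref{eq:Tsurjective}); then $TB\equiv 0$, while the correction $-R\,G_{21}$ is itself Green, so $B$ remains in $C^{-\mu,0}(\D,\Sigma;\gamma,\gamma+\mu)$. Composing with $\eta^\mu-A$ and absorbing the additional Green term gives (ii), and the mapping property in (i) is the standard one for an operator of this class and order. The first assertion of (iii) follows analogously from the second parametrix identity, producing $G_L\in C_G^{0,\mu}$. For the exact identity $B(\eta)(\eta^\mu-A)=1$ on $\cH^{\mu,\gamma+\mu}_p(\D,E)_T$, I would combine Proposition \ref{prop:independent} with Proposition \ref{prop:SI_for_model_cone} and (E3) to conclude that, for $|\eta|$ sufficiently large, $\eta^\mu-\underline A_T$ is an isomorphism from $\cH^{\mu,\gamma+\mu}_p(\D,E)_T$ onto $\cH^{0,\gamma}_p(\D,E)$; since $B(\eta)$ provides a left inverse modulo $G_L$, it must coincide with the true inverse on this subspace, forcing $G_L$ to vanish there.

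\textbf{Main obstacle.} The nontrivial step is the upgrade of injectivity in \eqref{modelconeAT} to invertibility of the edge symbol; this is precisely where the second critical line in (E$2_\gamma$) and the Fredholm/adjoint argument of Proposition \ref{prop:SI_for_model_cone} enter. A secondary technical point is to verify that the projection $1-RT$ and the passage to $B$ stay within the operator class $C^{-\mu,0}(\D,\Sigma;\gamma,\gamma+\mu)$ and only alter the Green remainders, which is guaranteed by closure of the calculus under composition with parameter-independent Boutet de Monvel operators on $\D$. Once these two points are settled the remaining manipulations are bookkeeping within the calculus.
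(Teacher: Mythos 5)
Your outline identifies the right calculus and the correct role of (E1) and (E$2_\gamma$), but it contains two genuine gaps.

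\textbf{Injectivity of the edge symbol does not upgrade to invertibility.} You argue that \eqref{modelconeAT} is Fredholm (true, via \cite[Theorem 6.2.19]{HS}) and then that $\D$-ellipticity of the formal adjoint, supplied by (E$2_\gamma$) through \cite[Corollary 7.3]{CSS1}, gives ``triviality of the cokernel,'' hence invertibility. This does not follow: $\D$-ellipticity of the adjoint problem says nothing about \emph{injectivity} of the adjoint edge symbol, which is what vanishing of the cokernel requires. In fact the edge symbol $\binom{\eta^\mu-\widehat A}{\widehat T_1(\eta)}$ is in general injective but \emph{not} surjective — the index is not controlled and may be negative. That is precisely why the paper invokes Krainer's bordering argument (\cite[Theorem 7.21]{Kr}): one adjoins a finite-rank potential operator $\widehat K_1(\eta):\C^d\to\sS^\infty(Y^\wedge,E^\wedge)$ so that the \emph{bordered} matrix becomes invertible, extends it $\mu$-homogeneously, and feeds the bordered operator $\begin{pmatrix}\eta^\mu-A&K_1(\eta)\\T_1(\eta)&0\end{pmatrix}$ into the elliptic parametrix machinery. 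Without this finite-dimensional augmentation your parametrix construction does not get off the ground, and the resulting remainders are genuinely Green of order $0$, not smoothing — consistent with the fact that (ii) only asserts $G_R\in C_G^{0,0}$, not $C_G^{-\infty,0}$.

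\textbf{The exact identity in (iii) must not rely on (E3).} To obtain $B(\eta)(\eta^\mu-A)=1$ on $\cH^{\mu,\gamma+\mu}_p(\D)_T$ you propose to invoke Propositions \ref{prop:independent}, \ref{prop:SI_for_model_cone} and condition (E3). But (E3) is not among the hypotheses of Proposition \ref{rp}; the proposition is stated (and used in the proof of Theorem \ref{Resolvent}, where it is also applied to the adjoint problem) under (E1), (E$2_\gamma$) and the injectivity hypothesis \eqref{modelconeAT} only. In the paper's proof the exact identity drops out structurally: after modifying the parametrix of the bordered matrix so that it coincides with the exact inverse for large $|\eta|$, the $(1,1)$-entry of $Q(\eta)\binom{\eta^\mu-A\ \ K_1(\eta)}{T_1(\eta)\ \ \ 0}=I$ reads $B(\eta)(\eta^\mu-A)+K(\eta)T_1(\eta)=1$, which is exactly $1$ on $\ker T=\ker T_1$ and gives $G_L=-K(\eta)T_1(\eta)\in C_G^{0,\mu}$ in general. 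Your post-composition with the projection $1-RT$ is therefore also unnecessary — in the paper $B(\eta)$ maps into $\ker T$ automatically from the same block identity with the factors in the other order. Finally, a minor omission: the application of the edge calculus requires first normalizing the orders of the boundary operators via parameter-dependent order reductions, replacing $T$ by $T_1(\eta)$; this should be stated, although it is a standard step.
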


\begin{proof}
In order to unify the orders in the operator matrix, we replace $T$ by
$T_{1}(\eta )= \break \operatorname{diag} (R^{\mu -1/2}(\eta ), R^{\mu -3/2}(
\eta ),\ldots ,R^{1/2}(\eta ))T$, where $R^{\mu -j+1/2}(\eta )$ is a parameter-de\-pen\-dent
order reduction on ${\mathbb{B}}$, so that
\begin{equation*}
R^{\mu -j-1/2}(\eta ): \mathcal{K}_{2}^{\mu -j-1/2,\gamma +\mu -j-1/2}({
\mathbb{B}},F_{j}) \longrightarrow \mathcal{K}_{2}^{0,\gamma }({
\mathbb{B}},F_{j})
\end{equation*}
is an isomorphism. Clearly, $\ker T_{1}(\eta ) = \ker T$. Also
$\ker \widehat{T}_{1}(\eta )=\ker \widehat{T}$, since the edge symbols of
the order reducing operators are invertible.

Now $
\begin{pmatrix}
\eta ^{\mu }-A
\\
T_{1}(\eta )%
\end{pmatrix}
$ is a symbol of order $\mu $ and type $\mu $ in Schulze's parameter-dependent
cone calculus, and its principal edge symbol,
\begin{equation}
\begin{pmatrix}
\eta ^{\mu }-\widehat{A}
\\
\widehat{T}_{1}(\eta )
\end{pmatrix}
:\mathcal{K}_{2}^{\mu ,\gamma +\mu }(Y^{\wedge }, E^{\wedge })
\longrightarrow
\begin{matrix}
\mathcal{K}_{2}^{0,\gamma } (Y^{\wedge },E^{\wedge })
\\
\oplus
\\
{\mathcal{K}}_{2}^{0,\gamma }(\partial Y^{\wedge }, F^{\wedge })
\end{matrix}
\end{equation}
is injective. Following an argument by Krainer, see the proof of Theorem
7.21 in \cite{Kr}, we find an operator family
$\widehat{K}_{1}(\eta ): \mathbb{C}^{d}\to {\mathscr{S}}^{\infty }(Y^{\wedge }, E^{\wedge })$, $\eta \in \Sigma \, \cap \, \{|\eta |=1\}$, such
that
\begin{equation*}
\begin{pmatrix}
\eta ^{\mu }-\widehat{A}& \widehat{K}_{1}(\eta )
\\
\widehat{T}_{1}(\eta )&0
\end{pmatrix}
:
\begin{matrix}
\mathcal{K}_{2}^{\mu ,\gamma +\mu }(Y^{\wedge }, E^{\wedge })
\\
\oplus
\\
{\mathbb{C}}^{d}
\end{matrix}
\longrightarrow
\begin{matrix}
\mathcal{K}_{2}^{0,\gamma } (Y^{\wedge }, E^{\wedge })
\\
\oplus
\\
{\mathcal{K}}_{2}^{0,\gamma }(\partial Y^{\wedge }, F^{\wedge })
\end{matrix}
\end{equation*}
is invertible. In fact, $\widehat{K}_{1}$ can be chosen to have an integral
kernel in
\begin{equation*}
{\mathscr{C}}^{\infty }(\Sigma \,\cap \,\{|\eta |=1\})\widehat{\otimes }_{\pi }{\mathscr{C}}_{c}^{\infty }(Y^{\wedge },E^{\wedge }) \otimes \mathbb{C}^{d};
\end{equation*}
it can be extended $\mu $-homogeneously to $\Sigma $ by
\begin{equation*}
\widehat{K}_{1}(\eta ) = |\eta |^{\mu }\kappa _{|\eta |} \widehat{K}_{1}(
\eta /|\eta |) .
\end{equation*}
Let $K_{1}(\eta ) = \omega \chi (\eta ) \widehat{K}_{1}(\eta ) $ for a zero-excision
function $\chi $ and a cut-off function $\omega $. Then
\begin{align}
\label{Ainvertible}
\begin{pmatrix}
\eta ^{\mu }- A& K_{1}(\eta )
\\
T_{1}(\eta )&0
\end{pmatrix}:
\begin{matrix}
\mathcal{H}^{\mu ,\gamma +\mu }_{2}(\mathbb{D},E)
\\
\oplus
\\
{\mathbb{C}}^{d}
\end{matrix} \longrightarrow
\begin{matrix}
\mathcal{H}_{2}^{0,\gamma } (\mathbb{D},E)
\\
\oplus
\\
{\mathcal{H}}_{2}^{0,\gamma }({\mathbb{B}}, F)
\end{matrix}
\end{align}
is an elliptic element in Schulze's parameter-dependent cone calculus.
Hence there exists a parametrix modulo regularizing Green operators.

The operator in \reftext{\eqref{Ainvertible}} is invertible for large
$|\eta |$, and we can modify the parametrix so that it coincides with the
inverse for large $|\eta |$. Denote this parametrix by
$
\begin{pmatrix}
B(\eta ) & K(\eta )
\\
S(\eta ) & Q(\eta )%
\end{pmatrix}
$. Then
\begin{equation*}
\begin{pmatrix}
1&0
\\
0&1%
\end{pmatrix}
=
\begin{pmatrix}
\eta ^{\mu }- A& K_{1}(\eta )
\\
T_{1}(\eta )&0
\end{pmatrix}
\begin{pmatrix}
B(\eta ) & K(\eta )
\\
S(\eta ) & Q(\eta )%
\end{pmatrix}
\end{equation*}
shows that $B(\eta )$ maps into the kernel of $T$ and that
$(\eta ^{\mu }- A)B(\eta )-1=-K_{1}(\eta )S(\eta )$. This shows (i) and (ii).
Interchanging the order of factors, one obtains (iii).
\end{proof}

\begin{theorem}%
\label{Resolvent}
Let the conditions \textup{(E1)-(E3)} be satisfied for $A_{0}$ with boundary
condition $T_{0}$. Then $\underline{A}_{T}$ has at most finitely many spectral
points in $\Sigma $, and there exists a parameter-dependent operator
\begin{align}
\label{eq:inverse}
C\in C^{-\mu ,0}({\mathbb{D}},\Sigma ; \gamma , \gamma +\mu ) + {
\mathcal{C}}_{G}^{-\mu ,0}({\mathbb{D}},\Sigma ;\gamma ,\gamma )
\end{align}
such that $(\eta ^{\mu }-\underline{A}_{T})^{-1}=C(\eta )$ for sufficiently
large $\eta \in \Sigma $.
\end{theorem}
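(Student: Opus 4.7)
The plan is to reduce to the weight $\gamma=0$, construct a parametrix using Proposition \ref{rp}, and then use condition (E3) to upgrade the parametrix to the true resolvent by a Green operator correction.

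\textbf{Step 1 (Reduction).} Since conjugation by $\tilde x^\gamma$ preserves the operator classes $C^{-\mu,0}(\D,\Sigma;\cdot,\cdot)$ and $\cC_G^{-\mu,0}(\D,\Sigma;\cdot,\cdot)$ (just shifting weights), and since $(\eta^\mu-\underline A_T)^{-1}=\tilde x^{\gamma}(\eta^\mu-\underline A_{0,T_0})^{-1}\tilde x^{-\gamma}$, it suffices to construct the resolvent of $\underline A_{0,T_0}$ on the spaces corresponding to $\gamma=0$. After this reduction, conditions (E1), (E2), (E3) are precisely the ones stated at the beginning of Section \ref{sec:hinfty}.

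\textbf{Step 2 (Parametrix).} I would next verify that the hypothesis of Proposition \ref{rp} holds: by (E3), $\eta^\mu-\underline{\widehat A}_{0,\widehat T_0}$ is invertible on $\cK^{\mu,\mu}_2(Y^\wedge,E^\wedge)_{\widehat T_0}\oplus\underline{\widehat \sE}$ for large $|\eta|\in\Sigma$, and by Proposition \ref{prop:SI_for_model_cone} the same holds for arbitrary $p$ and $s$; in particular, restriction to the minimal domain $\cK^{\mu,\mu}_2(Y^\wedge,E^\wedge)_{\widehat T_0}\subset\cK^{\mu,\mu}_2(Y^\wedge,E^\wedge)_{\widehat T_0}\oplus\underline{\widehat\sE}$ yields injectivity of $\binom{\eta^\mu-\widehat A_0}{\widehat T_0}$. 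Proposition \ref{rp} then supplies $B(\eta)\in C^{-\mu,0}(\D,\Sigma;0,\mu)$ with $B(\eta)\colon\cH^{0,0}_2(\D,E)\to \cH^{\mu,\mu}_2(\D,E)_{T_0}$ and
$$(\eta^\mu-A_0)B(\eta)=1+G_R(\eta),\qquad G_R\in\cC_G^{0,0}(\D,\Sigma;0,0),$$
together with an analogous left identity \textrm{(iii)}.

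\textbf{Step 3 (Green correction).} The heart of the argument is to invert $1+G_R(\eta)$ for large $|\eta|$ within the Green algebra. Its principal edge symbol $1+\widehat G_R(\eta)$ can be read off Proposition \ref{rp}: since $\widehat B(\eta)$ arises from the (1,1)-block of the edge-symbolic inverse of the augmented matrix $\bigl(\begin{smallmatrix}\eta^\mu-\widehat A_0 & \widehat K_1\\ \widehat T_{0,1}& 0\end{smallmatrix}\bigr)$, invertibility of $\eta^\mu-\underline{\widehat A}_{0,\widehat T_0}$ on its full domain (which is (E3), together with dilation invariance or the rescaling argument of Subsection \ref{subsec:dilation}) is equivalent to invertibility of $1+\widehat G_R(\eta)$ for all $0\ne\eta\in\Sigma$, with uniform control by an $O(1)$ bound after conjugation by $\kappa_{[\eta]}$. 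Standard edge-calculus machinery (as in \cite{KS,HS}) then permits inverting $1+G_R(\eta)$ asymptotically in orders: at each stage one removes the leading edge symbol of the residual using a Green operator, whose existence is guaranteed by the pointwise edge-symbolic invertibility. The asymptotic sum produces $1+G'(\eta)$ with $G'\in\cC_G^{0,0}(\D,\Sigma;0,0)$, equal to $(1+G_R(\eta))^{-1}$ modulo a smoothing, rapidly parameter-decaying Green remainder, which is absorbed by Neumann series for large $|\eta|$. Defining
$$C_0(\eta):=B(\eta)(1+G'(\eta))=B(\eta)+B(\eta)G'(\eta)$$
and using that $B\in C^{-\mu,0}$ and the composition of $B$ with a Green operator lies in $\cC_G^{-\mu,0}$ (by the composition rules for parameter-dependent Green operators in Section \ref{sect:4}), we obtain $C_0(\eta)\in C^{-\mu,0}(\D,\Sigma;0,\mu)+\cC_G^{-\mu,0}(\D,\Sigma;0,0)$. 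Part (iii) of Proposition \ref{rp} together with an analogous one-sided argument shows $C_0(\eta)$ is simultaneously a left inverse, hence equals $(\eta^\mu-\underline A_{0,T_0})^{-1}$; conjugation back by $\tilde x^\gamma$ gives the claimed operator $C(\eta)$. The finiteness of the spectrum in $\Sigma$ follows because the parametrix yields the Fredholm property of $\eta^\mu-\underline A_T$ on all of $\Sigma$, while invertibility is achieved outside a compact subset, so only finitely many discrete eigenvalues can accumulate inside.

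\textbf{Main obstacle.} The delicate step is Step 3: showing that $1+G_R(\eta)$ admits an inverse of the form $1+G'(\eta)$ with $G'$ in the same Green class. The Neumann series is not directly available (order-zero Green symbols do not have small operator norm), so one must use the edge-symbol formalism to convert the operator-theoretic invertibility (E3) into an algebraic invertibility within the parameter-dependent Green calculus, and then patch the result against a smoothing tail. Identifying $1+\widehat G_R(\eta)$ precisely in terms of the auxiliary operator $\widehat K_1$ from the proof of Proposition \ref{rp}, and checking that (E3) gives not just pointwise invertibility but the required symbolic estimates uniformly in $\eta$, is the technical core of the argument.
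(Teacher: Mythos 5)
Your Steps 1 and 2 are fine, and you correctly identify Step 3 as the crux. However, the proposed mechanism in Step 3 -- inverting $1+G_R(\eta)$ within the Green calculus to produce $(1+G_R)^{-1}=1+G'$ -- cannot work whenever $\underline\sE\neq\{0\}$, and this is not merely a technical subtlety but a structural obstruction. Here is why. By Proposition \ref{rp}(i), $B(\eta)$ maps $\cH^{0,0}_2(\D,E)$ into $\cH^{\mu,\mu}_2(\D,E)_{T}=\sD_{\min}(A_{T_0})$, which is a proper subspace of $\sD(\underline A_{0,T_0})=\sD_{\min}\oplus\underline\sE$ when $\underline\sE\neq0$. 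From $(\eta^\mu-\underline A_{0,T_0})B(\eta)=1+G_R(\eta)$ and the invertibility of $\eta^\mu-\underline A_{0,T_0}$ (Krainer), one reads off
$\mathrm{range}(1+G_R(\eta))=(\eta^\mu-\underline A_{0,T_0})(\mathrm{range}\,B(\eta))\subseteq(\eta^\mu-\underline A_{0,T_0})(\sD_{\min})$, a proper subspace of $\cH^{0,0}_2(\D,E)$. Since $G_R(\eta)$ is compact, $1+G_R(\eta)$ is Fredholm of index zero, hence also fails to be injective. Thus there is no inverse of the form $1+G'(\eta)$, and the entire asymptotic inversion cannot get off the ground. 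The claimed equivalence between (E3) and invertibility of $1+\widehat G_R(\eta)$ is likewise false for the same reason at the level of the model cone. This is precisely the obstruction that forces a different route.

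The paper's proof circumvents this by never attempting to invert $1+G_R$. Instead, after obtaining the right-parametrix identity $B=(\eta^\mu-\underline A_T)^{-1}+(\eta^\mu-\underline A_T)^{-1}G_R$, it verifies (E1)--(E2) for the adjoint boundary problem $\binom{A^t}{\widetilde T}$ (using \cite[Corollary 7.3]{CSS1} and Grubb's relation for boundary-symbol adjoints), applies Proposition \ref{rp} once more to $\binom{A^t}{\widetilde T}$ to obtain $\widetilde B$ and $\widetilde G_R$, and takes Hilbert-space adjoints to get a left-parametrix identity. Combining the two yields the algebraic identity $(\eta^\mu-\underline A_T)^{-1}=B+\widetilde B^*G_R-\widetilde G_R^*(\eta^\mu-\underline A_T)^{-1}G_R$, which expresses the resolvent not as $B$ times an inverse, but as the parametrix $B$ plus corrections in which the resolvent is \emph{sandwiched} between two Green operators. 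The sandwiching is what furnishes the required kernel regularity: by Krainer's $O(\langle\eta\rangle^{-\mu})$ estimate, the chain rule for derivatives of the resolvent, the unitarity of $\kappa$ on $\cK^{0,0}_2$, and the mapping-property characterization of Green symbols (Proposition \ref{prop:green}, applied to the term and its adjoint), one shows $\widetilde G_R^*(\eta^\mu-\underline A_T)^{-1}G_R\in\cC_G^{-\mu,0}(\D,\Sigma;0,0)$. In particular, the resolvent's component mapping onto the finite-dimensional piece $\underline\sE$ is automatically absorbed into the Green terms, which is exactly what your construction was missing. You would need to adopt this adjoint-sandwich strategy, or otherwise augment $B$ by a Green potential term accounting for $\underline\sE$ before any inversion is attempted.
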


In the above decomposition \reftext{\eqref{eq:inverse}},
$C^{-\mu ,0}({\mathbb{D}},\Sigma ; \gamma , \gamma +\mu ) $ is the symbol
class introduced in Section~\ref{app:cone}, while
${\mathcal{C}}_{G}^{-\mu ,0}({\mathbb{D}},\Sigma ;\gamma ,\gamma )$ is as
in \reftext{Definition~\ref{CGD}}.

\begin{proof}
By conjugation with $\tilde{x}^{\gamma }$ it is sufficient to show the assertion
for $A_{0,T_{0}}$ or, equivalently, to assume that the weight
$\gamma $ is zero, so that $A_{0}=A$ and $T_{0}=T$.

It follows from \cite[Theorem 8.1]{Kr} that
$(\eta ^{\mu }-\underline{A}_{T})^{-1}$ exists for $\eta \in \Sigma $,
$|\eta |$ sufficiently large, and
\begin{equation*}
\|(\eta ^{\mu }-\underline{A}_{T})^{-1}\|_{{\mathscr{L}}({\mathcal{H}}_{2}^{0,0}({
\mathbb{D}}, E))}= O(\langle \eta \rangle ^{-\mu }).
\end{equation*}
Assumption (E3) implies that $\eta ^{\mu }-\widehat{A}_{\widehat{T}}$ is injective
on
${\mathscr{D}}({\widehat{A}_{\widehat{T} ,\min }}) =\mathcal{K}^{\mu ,\mu }_{2}(Y^{\wedge },E^{\wedge })_{\widehat{T}}$. Thus also the operator \reftext{\eqref{modelconeAT}} is injective. By \reftext{Proposition~\ref{rp}} we find
$B(\eta )\in C^{-\mu ,0}(\mathbb{D},\Sigma ;0,\mu )$ and
$G_{R}\in C_{G}^{0,0}({\mathbb{D}},\Sigma ;0,0)$ such that, for sufficiently
large $|\eta |$,
\begin{gather}
\label{fst}
B(\eta )=(\eta ^{\mu }-\underline{A}_{T})^{-1}+(\eta ^{\mu }-
\underline{A}_{T})^{-1}G_{R}(\eta ).
\end{gather}

Let $\widetilde{T}$ be the adjoint boundary condition for $A_{T}$ in the
sense of \cite[Definition 3.12]{CSS1}. Then
$(\underline{A}_{T})^{*} = \underline{A}^{t}_{\widetilde{T}}$, i.e. the adjoint
of $\underline{A}_{T}$ is a suitable realization of the formal adjoint
$A^{t}$ with boundary condition $\widetilde{T}$, see
\cite[Section 4]{CSS1}. Let us check that it also satisfies (E1) and (E2).

Clearly, parameter-ellipticity of the principal pseudodifferential symbol
holds for $A^{t}$. If we write the boundary symbol of $\binom{A}{T}$ as
$\binom{a^{0}}{t^{0}}$ and that of $\binom{A^{t}}{\widetilde{T}}$ as
$\binom{a^{t,0}}{{\tilde{t}}^{0}}$, then the boundary symbol realizations
in $L^{2}({\mathbb{R}}_{+})$ satisfy
$(a^{t,0})_{\tilde{t}^{0}}= ((a^{0})_{t^{0}})^{*}$, with a corresponding
relation for the rescaled symbols, see Grubb \cite[Theorem 1.6.9]{Gr}.
Therefore (E1) also holds for the adjoint.

Moreover, \cite[Corollary 7.3]{CSS1} applied with $\gamma =0$ and
$\gamma =\mu $ implies that the conormal symbol of
$\binom{A^{t}}{\widetilde{T}}(z)$ is invertible for
$\revtex z = \frac{n+1}{2}-\mu $ and $\revtex z = \frac{n+1}{2}$. Hence
(E2) holds for $\binom{A^{t}}{\widetilde{T}}$.

It follows from \cite[Proposition 7.2]{CSS1} that
$\widehat{\widetilde{T}}= \widetilde{\widehat{T}} $, since
$\widehat{T}$ and $\widehat{\widetilde{T}}$ are differential operators so
that the model cone boundary condition is determined by the conormal symbol.
Therefore $(\underline{\widehat{A}}_{\widehat{T}})^{*}$ is a realization
of ${\widehat{A^{t}}}$, subject to the boundary condition
$\widehat{\widetilde{T}}$. Since this realization has no spectrum in
$\Lambda \cap \{|\lambda |\ge R\}$, the operator
$\lambda - \widehat{A^{t}}_{\widehat{\widetilde{T}},\min }$ is injective
for large $\lambda \in \Lambda $, and thus, by homogeneity, for all
$\lambda \in \Lambda \setminus \{0\}$.

So we can apply once more \reftext{Proposition~\ref{rp}} and find
$\widetilde{B}(\eta )\in C^{-\mu ,0}(\mathbb{D},\Sigma ;0,\mu )$ and
$\widetilde{G}_{R}\in C_{G}^{0,0}(\mathbb{D},\Sigma ;0,0)$ such that
\begin{align*}
\widetilde{B}(\eta )=(\overline{\eta }^{\mu }-(\underline{A}_{T})^{*})^{-1}
+(\overline{\eta }^{\mu }-(\underline{A}_{T})^{*})^{-1}\widetilde{G}_{R}(
\eta ).
\end{align*}
Taking adjoints in the above equation we obtain
\begin{align*}
\widetilde{B}^{*}(\eta )=(\eta ^{\mu }-\underline{A}_{\widetilde{T}})^{-1}+
\widetilde{G}_{R}^{*}(\eta )(\eta ^{\mu }-\underline{A}_{T})^{-1}.
\end{align*}
Hence
\begin{equation*}
(\eta ^{\mu }-\underline{A}_{T})^{-1} = B(\eta ) + \widetilde{B}^{*}G_{R} -
\widetilde{G}_{R}^{*}(\eta )(\eta ^{\mu }-\underline{A}_{T})^{-1}G_{R}(
\eta ).
\end{equation*}
By the rules of the calculus,
$\widetilde{B}^{*}G_{R} \in C^{-\mu ,0}_{G}(\mathbb{D},\Sigma ;,0,0)$;
so it remains to show that
$ \widetilde{G}_{R}^{*}(\eta )(\eta -\underline{A}_{T})^{-1}G_{R}(\eta )
\in {\mathcal{C}}^{-\mu ,0}_{G}({\mathbb{D}},\Sigma ;0,0)$.

From the fact that
$D^{\alpha }_{\eta }(\eta ^{\mu }-\underline{A}_{T})^{-1}$ is a linear combination
of terms of the form
$p_{kl}(\eta ) (\eta ^{\mu }-\underline{A}_{T})^{-1-l}$, where
$k+l=|\alpha |$ and $p_{kl}$ is a polynomial of degree at most
$(\mu -1)l-k$, we conclude that, for
$\alpha \in {\mathbb{N}}_{0}^{2}$, there exists a constant
$C_{\alpha }$ with
\begin{equation*}
\|D^{\alpha }_{\eta }(\eta ^{\mu }-\underline{A}_{T})^{-1}\|_{{\mathscr{L}}({
\mathcal{H}}^{0,0}_{2}({\mathbb{D}},E))} \le C_{\alpha }[\eta ]^{-\mu -|
\alpha |}.
\end{equation*}
Since the group action $\kappa $ is unitary on
${\mathcal{K}}^{0,0}_{2}(Y^{\wedge },E^{\wedge })$, we see that, for any cut-off
function $\omega $,
\begin{equation*}
\omega (\eta ^{\mu }-\underline{A}_{T})^{-1}\omega \in S^{-\mu }(\Sigma ; {
\mathcal{K}}^{0,0}_{2}(Y^{\wedge },E^{\wedge }), {\mathcal{K}}^{0,0}_{2}(Y^{\wedge },E^{\wedge })).
\end{equation*}
Furthermore,
$\omega G_{R}(\eta ) \omega , \omega \widetilde{G}^{*}_{R}\omega \in
\mathcal{R}_{G}^{0,0}(Y^{\wedge },\Sigma ; 0,0)$, so that
\begin{equation*}
R(\eta ) :=\omega \widetilde{G}_{R}^{*}(\eta )\omega ^{2} (\eta ^{\mu }-
\underline{A}_{T})^{-1} \omega ^{2} G_{R}(\eta ) \omega \in S^{-\mu }(
\Sigma ; {\mathcal{K}}^{0,0}_{2} (Y^{\wedge }, E^{\wedge }), {\mathscr{S}}^{
\varepsilon }(Y^{\wedge }, E^{\wedge })).
\end{equation*}
By \reftext{Remark~\ref{rem:change_omega}} we may omit the two factors
$\omega ^{2}$ at the expense of modifying $R$ by an element of
$S^{-\infty }(\Sigma ; {\mathcal{K}}^{0,0}_{2} (Y^{\wedge }, E^{\wedge }), {
\mathscr{S}}^{\varepsilon }(Y^{\wedge }, E^{\wedge }))$. In particular, this
preserves the symbol class on the right-hand side. As
${\mathcal{R}}_{G}^{0,0}(Y^{\wedge }, \Sigma ; 0,0)$ is invariant under adjoints,
the argument applies also to the adjoints. So $R^{*}$ and its modification
belong to
$S^{-\mu }(\Sigma ; {\mathcal{K}}_{2}^{0,0} (Y^{\wedge }, E^{\wedge }), {
\mathscr{S}}^{\varepsilon }(Y^{\wedge }, E^{\wedge }))$, and \reftext{Proposition~\ref{prop:green}} implies that
\begin{equation*}
\omega \, \widetilde{G}_{R}^{*}\,(\eta ^{\mu }-\underline{A}_{T})^{-1} G_{R}
\,\omega \in {\mathcal{R}}_{G}^{-\mu ,0}(Y^{\wedge },\Sigma ; 0,0).
\end{equation*}
We conclude again from \reftext{Remark~\ref{rem:change_omega}} that
$ \widetilde{G}_{R}^{*}\,(\eta ^{\mu }-\underline{A}_{T})^{-1} G_{R} \in {
\mathcal{C}}_{G}^{-\mu ,0}({\mathbb{D}},\Sigma ;0,0)$, and the proof is complete.
\end{proof}

The following theorem is a consequence of \reftext{Theorem~\ref{Resolvent}} and
\cite[Theorem 4.1]{CSS0}, noting that the required holomorphicity of the
principal interior symbol is immediate from the fact that it arises as
the inverse of the principal symbol of $\eta ^{\mu }-A$. This has been shown
already in the proof of \cite[Theorem 5.4]{CSS0}.

\begin{theorem}%
\label{thm:main}
Under the assumptions \textup{(E1)-(E3)} there exists a constant $c>0$ such
that $c+\underline{A}_{T}$ has a bounded $H_{\infty }$-calculus on
${\mathcal{H}}^{0,\gamma }_{p}({\mathbb{D}},E)$ for all $1<p<+\infty $.
\end{theorem}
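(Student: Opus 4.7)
The plan is to deduce the statement by feeding the explicit resolvent representation of Theorem \ref{Resolvent} into the abstract $H_\infty$-calculus criterion of \cite[Theorem 4.1]{CSS0}. All the analytic work has already been done; what remains is to check that the structural hypotheses required by that criterion are met by the operator produced in Theorem \ref{Resolvent}.

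First, I would fix a sector $\Sigma = \Lambda_\theta$ as in \eqref{Lambda} and recall from Theorem \ref{Resolvent} that there is $R>0$ and an operator family
\[
C(\eta) \in C^{-\mu,0}(\D,\Sigma;\gamma,\gamma+\mu) + \cC_G^{-\mu,0}(\D,\Sigma;\gamma,\gamma)
\]
with $(\eta^\mu - \underline{A}_T)^{-1} = C(\eta)$ for $\eta\in\Sigma$ with $|\eta|\ge R$, while $\underline{A}_T$ has at most finitely many spectral points in $\Sigma$. Choosing $c>0$ sufficiently large shifts the spectrum so that $c+\underline{A}_T$ has no spectrum in $\Sigma$ at all and its resolvent still admits the same parametrix-plus-Green decomposition (the shift adds only a bounded, lower-order term). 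This reduces the theorem to verifying the hypotheses of \cite[Theorem 4.1]{CSS0} for $c+\underline{A}_T$.

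Next I would verify those hypotheses one by one. The resolvent set contains the sector $\Sigma$ (after the shift), and the $O(|\eta|^{-\mu})$ estimate on $(\eta^\mu - (c+\underline{A}_T))^{-1}$ in $\cH^{0,\gamma}_p$ follows from the symbol class membership \eqref{eq:inverse}: both summands are parameter-dependent operators of order $-\mu$, and symbol estimates in the edge/cone algebra translate into uniform operator-norm bounds of the required decay. The principal interior symbol of $\eta^\mu-A$ is, by construction, invertible on $\Sigma\setminus\{0\}$ by (E1); its inverse is therefore a holomorphic function of $\eta$ in a complex neighborhood of $\Sigma\setminus\{0\}$, which gives the holomorphicity of the principal interior symbol of the resolvent required in \cite{CSS0}, exactly as observed in the proof of \cite[Theorem 5.4]{CSS0}. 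The Green part causes no problem since it is of order $-\mu$ and class $0$, which is precisely the setting in which the abstract criterion is formulated.

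The hardest conceptual step is actually already absorbed into Theorem \ref{Resolvent}: one must recognize that the resolvent, which is a priori only known to be $O(|\eta|^{-\mu})$ in operator norm by Krainer's estimate, actually has the fine pseudodifferential-plus-Green structure needed to trigger the $H_\infty$-calculus machinery of \cite{CSS0}. Given that, the remaining obstacle here is a bookkeeping one: checking that the shift by $c$, the reduction to $\gamma=0$ via conjugation by $\tilde x^\gamma$ used throughout Section \ref{sec:hinfty}, and the $p$-independence (handled via Proposition \ref{prop:independent}) all fit together so that the bounded $H_\infty$-calculus obtained from \cite[Theorem 4.1]{CSS0} for the model weight indeed descends to $\cH^{0,\gamma}_p(\D,E)$ for every $1<p<\infty$. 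Once these compatibilities are noted, the conclusion follows directly from the cited abstract theorem.
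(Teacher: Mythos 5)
Your proposal is correct and follows essentially the same route as the paper: deduce the theorem by feeding the resolvent structure from Theorem \ref{Resolvent} into the abstract criterion of \cite[Theorem 4.1]{CSS0}, with the key additional observation that the holomorphicity of the principal interior symbol comes for free because it arises as the inverse of the principal symbol of $\eta^\mu-A$, as already noted in the proof of \cite[Theorem 5.4]{CSS0}. The paper's proof is one sentence long and leaves the shift by $c$ and the $p$-independence implicit (they are built into the mechanism of \cite[Theorem 4.1]{CSS0} once the resolvent lies in the stated symbol classes, without invoking Proposition \ref{prop:independent}), but your elaboration of these bookkeeping checks is accurate and does not change the argument.
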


\section{The Dirichlet and Neumann Laplacian}
\label{sec:Laplacian}

Given a metric that coincides with $dx^{2}+x^{2}h(x)$ on the collar part
$(0,1)\times Y$ of ${\mathbb{D}}$, the associated Laplacian is, on the collar
part,
\begin{equation*}
\Delta =x^{-2}\Big ((x\partial _{x})^{2}+(n-1+H(x))(x\partial _{x})+
\Delta _{Y}(x)\Big ),
\end{equation*}
where $\Delta _{Y}(x)$ is the Laplacian on $Y$ induced by $h(x)$ and
$2H(x) = x\partial _{x}(\log \det h(x))$.

We shall consider realizations subject to the Dirichlet boundary operator
$\gamma _{D}=\gamma _{0}$ and to the Neumann boundary operator
$\gamma _{N}=\gamma _{0} B_{1}$, where
$B_{1}=x^{-1}\partial _{\nu _{x}}$ in the sense of \reftext{\eqref{eq:A}} and
$\nu _{x}$ is a unit vector field in a collar-neighborhood of
$\partial Y$ that coincides on $\partial Y$ with the exterior normal with
respect to the metric $h(x)$. According to Green's formula, both
$\Delta _{D} := \Delta _{\gamma _{D}}$ and
$\Delta _{N} :=\Delta _{\gamma _{N}}$ are symmetric in
${\mathcal{H}}^{0,0}_{2}({\mathbb{D}})$. They have been described in
\cite{CSS1} in the special case of a metric that is constant in $x$, i.e.,
$h(x)\equiv h(0)$.

We write ${\mathcal{A}}_{D}=\binom{\Delta }{\gamma _{D}}$ and
${\mathcal{A}}_{N}=\binom{\Delta }{\gamma _{N}}$ and obtain the model cone
operators
\begin{equation*}
\widehat{\Delta }=x^{-2}\Big ((x\partial _{x})^{2}+(n-1)(x\partial _{x})+
\Delta _{Y}(0)\Big ),
\end{equation*}
$\widehat{{\mathcal{A}}}_{D}=
\binom{\widehat{\Delta }}{\widehat{\gamma }_{D}}$ and
$\widehat{\mathcal{A}}_{N}=
\binom{\widehat{\Delta }}{\widehat{\gamma }_{N}}$, with the Dirichlet and
Neumann boundary operators $\widehat{\gamma }_{D}$ and
$\widehat{\gamma }_{N}$, respectively, on $Y^{\wedge }$ equipped with the
metric $dx^{2}+x^{2}h(0)$. Both commute with multiplication by cut-off
functions, cf. \reftext{Remark~\ref{rem:RT}}.

The resulting principal conormal symbols are
\begin{equation*}
\mathbf{f}_{0,D/N}(z)=\sigma _{M}^{2}({\mathcal{A}}_{D/N})(z) =
\begin{pmatrix}
z^{2}-(n-1)z+\Delta _{Y}(0)
\\
\gamma _{D/N}%
\end{pmatrix}
.
\end{equation*}
Denoting by $\Delta _{Y,D/N}(0)$ the Dirichlet respectively Neumann realization
of the Laplacian on $Y$, let us now set, for $z\in {\mathbb{C}}$,
\begin{equation*}
f_{0}(z)=z^{2}-(n-1)z+\Delta _{Y}(0),\qquad f_{0,D/N}(z)=z^{2}-(n-1)z+
\Delta _{Y,D/N}(0).
\end{equation*}

\begin{lemma}%
\label{lem:inv}
Let $K_{D/N}$ be a right-inverse of $\gamma _{D/N}$ in Boutet de Monvel's
algebra for $Y$. For $z\in {\mathbb{C}}$,
\begin{equation*}
\mathbf{f}_{0,D}(z): H^{s+2}_{2}(Y)\longrightarrow H^{s}_{2}(Y)
\oplus H^{s+3/2}_{2}(\partial Y), \quad s>-3/2,
\end{equation*}
and
\begin{equation*}
\mathbf{f}_{0,N}(z): H^{s+2}_{2}(Y)\longrightarrow H^{s}_{2}(Y)
\oplus H^{s+1/2}_{2}(\partial Y) ,\quad s> -1/2,
\end{equation*}
respectively, are invertible if and only if
\begin{equation*}
f_{0,D/N}(z): H^{s+2}_{2}(Y)_{D/N} \longrightarrow H^{s}_{2}(Y)
\end{equation*}
are invertible; in this case
\begin{align*}
\mathbf{f}_{0,D/N}(z)^{-1} & =
\begin{pmatrix}
f_{0,D/N}(z)^{-1} & (1-f_{0,D/N}(z)^{-1}f_{0}(z))K_{D/N}
\end{pmatrix}
\\
& = f_{0,D/N}(z)^{-1}
\begin{pmatrix}
1 & -f_{0}(z)K_{D/N}
\end{pmatrix} +
\begin{pmatrix}
0&K_{D/N}
\end{pmatrix}.
\end{align*}
\end{lemma}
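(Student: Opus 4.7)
The plan is to reduce the full system $\bff_{0,D/N}(z)u = \binom{f}{g}$ with inhomogeneous boundary data to a realization problem with homogeneous boundary data, by means of the Poisson-type right-inverse $K_{D/N}$.

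First I would unpack the meaning of the equation: inverting $\bff_{0,D/N}(z)$ amounts to showing that, for every $f\in H^s_2(Y)$ and every $g$ in the appropriate boundary Sobolev space, there is a unique $u\in H^{s+2}_2(Y)$ with $f_0(z)u=f$ and $\gamma_{D/N}u=g$. Since $K_{D/N}$ is a Poisson operator in Boutet de Monvel's algebra (of order $0$ for Dirichlet and of order $-1$ for Neumann), it maps the indicated boundary Sobolev spaces continuously into $H^{s+2}_2(Y)$ and satisfies $\gamma_{D/N}K_{D/N}=1$; this is precisely what the Sobolev index matching in the statement reflects. Setting $u_0:=u-K_{D/N}g$, the original system becomes equivalent to finding $u_0\in H^{s+2}_2(Y)_{D/N}$ with
$$f_0(z)u_0 = f - f_0(z)K_{D/N}g.$$

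The equivalence of invertibility then follows at once. If $\bff_{0,D/N}(z)$ is invertible, restricting to $g=0$ yields the unique solvability of $f_{0,D/N}(z)u_0=f$ on $H^{s+2}_2(Y)_{D/N}$. Conversely, if $f_{0,D/N}(z)$ is invertible, the reduced equation always admits a unique solution $u_0$, and $u:=u_0+K_{D/N}g$ is then the unique solution of the original system (uniqueness because $(f,g)=0$ forces $u_0=0$ and $g=0$, hence $u=0$).

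Substituting back gives the explicit inverse: $u_0 = f_{0,D/N}(z)^{-1}\bigl(f-f_0(z)K_{D/N}g\bigr)$, so
$$u = f_{0,D/N}(z)^{-1}f + \bigl(1-f_{0,D/N}(z)^{-1}f_0(z)\bigr)K_{D/N}g,$$
which matches the first displayed formula in the lemma, the inverse being read as a row matrix acting on the column $\binom{f}{g}$. The second form is a trivial algebraic rearrangement. I do not anticipate any serious obstacle: the argument is a standard reduction in Boutet de Monvel's calculus. The only delicate point is verifying the mapping properties of $K_{D/N}$ and checking the thresholds $s>-3/2$ and $s>-1/2$, but these are exactly the ranges in which the trace maps $\gamma_0$ and $\gamma_0\partial_{\nu_0}$ are continuous from $H^{s+2}_2(Y)$ into the target boundary spaces, so everything is consistent.
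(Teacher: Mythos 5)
Your proposal is correct and follows essentially the same route as the paper: the substitution $u_0 = u - K_{D/N}g$ and the relation $\gamma_{D/N}K_{D/N}=1$ are exactly the content of the matrix identity $\binom{f_0(z)}{\gamma_{D/N}}\bigl(f_{0,D/N}(z)^{-1}\ \ K_{D/N}\bigr)=\begin{pmatrix}1&f_0(z)K_{D/N}\\0&1\end{pmatrix}$ that the paper uses, and the equivalence of invertibility (which the paper delegates to a cited surjectivity result) is what you verify by hand. No gap.
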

\begin{proof}
The first fact follows from the surjectivity of the boundary operators,
see \cite[Corollary 8.2]{CSS1}. The formula for the inverse then results
from the identity
\begin{equation*}
\begin{pmatrix}
f_{0}(z)
\\
\gamma _{D/N}%
\end{pmatrix}
\begin{pmatrix}
f_{0,D/N}(z)^{-1} & K_{D/N}%
\end{pmatrix}
=
\begin{pmatrix}
1&f_{0}(z)K_{D/N}
\\
0 & 1%
\end{pmatrix}
.\qedhere
\end{equation*}
\end{proof}

\begin{corollary}%
\label{cor:G_D/N}
In \reftext{Theorem~\textup{\ref{thm:theta}}}, applied to ${\mathcal{A}}_{D}$ or
${\mathcal{A}}_{N}$, the operators $\mathbf{G}^{(\ell )}_{\sigma }$ can be
substituted by the operators
\begin{equation*}
G^{(\ell )}_{\sigma ,D/N}:{\mathscr{S}}^{\infty }(Y^{\wedge })
\longrightarrow {\mathscr{C}}^{\infty }(Y^{\wedge })
\end{equation*}
defined by
\begin{equation}
\label{eq:gsigmal-D/N}
(G_{\sigma ,D/N}^{(\ell )} v)(x)= x^{\ell }\,\int _{|z-\sigma |=
\varepsilon } x^{-z}\mathbf{g}_{\ell ,D/N}(z)\,\Pi _{\sigma }(f_{0,D/N}^{-1}
\,\widehat{v})(z)\,\dbar z.
\end{equation}
\end{corollary}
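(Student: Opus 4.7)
The plan is to exploit the explicit inversion formula from Lemma~\ref{lem:inv} in order to reduce the pair-valued input of $\bfG_\sigma^{(\ell)}$ to a scalar input on $Y^\wedge$, without altering the range.

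The starting observation is that, for $u = (u_1,u_2) \in \sS^\infty(Y^\wedge)\oplus\sS^\infty(\partial Y^\wedge)$, Lemma~\ref{lem:inv} gives
\[
\bff_{0,D/N}^{-1}(z)\,\widehat u(z) = f_{0,D/N}^{-1}(z)\bigl(\widehat{u_1}(z) - f_0(z)\,K_{D/N}\widehat{u_2}(z)\bigr) + K_{D/N}\widehat{u_2}(z).
\]
Since $u_2\in\sS^\infty(\partial Y^\wedge)$, the Mellin transform $\widehat{u_2}$ is entire, and because $K_{D/N}$ is a $z$-independent operator from Boutet de Monvel's algebra on $Y$, the second summand above is entire as well; $\Pi_\sigma$ therefore annihilates it. The same formula shows that the poles of $\bff_{0,D/N}^{-1}$ coincide with those of $f_{0,D/N}^{-1}$, so the set $S_\gamma$ defined in \eqref{eq:strip} is unchanged.

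Next I would re-interpret the argument of $f_{0,D/N}^{-1}$. Because $f_0(z)=z^2-(n-1)z+\Delta_Y(0)$ is precisely the Mellin symbol of $x^2\wh\Delta$ under the correspondence $(-x\partial_x)^j\leftrightarrow z^j$, the function
\[
v:= u_1 - x^2\wh\Delta\,(K_{D/N}u_2)
\]
has Mellin transform $\widehat v(z) = \widehat{u_1}(z) - f_0(z)K_{D/N}\widehat{u_2}(z)$. Since $K_{D/N}$ acts fiberwise in $x$ and preserves smoothness and rapid decay at $0$ and $\infty$, $v\in\sS^\infty(Y^\wedge)$. Combining these two observations one obtains
\[
\Pi_\sigma(\bff_{0,D/N}^{-1}\widehat u)(z) = \Pi_\sigma(f_{0,D/N}^{-1}\widehat v)(z),
\]
and substitution into \eqref{eq:gsigmal} yields $\bfG_\sigma^{(\ell)}(u) = G_{\sigma,D/N}^{(\ell)}(v)$, where $\bfg_{\ell,D/N}$ is understood as the operator $\bfg_\ell$ from the recursion \eqref{eq:recursion} applied to $\cA_{D/N}$ (no reinterpretation is needed, since $\bfg_\ell$ already acts on scalar functions and only its scalar argument has been rewritten).

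The reverse inclusion is immediate: for any $v\in\sS^\infty(Y^\wedge)$, the choice $u=(v,0)$ makes $\bff_{0,D/N}^{-1}\widehat u = f_{0,D/N}^{-1}\widehat v$ and hence $\bfG_\sigma^{(\ell)}(v,0) = G_{\sigma,D/N}^{(\ell)}(v)$. Summing over $\ell=0,\ldots,\mu_\sigma$ and over $\sigma\in S_\gamma$ then proves that $\sF_\sigma = \mathrm{range}\,\sum_\ell G_{\sigma,D/N}^{(\ell)}$, i.e. the substitution is legitimate. The only delicate point is the identification of $f_0(z)K_{D/N}\widehat{u_2}(z)$ as the Mellin transform of an $\sS^\infty(Y^\wedge)$-function; this rests on the polynomial nature of $f_0$, the commutativity of $K_{D/N}$ with the Mellin transform in $x$, and the preservation of Schwartz-type behavior along the $x$-axis by both $K_{D/N}$ and $x^2\wh\Delta$. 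All other steps amount to bookkeeping with the formulas of Section~\ref{sec:binomAT}.
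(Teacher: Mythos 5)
Your proof is correct and follows essentially the same route as the paper: both invoke Lemma~\ref{lem:inv} to write $\bff_{0,D/N}^{-1}(z)$ as $f_{0,D/N}^{-1}(z)\bigl(1,\ -f_0(z)K_{D/N}\bigr)$ plus a holomorphic term, observe that $\Pi_\sigma$ kills the latter, and then identify $\widehat{u_1}(z)-f_0(z)K_{D/N}\widehat{u_2}(z)$ as the Mellin transform of $v=u_1-f_0(-x\partial_x)K_{D/N}u_2\in\sS^\infty(Y^\wedge)$ (which you equivalently write as $u_1-x^2\widehat\Delta\,K_{D/N}u_2$). The only difference is cosmetic: you make the surjectivity via $u=(v,0)$ and the smoothness/decay justification for $v$ fully explicit, whereas the paper leaves these to the reader.
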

\begin{proof}
By \reftext{Lemma~\ref{lem:inv}},
$\mathbf{f}_{0,D/N}(z)^{-1}\equiv f_{0,D/N}(z)^{-1}
\begin{pmatrix}
1 & -f_{0}(z)K_{D/N}%
\end{pmatrix}
$ modulo holomorphic functions. Now let
$u=(u_{1},u_{2})\in {\mathscr{S}}^{\infty }(Y^{\wedge })\oplus {\mathscr{S}}^{\infty }(\partial Y^{\wedge })$. Then
\begin{align*}
\begin{pmatrix}
1 & -f_{0}(z)K_{D/N}
\end{pmatrix}
\begin{pmatrix}
\widehat{u}_{1}(z)
\\
\widehat{u}_{2}(z)
\end{pmatrix} =\widehat{u}_{1}(z)-(f_{0}(-x\partial _{x})K_{D/N}u_{2})
\,\widehat{}\,(z)=\widehat{v}(z)
\end{align*}
with
$v(x):=u_{1}(x)-f_{0}(-x\partial _{x})K_{D/N}u_{2}(x)\in {\mathscr{S}}^{\infty }(Y^{\wedge })$. Hence the range of
$\mathbf{G}^{(\ell )}_{\sigma }$ coincides with that of
$G^{(\ell )}_{\sigma ,D/N}$.
\end{proof}

\subsection{Extensions on the model cone}
\label{sec6.1}

Let $\lambda ^{D/N}_{j}$, $j\in \mathbb{N}_{0}$, be the eigenvalues of
the Dirichlet and Neumann Laplacian on $Y$ with repect to $h(0)$, respectively.
Assuming that $\mathbb{D}$ is connected and has non-empty boundary, we
have $\lambda ^{D}_{0}<0$ for the Dirichlet case while
$\lambda ^{N}_{0}=0$ for the Neumann case. Then $f_{0,D/N}(z)$ is invertible
for all $z$ except for the values
\begin{equation}
\label{eq:poles}
q_{j,D/N}^{\pm }:=\frac{n-1}{2}\pm \sqrt{\Big (\frac{n-1}{2}\Big )^{2}-
\lambda ^{D/N}_{j}}, \quad j\in \mathbb{N}_{0}.
\end{equation}
Note the relation $q_{j,D/N}^{-}+q_{j,D/N}^{+}=n-1$.

Let $E^{D/N}_{j}$ denote the eigenspace associated with
$\lambda ^{D/N}_{j}$ and $\pi _{j,D/N}$ the $L^{2}$-orthogonal projection
onto $E^{D/N}_{j}$. Then, in case $n\ge 2$,
\begin{equation*}
f_{0,D/N}^{-1}(z)=\sum _{j=0}^{\infty
}\frac{1}{q_{j,D/N}^{+}-q_{j,D/N}^{-}} \Big (\frac{1}{z-q_{j,D/N}^{+}}-
\frac{1}{z-q_{j,D/N}^{-}}\Big )\pi _{j,D/N},
\end{equation*}
hence
\begin{eqnarray}
\label{eq:Pi}
\Pi _{q_{j,D/N}^{\pm }}f_{0,D/N}^{-1}(z) =\pm
\frac{\pi _{j,D/N}}{q_{j,D/N}^{+}-q_{j,D/N}^{-}}\,(z-q_{j,D/N}^{\pm })^{-1}.
\end{eqnarray}
In case $n=1$ this holds also true in the Dirichlet case, and in the Neumann
case whenever $j\ge 1$. Moreover, in case $n=1$,
$q_{0,N}^{+}=q_{0,N}^{-}=0$ is a double pole and
\begin{equation*}
\Pi _{0}f_{0,N}^{-1}(z)=\pi _{0} z^{-2}.
\end{equation*}

\begin{definition}%
\label{eq:DN-spaces}
Let $E^{D/N}_{j}$ be the eigenspace associated with
$\lambda ^{D/N}_{j}$. Define
\begin{equation*}
\widehat{{\mathscr{E}}}_{q_{j,D/N}^{\pm }}=E^{D/N}_{j}\otimes x^{-q_{j,D/N}^{
\pm }} =\Big \{e(y)x^{-q_{j,D/N}^{\pm }}\mid e\in E^{D/N}_{j}\Big \},
\qquad j\in {\mathbb{N}}_{0}.
\end{equation*}
unless $n=1$, $j=0$ and we have Neumann boundary conditions; then we set
\begin{equation*}
\widehat{{\mathscr{E}}}_{0,N}=E^{N}_{0}+E^{N}_{0}\otimes \log x.
\end{equation*}
\end{definition}

For $\gamma \in \mathbb{R}$ define the set
\begin{equation}
\label{eq:igamma}
I_{\gamma ,D/N}=\Big \{q_{j,D/N}^{\pm }\mid j \in \mathbb{N}_{0}\Big \}
\cap \Big (\frac{n+1}{2}-\gamma -2,\frac{n+1}{2}-\gamma \Big ).
\end{equation}

By \reftext{Theorem~\ref{thm:Dmaxhat}}, \reftext{Corollary~\ref{cor:G_D/N}} and straight-forward
calculations using the residue theorem we obtain:

\begin{proposition}%
\label{prop:5.1}
The maximal extension of $\widehat{\Delta }_{D/N}$ in
${\mathcal{K}}^{0,\gamma }_{p}(Y^{\wedge })$, $\gamma \in {\mathbb{R}}$, subject
to Dirichlet/Neumann boundary conditions has the domain
\begin{equation*}
{\mathscr{D}}_{\max }(\widehat{\Delta }_{D/N})={\mathscr{D}}_{\min }(
\widehat{\Delta }_{D/N}) \oplus \omega \widehat{{\mathscr{E}}}_{D/N}^{
\gamma },\qquad \widehat{{\mathscr{E}}}^{\gamma }_{D/N}=\mathop{
\text{\Large $\oplus $}}_{q\in I_{\gamma ,D/N}} \widehat{{\mathscr{E}}}_{q,D/N}.
\end{equation*}
In case $\frac{n+1}{2}-\gamma -2\not =q^{\pm }_{j,D/N}$ for every $j$, the
minimal domain coincides with
${\mathcal{K}}^{2, \gamma +2}_{p} (Y^{\wedge })_{D/N}$.
\end{proposition}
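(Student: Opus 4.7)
The plan is to specialize the general description of Theorem \ref{thm:Dmaxhat} together with Corollary \ref{cor:G_D/N} to the Laplacian, reducing the abstract ranges $\mathrm{range}\,\bfG^{(0)}_\sigma$ to explicit subspaces through residue calculations based on the spectral decomposition of $\Delta_Y(0)$.

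First I would apply Theorem \ref{thm:Dmaxhat} to the model boundary value problem $\wh\cA_{D/N}$ and transfer the result to $\widehat\Delta_{D/N}$ via the model-cone decomposition \eqref{eq:complement-hat}. A key simplification comes from Remark \ref{rem:RT}: since the Dirichlet and Neumann boundary operators commute with cut-off functions, the corrective projection $\wh R\wh T$ disappears and $\wh\sE = \omega\wh\sF$. This gives
$$\sD_{\max}(\widehat\Delta_{D/N}) = \sD_{\min}(\widehat\Delta_{D/N}) \oplus \omega \bigoplus_{\sigma \in S_\gamma} \mathrm{range}\,\bfG^{(0)}_\sigma.$$

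Next I would use Corollary \ref{cor:G_D/N} to replace each $\bfG^{(0)}_\sigma$ by the scalar operator $G^{(0)}_{\sigma,D/N}$, whose integrand involves $f_{0,D/N}^{-1}$ rather than $\bff_{0,D/N}^{-1}$. Since $\bfg_0 = 1$ by \eqref{eq:recursion}, the formula reduces to
$$(G^{(0)}_{\sigma,D/N}v)(x) = \int_{|z-\sigma|=\varepsilon} x^{-z}\,\Pi_\sigma\bigl(f_{0,D/N}^{-1}\wh v\bigr)(z)\,\dbar z.$$
The poles of $f_{0,D/N}^{-1}$ are exactly the exponents $q^\pm_{j,D/N}$ from \eqref{eq:poles}, identifying $S_\gamma$ with $I_{\gamma,D/N}$. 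For a simple pole $\sigma = q^\pm_{j,D/N}$, formula \eqref{eq:Pi} together with the residue theorem produces a scalar multiple of $\pi_{j,D/N}\wh v(\sigma)\,x^{-\sigma}$; as $v$ varies over $\sS^\infty(Y^\wedge)$ the factor $\pi_{j,D/N}\wh v(\sigma)$ sweeps out all of $E^{D/N}_j$, so the range coincides with $E^{D/N}_j\otimes x^{-\sigma} = \wh\sE_{\sigma,D/N}$.

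The main technical point is the exceptional case $n=1$ with Neumann condition and $\sigma = q^\pm_{0,N} = 0$, where the pole has order two. Here $\Pi_0(f_{0,N}^{-1}\wh v)(z) = \pi_0\bigl(z^{-2}\wh v(0) + z^{-1}\wh v'(0)\bigr)$, and the residue identities $\int x^{-z}z^{-1}\dbar z = 1$, $\int x^{-z}z^{-2}\dbar z = -\log x$ yield precisely $E^N_0 + E^N_0\otimes\log x = \wh\sE_{0,N}$. Summing the ranges over $\sigma \in I_{\gamma,D/N}$ gives the first assertion. For the minimal domain, the hypothesis $\tfrac{n+1}{2}-\gamma-2 \neq q^\pm_{j,D/N}$ for every $j$ says that $\bff_{0,D/N}$ has no pole on the line $\re z = \tfrac{n+1}{2}-(\gamma+2)$, i.e., $\cA_{D/N}$ is elliptic with respect to the weight $\gamma+\mu$ in the sense of Definition \ref{def:elliptic}; identity \eqref{eq:Dmin2} then gives $\sD_{\min}(\widehat\Delta_{D/N}) = \cK^{2,\gamma+2}_p(Y^\wedge)_{D/N}$.
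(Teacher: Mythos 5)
Your proof plan is correct and follows exactly the route the paper indicates (the paper merely says ``By Theorem \ref{thm:Dmaxhat}, Corollary \ref{cor:G_D/N} and straight-forward calculations using the residue theorem'' and leaves the details to the reader). You have correctly invoked Remark \ref{rem:RT} to eliminate the $\wh R\wh T$ correction, identified $S_\gamma$ with $I_{\gamma,D/N}$, carried out the residue computation for simple and double poles, and used \eqref{eq:Dmin2} for the minimal-domain statement; note as a small aside that your sign $\int x^{-z}z^{-2}\,\dbar z=-\log x$ is the correct one (the paper's later computation in Section 6.2 has an apparent sign typo writing $\log x$), though since the range is unchanged this does not affect anything.
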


The description of the adjoints of closed extensions makes use of the bilinear
form
\begin{equation}
\label{eq:pairing}
[u,v]_{D/N} :=(\widehat{\Delta }_{D/N}(\omega u),\omega v)_{{
\mathcal{K}}^{0,0}_{2}(Y^{\wedge })} -(\omega u,\widehat{\Delta }_{D/N}(
\omega v))_{{\mathcal{K}}^{0,0}_{2}(Y^{\wedge })},
\end{equation}
which is non-degenerate as a map
\begin{equation*}
\widehat{{\mathscr{E}}}^{\gamma }_{D/N}\times \widehat{{\mathscr{E}}}^{-
\gamma }_{D/N}\longrightarrow {\mathbb{C}};
\end{equation*}
it does not depend on the choice of the cut-off function $\omega $.

The result below, is an analog of \cite[Proposition 6.3]{CSS1}:

\begin{lemma}%
\label{lem:adjoint}
Let $\widehat{\underline{\Delta }}_{D/N}$ be an extension in
${\mathcal{K}}^{0,\gamma }_{p}(Y^{\wedge })$ with domain
\begin{equation*}
{\mathscr{D}}(\widehat{\underline{\Delta }}_{D/N})={\mathscr{D}}_{\min }(
\widehat{\Delta }_{D/N}) \oplus \omega
\underline{\widehat{{\mathscr{E}}}}_{D/N}^{\gamma }.
\end{equation*}
Then its adjoint, considered as an unbounded operator in
${\mathcal{K}}^{0,-\gamma }_{p^{\prime }}(Y^{\wedge })$, is the Laplacian acting
on the domain
\begin{equation*}
{\mathscr{D}}(\widehat{\underline{\Delta }}_{D/N})={\mathscr{D}}_{\min }(
\widehat{\Delta }_{D/N}) \oplus \omega
\underline{\widehat{{\mathscr{E}}}}_{D/N}^{\gamma ,\#},
\end{equation*}
where $\underline{\widehat{{\mathscr{E}}}}_{D/N}^{\gamma ,\#}$ is the orthogonal
space to $\underline{\widehat{{\mathscr{E}}}}^{\gamma }_{D/N}$ with respect
to the pairing \reftext{\eqref{eq:pairing}}.
\end{lemma}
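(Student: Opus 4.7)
The plan is to follow the template of \cite[Proposition 6.3]{CSS1}, adapting it to the present Dirichlet/Neumann setting on the model cone.

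First, Green's formula on $Y^\wedge$ with the warped metric $dx^2+x^2h(0)$ shows that $\wh\Delta$ is symmetric on $\sC^{\infty,\infty}$-sections satisfying $\wh\gamma_{D/N}u=0$, the tip contributions being killed by the rapid vanishing at $x=0$. Combining this with the duality between $\cK^{0,\gamma}_p(Y^\wedge)$ and $\cK^{0,-\gamma}_{p'}(Y^\wedge)$ induced by the $\cK^{0,0}_2$-inner product, I would deduce
\[
(\wh\Delta_{D/N,\min})^* = \wh\Delta_{D/N,\max}, \qquad (\wh\Delta_{D/N,\max})^* = \wh\Delta_{D/N,\min},
\]
with the adjoints acting in $\cK^{0,-\gamma}_{p'}(Y^\wedge)$.

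Since $\wh{\underline\Delta}_{D/N}$ is closed and sandwiched between $\wh\Delta_{D/N,\min}$ and $\wh\Delta_{D/N,\max}$, its adjoint is sandwiched in the same way. By Proposition~\ref{prop:5.1} applied with weight $-\gamma$, the adjoint's domain therefore has the form $\sD_{\min}(\wh\Delta_{D/N})\oplus\omega\underline V$ for some subspace $\underline V\subseteq \wh\sE^{-\gamma}_{D/N}$.

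To pin down $\underline V$, I would unfold the defining relation: $v=v_0+\omega v_1$, with $v_0\in\sD_{\min}(\wh\Delta_{D/N})$ and $v_1\in\wh\sE^{-\gamma}_{D/N}$, lies in the adjoint's domain iff
\[
(\wh\Delta u,v)_{\cK^{0,0}_2}-(u,\wh\Delta v)_{\cK^{0,0}_2}=0
\]
for every $u=u_0+\omega u_1\in \sD(\wh{\underline\Delta}_{D/N})$. Of the four bilinear contributions coming from the decomposition of $u$ and $v$, the three involving at least one minimal-domain element vanish, so that only the pairing $[u_1,v_1]_{D/N}$ of \eqref{eq:pairing} survives. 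The condition then becomes $[u_1,v_1]_{D/N}=0$ for every $u_1\in\underline{\wh\sE}^{\gamma}_{D/N}$, which by the non-degeneracy of the pairing is equivalent to $v_1\in\underline{\wh\sE}^{\gamma,\#}_{D/N}$, yielding the claim.

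The main obstacle is precisely the cancellation of these three mixed contributions, which amounts to showing that $(\wh\Delta u,w)-(u,\wh\Delta w)=0$ whenever $u\in\sD_{\max}(\wh\Delta_{D/N})$ and $w\in\sD_{\min}(\wh\Delta_{D/N})$. This follows by approximating $w$ in the graph norm by $\sC^{\infty,\infty}$-sections with compact support in $x$ and satisfying the D/N boundary condition, applying Green's formula (the lateral-boundary terms cancel because of the boundary condition, and the tip contributions tend to zero thanks to the weight gain $2-\varepsilon$ for $\sD_{\min}$ from Theorem~\ref{thm:Dmin}), and passing to the limit using continuity of $\wh\Delta$ on the relevant $\cK$-spaces.
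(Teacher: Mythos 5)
Your proposal is essentially correct and follows the CSS1-based template that the paper itself invokes: the paper gives no proof of Lemma~\ref{lem:adjoint}, merely stating that it is an analog of \cite[Proposition~6.3]{CSS1}, and your argument (pairing-based characterization of the adjoint domain after reducing modulo $\sD_{\min}$, plus vanishing of the mixed terms) is exactly the shape of that reference. One point deserves to be made explicit: the identity $(\wh\Delta_{D/N,\min})^* = \wh\Delta_{D/N,\max}$ is \emph{not} a formal consequence of Green's formula and duality. Green's formula only yields the inclusion $\wh\Delta_{D/N,\max}\subseteq(\wh\Delta_{D/N,\min})^*$; the reverse inclusion is an elliptic-regularity statement, since $\sD_{\max}$ as defined here already carries the a-priori requirements $u\in\cK^{2,-\gamma}_{p'}$ and $\wh\gamma_{D/N}u=0$, and one must show that every element of the adjoint domain has this regularity and satisfies the boundary condition. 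This is the content of \cite[Theorem~4.6]{CSS1} (cf.\ also \cite[Lemma~4.10]{Kr}), which the present paper alludes to in Proposition~\ref{prop:SI_for_model_cone}; once you cite it rather than ``deduce'' it from Green's formula, the rest of your sketch — Proposition~\ref{prop:5.1} to pin the adjoint domain between $\sD_{\min}$ and $\sD_{\max}$ in weight $-\gamma$, then the four-term decomposition with the three mixed terms vanishing by the minimal-domain weight gain $2-\varepsilon$ and Green's formula, leaving $[u_1,v_1]_{D/N}=0$ as the defining condition — is sound and complete.
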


\subsection{Extensions with property (E3)}
\label{sec6.2}

Of the three ellipticity conditions (E1), (E2) and (E3), generally the
last one is the most difficult to check. \reftext{Theorem~\ref{thm:5.3}}, below,
gives a simple sufficient condition. We focus on extensions
$\widehat{\underline{\Delta }}_{D/N}$ of $\widehat{\Delta }_{D/N}$ in
${\mathcal{K}}^{0,\gamma }_{2}(Y^{\wedge })$ with domain of the form
\begin{equation}
\label{eq:domain}
{\mathscr{D}}(\widehat{\underline{\Delta }}_{D/N}) = {\mathscr{D}}_{\min }(
\widehat{\Delta }_{D/N}) \oplus \omega
\underline{\widehat{{\mathscr{E}}}}_{D/N}^{\gamma },\qquad
\underline{\widehat{{\mathscr{E}}}}_{D/N}^{\gamma }=\mathop{
\text{\Large $\oplus $}}_{q\in I_{\gamma ,D/N}}
\underline{\widehat{{\mathscr{E}}}}_{q, D/N},
\end{equation}
where $\underline{\widehat{{\mathscr{E}}}}_{q,D/N}$ is an arbitrary subspace
of $\widehat{{\mathscr{E}}}_{q,D/N}$, except in case of the Neumann condition
and $n=1$, where for $q=0$ we confine ourselves to the following three
choices: $\underline{\widehat{{\mathscr{E}}}}_{0,N}=\{0\}$,
$\underline{\widehat{{\mathscr{E}}}}_{0,N}=E_{0}^{N}\otimes 1$, or
$\underline{\widehat{{\mathscr{E}}}}_{0,N}=\widehat{{\mathscr{E}}}_{0,N}$.

Let
$\underline{\widehat{{\mathscr{E}}}}_{q_{j}^{\pm }, D/N}=\underline{E}^{D/N}_{j}
\otimes x^{-q_{j,D/N}^{\pm }}$ with a subspace
$\underline{E}^{D/N}_{j}$ of $E^{D/N}_{j}$, cf. \reftext{Definition~\ref{eq:DN-spaces}}. We define
\begin{equation*}
\underline{\widehat{{\mathscr{E}}}}_{q_{j,D/N}^{\pm }}^{\perp }:=
\underline{E}^{D/N,\perp }_{j}\otimes x^{-q_{j,D/N}^{\mp }} \subseteq
\widehat{{\mathscr{E}}}_{q_{j,D/N}^{\mp }}
\end{equation*}
(note the sign change), where $\underline{E}^{D/N,\perp }_{j}$ is the orthogonal
complement in $E^{D/N}_{j}$ with respect to the $L^{2}(Y)$ inner product,
with the only exception for
$\underline{\widehat{{\mathscr{E}}}}_{0,N}=E_{0}^{N}\otimes 1$ in case
$n=1$, where instead we define
$\underline{\widehat{{\mathscr{E}}}}_{0,N}^{\perp }:=
\underline{\widehat{{\mathscr{E}}}}_{0,N}$.

\begin{lemma}%
\label{lem:orthogonal}
If $\underline{\widehat{{\mathscr{E}}}}_{D/N}^{\gamma }$ is as in \reftext{\eqref{eq:domain}} then
$\displaystyle \underline{\widehat{{\mathscr{E}}}}_{D/N}^{\gamma ,\#} =
\mathop{\text{\Large $\oplus $}}_{q\in I_{\gamma ,D/N}}
\underline{\widehat{{\mathscr{E}}}}_{q, D/N}^{\perp }$.
\end{lemma}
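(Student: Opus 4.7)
The plan is to use Green's formula together with eigenspace orthogonality on $Y$ to show that the pairing $[\cdot,\cdot]_{D/N}$ is block-diagonal with respect to the decomposition into the spaces $\widehat\sE_{q,D/N}$, and then to read off the orthogonal complement block by block.

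First I compute $[u,v]_{D/N}$ on basis vectors $u=e(y)x^{-q}\in \widehat\sE_{q,D/N}$, $e\in E_j^{D/N}$, $q=q_{j,D/N}^\pm$, and $v=f(y)x^{-q'}\in \widehat\sE_{q',D/N}$, $f\in E_{j'}^{D/N}$. A direct calculation using $(x\partial_x)^k x^{-q}=(-q)^k x^{-q}$ and $\Delta_Y(0)e=\lambda_j^{D/N}e$ yields $\wh\Delta u=\wh\Delta v=0$; hence $\wh\Delta(\omega u)=[\wh\Delta,\omega]u$ is supported away from the tip. Writing $\wh\Delta=x^{-n}\partial_x(x^n\partial_x)+x^{-2}\Delta_Y(0)$ and applying Green's formula on $(\epsilon,\infty)\times Y$, then sending $\epsilon\to 0^+$, the $\partial Y$-boundary contribution vanishes by the Dirichlet or Neumann condition on $e$ and $f$, and the surviving radial term at $x=\epsilon$ evaluates to
$$[u,v]_{D/N}=\lim_{\epsilon\to 0^+}(q-q')\,(e,f)_{L^2(Y,h(0))}\,\epsilon^{n-q-q'-1}.$$
Because the paper's sign convention forces $\lambda_j^{D/N}\le 0$, the exponents $q_{j,D/N}^\pm$ are real and satisfy $q_j^++q_j^-=n-1$.

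This formula exhibits the block-diagonality: $[u,v]_{D/N}$ vanishes unless $j=j'$ (by $L^2(Y)$-eigenspace orthogonality) and $q'=q_{j,D/N}^\mp$ (so that the exponent becomes zero and the prefactor $q_j^+-q_j^-$ is nonzero), in which case it equals a nonzero multiple of $(e,f)_{L^2(Y,h(0))}$. Note that the apparently divergent cases $q+q'>n-1$ only arise for $j\neq j'$, where $(e,f)_{L^2(Y)}=0$, so the limit is in fact $0$. For the exceptional log case ($n=1$, Neumann, $q=0$), a separate direct computation with $u=e_1+e_2\log x$, $v=f_1+f_2\log x$ yields the symplectic-type pairing $[u,v]_N=(e_1,f_2)_{L^2(Y)}-(e_2,f_1)_{L^2(Y)}$, for which $E_0^N\otimes 1$ is its own annihilator, consistent with the paper's special convention $\underline{\widehat\sE}_{0,N}^\perp:=\underline{\widehat\sE}_{0,N}$.

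The block-diagonality now reduces the lemma to a per-block computation: on $\widehat\sE_{q_j^+,D/N}\times \widehat\sE_{q_j^-,D/N}\cong E_j^{D/N}\times E_j^{D/N}$ the pairing is a nonzero scalar multiple of the $L^2(Y,h(0))$-inner product, so the orthogonal complement of $\underline E_j^{D/N}\otimes x^{-q_j^\pm}$ is precisely $\underline E_j^{D/N,\perp}\otimes x^{-q_j^\mp}=\underline{\widehat\sE}_{q_j^\pm,D/N}^\perp$. Summing over $q\in I_{\gamma,D/N}$ yields the claimed decomposition of $\underline{\widehat\sE}_{D/N}^{\gamma,\#}$. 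The main technical obstacle is the Green's-formula identification of the boundary term at $x=\epsilon$ and the subsequent bookkeeping that shows every potentially divergent limit is in fact killed, either by the prefactor $q-q'$ vanishing or by $L^2(Y)$-orthogonality of distinct eigenspaces; the relation $q_j^++q_j^-=n-1$ is exactly what singles out the paired exponents as the unique source of nontrivial contribution.
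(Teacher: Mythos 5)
Your proof is correct and takes essentially the same route as the paper: compute the pairing $[\cdot,\cdot]_{D/N}$ on the generators to establish block-diagonality relative to the decomposition $\widehat\sE_q\leftrightarrow\widehat\sE_{n-1-q}$, then read off the annihilator block by block; your Green's formula evaluation at $x=\epsilon$ is just a sharpened version of the paper's direct integration of the boundary contribution against $\partial_x\omega^2$, and your treatment of the $n=1$ Neumann log block via the symplectic pairing $(e_1,f_2)-(e_2,f_1)$ matches the paper's. One small inaccuracy: your claim that the divergent exponents $q+q'>n-1$ occur \emph{only} for $j\ne j'$ is not quite right, since $j=j'$ with $q=q'\ne(n-1)/2$ also gives $q+q'>n-1$ (e.g.\ $n=2$, Neumann, $q=q'=q_0^+=1$, $|\gamma|<1/2$); in that case the limit is still $0$, but because the prefactor $q-q'$ vanishes identically, not because of eigenspace orthogonality.
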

\begin{proof}
The result is based on the description of adjoint operators in
\cite[Section 6.3]{CSS1}. We shall focus on the Neumann case with
$n=1$; this is the most involved case, since then
$q_{0,N}^{+}=q_{0,N}^{-}=0$ is a double pole of the conormal symbol. The
other cases are treated analogously.

Let us write $q_{j}^{\pm }:=q_{j,N}^{\pm }$. Since $n=1$, we have
$I_{\gamma }:=I_{\gamma ,N} =(-1-\gamma ,1-\gamma )\cap \{q_{j}^{\pm
}\mid j\ge 0\}$ and $q_{j}^{-}=-q_{j}^{+}$ for all $j$. By symmetry it is
enough to consider the case $0\le \gamma <1$.

\medskip\noindent
\textbf{Step 1:} Let $e_{j},f_{j}\in E_{j}^{N}$, $e_{k}\in E_{k}^{N}$, be
arbitrary. A direct calculation yields
\begin{equation*}
[ e_{j} x^{-q_{j}^{\pm }}, e_{k} x^{-q_{k}^{\pm }}] ={-}(q_{j}^{\pm }-q_{k}^{\pm })(e_{j},e_{k})_{L^{2}(Y)} \int _{0}^{+\infty }\partial _{x}\omega ^{2}(x)
\,x^{-q_{k}^{\pm }-q_{j}^{\pm }}\,dx.
\end{equation*}
The second factor on the right-hand side equals zero whenever
$j\not =k$. In case $j=k$,
\begin{equation*}
[e_{j} x^{-q_{j}^{+}},f_{j} x^{-q_{j}^{+}}] =[e_{j} x^{-q_{j}^{-}},f_{j}
x^{-q_{j}^{-}}]=0
\end{equation*}
as well as
\begin{align*}
[e_{j} x^{-q_{j}^{+}},&f_{j} x^{-q_{j}^{-}}] =-
\overline{[e_{j} x^{-q_{j}^{-}},f_{j} x^{-q_{j}^{+}}]}
\\
& ={-}(q_{j}^{+}-q_{j}^{-})(e_{j},f_{j})_{L^{2}(Y)}\int _{0}^{+\infty }
\partial _{x}\omega ^{2}(x)\,dx =(q_{j}^{-}-q_{j}^{+})(e_{j},f_{j})_{L^{2}(Y)}.
\end{align*}
For $e,f\in E_{0}^{N}$, $e_{j}\in E_{j}^{N}$, $j>0$, one obtains
\begin{equation*}
[e,f]= [e\log x,f\log x] =[e,e_{j} x^{-q_{j}^{\pm }}]=[e\log x,e_{j} x^{-q_{j}^{\pm }}]=0
\end{equation*}
and
\begin{equation*}
[e,f\log x]=-(e,f)_{L^{2}(Y)}\int _{0}^{+\infty }\partial _{x}\omega ^{2}(x)
\, dx =(e,f)_{L^{2}(Y)}.
\end{equation*}
\textbf{Step 2:} Let $q\in I_{\gamma }\setminus I_{-\gamma }$. Then
$q=q_{j}^{-}$ for some $j>0$. From the above calculations for the pairing
it follows that
\begin{equation*}
\underline{\widehat{{\mathscr{E}}}}_{q}^{\#}=
\underline{\widehat{{\mathscr{E}}}}_{q}^{\perp }\oplus \mathop{
\text{\Large $\oplus $}}_{\substack{p\in I_{-\gamma },\\ p\neq-q}}
\widehat{{\mathscr{E}}}_{p}.
\end{equation*}
Consequently, as the orthogonal complement of a sum of spaces is the intersection
of all respective orthogonal complements, we obtain
\begin{align}
\label{eq:compl01}
\Big (\mathop{\text{\Large $\oplus $}}_{q\in I_{\gamma }\setminus I_{-
\gamma }}\underline{\widehat{{\mathscr{E}}}}_{q}\Big )^{\#} =\mathop{
\text{\Large $\oplus $}}_{\substack{p\in I_{\gamma }\cap I_{-\gamma }}}
\widehat{{\mathscr{E}}}_{p} \oplus \mathop{\text{\Large $\oplus $}}_{q
\in I_{\gamma }\setminus I_{-\gamma }}
\underline{\widehat{{\mathscr{E}}}}_{q}^{\perp }.
\end{align}
Now let $q\in I_{\gamma }\cap I_{-\gamma }$. Then also
$-q\in I_{\gamma }\cap I_{-\gamma }$. If $q\neq0$, the above calculations
yield that
\begin{align*}
(\underline{\widehat{{\mathscr{E}}}}_{q}\oplus
\underline{\widehat{{\mathscr{E}}}}_{-q})^{\#} &=
\underline{\widehat{{\mathscr{E}}}}_{q}^{\perp }\oplus
\underline{\widehat{{\mathscr{E}}}}_{-q}^{\perp }\mathop{
\text{\Large $\oplus $}}_{
\substack{p\in I_{-\gamma },\\ p\notin \{-q,q\}}}
\widehat{{\mathscr{E}}}_{p},
\end{align*}
while for $q=0$, due to our choices of
$\underline{\widehat{{\mathscr{E}}}}_{0}$, we find
\begin{equation*}
\underline{\widehat{{\mathscr{E}}}}_{0}^{\#} =
\underline{\widehat{{\mathscr{E}}}}_{0}^{\perp }\oplus \mathop{
\text{\Large $\oplus $}}_{\substack{p\in I_{-\gamma },\\ p\neq0}}
\widehat{{\mathscr{E}}}_{p}.
\end{equation*}
It follows that
\begin{align}
\label{eq:compl02}
\Big (\underline{\widehat{{\mathscr{E}}}}_{0}\oplus \mathop{
\text{\Large $\oplus $}}_{
\substack{q\in I_{\gamma }\cap I_{-\gamma },\\q<0}}
\underline{\widehat{{\mathscr{E}}}}_{q}\oplus
\underline{\widehat{{\mathscr{E}}}}_{-q}\Big )^{\#} =
\underline{\widehat{{\mathscr{E}}}}_{0}^{\perp }\oplus \mathop{
\text{\Large $\oplus $}}_{
\substack{q\in I_{\gamma }\cap I_{-\gamma },\\q<0}}
\underline{\widehat{{\mathscr{E}}}}_{q}^{\perp }\oplus
\underline{\widehat{{\mathscr{E}}}}_{-q}^{\perp }\oplus \mathop{
\text{\Large $\oplus $}}_{p\in I_{-\gamma }\setminus I_{\gamma }}
\widehat{{\mathscr{E}}}_{p}.
\end{align}
Taking the intersection of \reftext{\eqref{eq:compl01}} and \reftext{\eqref{eq:compl02}} yields
the claim.
\end{proof}

\begin{theorem}%
\label{thm:5.3}
Let $|\gamma | <(n+1)/2$, and suppose that the
$\revtex q_{j,D/N}^{\pm }$ are different from both
$\frac{n+1}{2}-\gamma $ and $\frac{n+1}{2}-\gamma -2$ for all $j$. Moreover,
let $\underline{\widehat{\Delta }}_{D/N}$ be an extension with domain as in \reftext{\eqref{eq:domain}}, where the spaces
$\underline{\widehat{{\mathscr{E}}}}_{q,D/N}$ are chosen such that:
\begin{enumerate}
\item[(1)]
$\underline{\widehat{{\mathscr{E}}}}_{q,D/N}^{\perp }=
\underline{\widehat{{\mathscr{E}}}}_{n-1-q,D/N}$ for
$\displaystyle q\in I_{\gamma }\cap I_{-\gamma }$,
\item[(2)]
$\underline{\widehat{{\mathscr{E}}}}_{q,D/N} = {\widehat{{\mathscr{E}}}}_{q,D/N}$
for $\gamma \ge 0$ and $q\in I_{\gamma }\setminus I_{-\gamma }$,
\item[(3)] $\underline{\widehat{{\mathscr{E}}}}_{q,D/N} = \{0\}$ for
$\gamma \le 0$ and $q\in I_{\gamma }\setminus I_{-\gamma }$.
\end{enumerate}
Then $\underline{\widehat{\Delta }}_{D/N}$ satisfies \textup{(E3) for every sector
$\Lambda \subseteq {\mathbb{C}}\setminus {\mathbb{R}}_{+}$}.
\end{theorem}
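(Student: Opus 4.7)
The plan is to exploit dilation invariance of the domain together with a self-adjoint structure implicit in conditions (1)--(3). Since $\widehat\gamma_D$ and $\widehat\gamma_N$ commute with cut-off functions, Corollary \ref{AltE3} reduces (E3) to the invertibility of $\lambda-\underline{\widehat\Delta}_{D/N}$ on $\cK^{0,\gamma}_2(Y^\wedge)$ with $\|(\lambda-\underline{\widehat\Delta}_{D/N})^{-1}\|_{\sL(\cK^{0,\gamma}_2)}=O(|\lambda|^{-1})$ for large $|\lambda|$, $\lambda\in\Lambda$. Every element of $\widehat\sE_{q,D/N}$ is homogeneous under the group action \eqref{eq:standard-kappa} (up to a $\log x$ summand in the single degenerate case $n=1$, Neumann, $q=0$), and $(\omega(\rho x)-\omega(x))\,e(y)\,x^{-q}$ lies in $\sD_{\min}(\widehat\Delta_{D/N})$ because it is smooth and compactly supported away from $x=0$; therefore $\sD(\underline{\widehat\Delta}_{D/N})$ is $\kappa$-invariant modulo $\sD_{\min}$. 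The discussion in subsection \ref{subsec:dilation} then further reduces the problem to showing invertibility for every $\lambda\in\Lambda$ with $|\lambda|=1$.

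Next I would identify the adjoint structure encoded by conditions (1)--(3). A combinatorial check against Lemma \ref{lem:adjoint} and Lemma \ref{lem:orthogonal}, using $q_j^++q_j^-=n-1$ and the mirror relation $I_{-\gamma,D/N}=(n-1)-I_{\gamma,D/N}$, shows that these conditions are exactly what is needed in order that $\underline{\widehat\Delta}_{D/N}$ on $\cK^{0,\gamma}_2$ be the Hilbert-space adjoint, with respect to the pairing induced by $\cK^{0,0}_2$, of the analogously-constructed extension on $\cK^{0,-\gamma}_2$: condition (1) pairs the ``central'' indicial roots in $I_\gamma\cap I_{-\gamma}$ via $L^2(Y)$-orthogonal complementation, while conditions (2) and (3) prescribe mutually dual (full versus trivial) choices at $q\in I_\gamma\setminus I_{-\gamma}$. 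Specialising to $\gamma=0$ identifies $\underline{\widehat\Delta}_{D/N}$ as self-adjoint in $\cK^{0,0}_2$; Green's identity (whose boundary contribution at $x=0$ vanishes precisely on this self-adjoint domain) then puts the spectrum on a real half-line disjoint from $\Lambda\setminus\{0\}$, yielding (E3) at $\gamma=0$ from the spectral theorem.

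For general $\gamma\in(-(n+1)/2,(n+1)/2)$ I would separate variables via the $L^2(Y)$-eigenbasis $\{\phi_j\}$ of $\Delta_Y^{D/N}(0)$: the resolvent equation decouples into one-dimensional Sturm--Liouville problems for $L_j=x^{-2}((x\partial_x)^2+(n-1)(x\partial_x)+\lambda_j^{D/N})$ on $(0,\infty)$, whose solutions can be written explicitly in terms of modified Bessel functions. Conditions (1)--(3) prescribe, component-wise in $j$, the Friedrichs extension for the (infinitely many) $j$ for which $q_j^\pm\notin I_{\gamma,D/N}$, and one of finitely many explicit extensions otherwise.

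The hard part will be establishing the $O(|\lambda|^{-1})$ estimate for the family of one-dimensional Friedrichs resolvents uniformly in $j$ and in $\lambda\in\Lambda$ with $|\lambda|=1$; once this is in hand, the finitely many ``anomalous'' components contribute only a finite-rank perturbation, controlled by the finite dimension of $\underline{\widehat\sE}^\gamma_{D/N}$. The hypothesis $|\gamma|<(n+1)/2$ enters precisely here: it keeps the lowest indicial pair $q_0^\pm$ within the symmetric central region, so that the duality argument of the second step is not spoiled by the smallest eigenvalue of $\Delta_Y^{D/N}(0)$.
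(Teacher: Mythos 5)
Your opening moves match the paper's: dilation invariance of the domain (noting the homogeneity of the generators $x^{-q}e(y)$, with the $\log$-case handled modulo $\sD_{\min}$) reduces (E3) to invertibility on the unit circle, and Corollary \ref{AltE3} dispenses with the weight-at-infinity shift exactly as the paper does via Proposition \ref{prop:weight}. Your identification of the duality encoded in conditions (1)--(3) via Lemma \ref{lem:adjoint} and Lemma \ref{lem:orthogonal} is likewise the same structural observation the paper relies on.

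The gap is in how you use this duality. At $\gamma=0$ your self-adjointness argument is in the right spirit, but you then need to know that the quadratic form $(\underline{\widehat\Delta}_{D/N}\,u,u)$ is sign-definite on the \emph{full} self-adjoint domain, not only on $\sD_{\min}$. The domain contains $\omega\underline{\widehat\sE}$, whose elements behave like $x^{-q}e(y)$ near $x=0$, and the boundary contribution at $x=0$ is exactly the pairing \eqref{eq:pairing}. Self-adjointness makes that pairing vanish and hence makes the form real, but it does \emph{not} automatically give the sign: one would still have to argue semi-boundedness of the chosen self-adjoint extension (not all self-adjoint extensions of a cone Laplacian are semi-bounded). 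You assert this without proof, so even the $\gamma=0$ case is not fully closed.

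The larger gap is $\gamma\ne0$. Your proposal switches to a separation-of-variables computation with modified Bessel functions and acknowledges that "the hard part" — a $j$-uniform resolvent estimate — is left unestablished. That is precisely the content of the theorem, so the proof is not done. The duality you set up does not give invertibility for $\gamma\ne0$; it only trades $\gamma$ for $-\gamma$. The paper exploits exactly that trade: knowing the operator is Fredholm (Proposition \ref{prop:SI_for_model_cone}), it reduces to $\gamma\ge0$ via the adjoint (whose domain again satisfies (1)--(3) by \cite[Theorem 6.3]{CSS1}), and then invokes \cite[Theorem 5.7]{SS} for the invertibility of the model-cone operator in that range. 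Without this reduction, and without carrying out (or replacing) the Bessel-function estimate, your argument covers only $\gamma=0$, and there only modulo the missing semi-boundedness.
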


\begin{proof}
All extensions of the form \reftext{\eqref{eq:domain}} are invariant under dilations
in the sense of Section~\ref{subsec:dilation}. The decay condition in (E3)
therefore follows via homogeneity, provided we can establish the invertibility
of
$\eta ^{\mu }-x^{-\gamma } \underline{\widehat{\Delta }}_{D/N}x^{\gamma }$ in
${\mathcal{K}}^{0,0}_{2}(Y^{\wedge })$ for $|\eta |=1$,
$\eta \in {\mathbb{C}}\setminus \overline{{\mathbb{R}}}_{+}$. This in turn
is equivalent to the invertibility of
$\eta ^{\mu }-\underline{\widehat{\Delta }}_{D/N}$ in
${\mathcal{K}}^{0,\gamma }_{2}(Y^{\wedge })^{-\gamma }$. Since both the Dirichlet
and the Neumann boundary condition commute with cut-off functions, \reftext{Proposition~\ref{prop:weight}} shows that it suffices to establish the invertibility
of $\eta ^{\mu }-\underline{\widehat{\Delta }}_{D/N}$ in
${\mathcal{K}}^{0,\gamma }_{2}(Y^{\wedge })$.

As observed in the proof of \reftext{Proposition~\ref{prop:SI_for_model_cone}},
$\eta ^{\mu }-\underline{\widehat{\Delta }}_{D/N}$ is a Fredholm operator.
Moreover, we may assume $\gamma \ge 0$ by possibly going over to the adjoint
problem, which satisfies the conditions (1) and (2) above by
\cite[Theorem 6.3]{CSS1}. Then we argue in the same way as for Theorem
5.7 in \cite{SS}.
\end{proof}

\subsection{An extension of the Neumann Laplacian}
\label{sec6.3}

With a view towards an application discussed below, we will study a particular
extension of the Neumann Laplacian. We recall that $q_{1,N}^{-}<0$ and,
as in \cite{RS1,RS,RS3,RS4,SS2}, we fix $\gamma $ with
\begin{align}
\label{gamma}
\gamma =\frac{n-3}{2}+\delta \text{ with some }0<\delta <\min \{-q_{1,N}^{-},2
\}, \ 2-\delta \neq q_{j,N}^{+} \text{ for all }j.
\end{align}
Then $I_{\gamma ,N}$ contains $q_{0,N}^{-}=0$, but none of the
$q_{j,N}^{-}$ for $j>0$, whereas $I_{-\gamma ,N}$ contains
$q_{0,N}^{+}$, but none of the $q^{+}_{j,N}$ for $j>0$. Moreover,
$I_{\gamma ,N}\cap I_{-\gamma ,N}$ contains at most the poles
$q^{-}_{0,N}=0$ and $q^{+}_{0,N}=n-1$. In fact, for $n=1$,
$I_{\gamma ,N}\cap I_{-\gamma ,N}$ contains only
$q_{0,N}^{+}=q_{0,N}^{-}=0$. For $n=2$, the intersection consists of both
$q_{0,N}^{-}=0$ and $q_{0,N}^{+}=1$, provided $\delta <1$, else it is empty.
For $n\ge 3$, the intersection is always empty.

Let us determine the space
${\mathscr{E}}_{0,N}=\theta _{0}^{-1}\widehat{\mathscr{E}}_{0,N}$. The computation
extends that in Section 6.4.1 of \cite{SS2} to the Neumann Laplacian.

We need the conormal symbol $\sigma ^{1}_{M}({\mathcal{A}}_{N})$. Recall
the unit vector field $\nu _{x}$ defined at the begining of this section.
For every $y$ in a collar-neighborhood of the boundary $\partial Y$,
$x\mapsto \nu _{x}(y)$ represents a smooth curve in $T_{y}Y$; let
\begin{equation*}
\nu ^{\prime }_{x}(y)=\frac{d}{dx}\,\nu _{x}(y)\in T_{y}Y.
\end{equation*}
This defines a vector-field $\nu ^{\prime }_{x}$ for each $x\ge 0$. Then
we have
\begin{equation*}
\sigma ^{1}_{M}({\mathcal{A}}_{N})(z)=
\begin{pmatrix}
\Delta _{Y}^{\prime }(0)-H^{\prime }(0)z
\\
\gamma _{1,N}^{\prime
}\end{pmatrix}
, \qquad \gamma ^{\prime }_{1}:=\gamma _{D} \partial _{\nu ^{\prime }_{0}}.
\end{equation*}
Note that locally constant functions on $Y$ belong both to the kernel of
$\Delta _{Y}^{\prime }(0)$ and the kernel of $\gamma ^{\prime }_{1,N}$, i.e.,
$\Delta _{Y}^{\prime }(0)\pi _{0,N}=0$ and
$\gamma ^{\prime }_{1,N}\pi _{0,N}=0$.

\medskip\noindent
\textbf{The case $n\ge 2$:} If $\delta <1$, then
$G_{0,N}=G_{0,N}^{(0)}$ by definition. If $\delta >1$, the origin is the
only pole of $f_{0,N}(z)^{-1}$ in $I_{\gamma }=(-\delta ,2-\delta )$ and \reftext{\eqref{eq:Pi}} implies that
\begin{align*}
\mathbf{g}_{1,N}(z)\Pi _{0}(f_{0,N}(z)^{-1}\widehat{v}(z)) &=-
\mathbf{f}_{0,N}(z-1)^{-1}
\begin{pmatrix}
\Delta _{Y}^{\prime }(0)-H^{\prime }(0) z
\\
\gamma _{1,N}^{\prime
}\end{pmatrix} \frac{\pi _{0,N} \widehat{v}(0)}{(n-1)z}
\\
&=f_{0,N}(z-1)^{-1}H^{\prime }(0)\pi _{0,N} \widehat{v}(0)/(n-1)
\end{align*}
is holomorphic in $z=0$. Hence $G_{0,N}^{(1)}=0$ and
$G_{0,N}=G_{0,N}^{(0)}$. We conclude that, for every choice of
$0<\delta <\min \{-q_{1,N}^{-},2\}$,
\begin{equation*}
{\mathscr{E}}_{0,N}=\widehat{\mathscr{E}}_{0,N}=E_{0}^{N}\otimes 1.
\end{equation*}
\textbf{The case $n=1$:} By direct calculation
\begin{align*}
(G_{0,N}^{(0)} v)(x) = \int _{|z|<\varepsilon } x^{-z}\pi _{0,N}\left (
\frac{\widehat{v}(0)}{z^{2}}+\frac{\widehat{v}^{\prime }(0)}{z}\right )\,
\dbar z =\log x \ \pi _{0,N}\widehat{v}(0) +\pi _{0,N}\widehat{v}^{\prime }(0),
\end{align*}
showing that
\begin{equation*}
\widehat{{\mathscr{E}}}_{0,N} = \big \{e_{0} + e_{1}\log x\mid e_{0},e_{1}
\in E_{0}^{N}\big \}=E_{0}^{N}\otimes 1+ E_{0}^{N}\otimes \log x.
\end{equation*}
By definition, $G_{0,N}=G_{0,N}^{(0)}$ for $\delta <1$, while for
$\delta \ge 1$, similarly as before,
\begin{align*}
\mathbf{g}_{1,N}(z)(\Pi _{0}f_{0,N}^{-1}\widehat{v})(z) =&f_{0,N}(z-1)^{-1}H^{\prime }(0)\pi _{0,N}\left (\frac{\widehat{v}(0)}{z}+\widehat{v}^{\prime }(0)
\right ).
\end{align*}
Therefore
\begin{equation*}
(G_{0,N}^{(1)}v)(x) = x a_{N} \pi _{0,N}\widehat{v}(0),\qquad a_{N}:=(1+
\Delta _{Y,N}(0))^{-1}H^{\prime }(0) \in {\mathscr{C}}^{\infty }(Y).
\end{equation*}
We conclude that
\begin{align*}
{\mathscr{E}}_{0,N} =
\begin{cases}
\widehat{{\mathscr{E}}}_{0,N},& \delta <1
\\[1mm]
\big \{e_{0} + e_{1}(\log x +x a_{N})\mid e_{0},e_{1}\in E_{0}^{N}
\big \}, &\delta \ge 1
\end{cases}.
\end{align*}
The isomorphism
$\theta _{0}:{\mathscr{E}}_{0,N}\to \widehat{{\mathscr{E}}}_{0,N}$ is the
identity map in case $\delta <1$, otherwise
\begin{equation*}
\theta _{0}(e_{0} + e_{1}(\log x +x a_{N}))=e_{0} + e_{1}\log x,
\qquad e_{0},e_{1}\in E_{0}^{N}.
\end{equation*}
As a result of this computation we obtain:
\begin{corollary}%
\label{cor:E}
If $\underline{{\mathscr{E}}}_{0,N}=E_{0}^{N}\otimes 1$, then
$\underline{\widehat{{\mathscr{E}}}}_{0,N}=E_{0}^{N}\otimes 1$.
\end{corollary}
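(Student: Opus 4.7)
The plan is to read the claim directly off the explicit description of the isomorphism $\theta_0:\sE_{0,N}\to\wh\sE_{0,N}$ that was just established in the paragraphs immediately preceding the corollary. Recall that, by Theorem \ref{thm:Dmaxhat} and the construction in Definition \ref{def:domain_hat_A}, $\underline{\wh\sE}_{0,N}$ is obtained from $\underline\sE_{0,N}$ by applying $\Theta$, which on the $\sigma=0$ component coincides with $\theta_0$ (up to the $\wh R \wh T$ correction, which vanishes for Neumann boundary conditions by Remark \ref{rem:RT} since $\wh\gamma_N$ commutes with cut-off functions). So the task reduces to computing $\theta_0(E_0^N\otimes 1)$ in each of the cases that arise.

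First, I would dispose of the case $n\ge2$. There the preceding calculation showed that $\sE_{0,N}=\wh\sE_{0,N}=E_0^N\otimes 1$ for every admissible $\delta$, and the isomorphism $\theta_0$ is the identity. Hence $\underline{\sE}_{0,N}=E_0^N\otimes 1$ forces $\underline{\wh\sE}_{0,N}=E_0^N\otimes 1$ immediately.

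Next I would treat $n=1$ in its two sub-cases. For $0<\delta<1$ the computation gave $G_{0,N}=G_{0,N}^{(0)}$, so once again $\theta_0$ is the identity and the conclusion is immediate. For $\delta\ge1$ one uses the explicit formula
\[
\theta_0\bigl(e_0+e_1(\log x + x a_N)\bigr)=e_0+e_1\log x,\qquad e_0,e_1\in E_0^N,
\]
established just above the corollary. Restricting to the subspace $\underline\sE_{0,N}=E_0^N\otimes 1$ amounts to setting $e_1=0$, in which case $\theta_0(e_0)=e_0$. The image is therefore $E_0^N\otimes 1$, proving the claim in this last sub-case.

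There is really no obstacle: all of the serious work, namely the residue computation of $G_{0,N}^{(0)}$ and $G_{0,N}^{(1)}$ and the identification of $\theta_0$, has already been carried out in the body of the section, and the corollary is simply the observation that the subspace spanned by the pure $e_0$ terms is preserved by $\theta_0$. The only point one must be slightly careful about is to note that the $\wh R\wh T$ correction in $\wh\sE$ vs.\ $\wh{\underline\sE}$ is inactive here, which is ensured by Remark \ref{rem:RT} applied to $\wh\gamma_N$.
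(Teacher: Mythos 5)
Your proposal is correct and takes essentially the same route as the paper, which states the corollary as an immediate consequence (``As a result of this computation we obtain'') of the explicit formula for $\theta_0$ derived just above it. Your case analysis ($n\ge2$; $n=1$ with $\delta<1$; $n=1$ with $\delta\ge1$) simply reads off $\theta_0(E_0^N\otimes1)=E_0^N\otimes1$ from that formula, and your observation that the $\wh R\wh T$ correction is inactive for the Neumann condition is the content of Remark~\ref{rem:RT}.
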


\begin{theorem}%
\label{thm:Ndomain}
Let $\gamma $ be as in \reftext{\eqref{gamma}} and $1<p<+\infty $. If
$\underline{\Delta }_{N}$ denotes the extension of the Neumann Laplacian
with domain
\begin{equation}
\label{domain}
{\mathscr{D}}(\underline{\Delta }_{N}) = {\mathcal{H}}^{2,\gamma +2}_{p}({
\mathbb{D}})_{N}\oplus \omega \underline{{\mathscr{E}}}_{0,N},
\end{equation}
then $c-\underline{\Delta }_{N}$ has a bounded $H_{\infty }$-calculus in
${\mathcal{H}}^{0,\gamma }_{p}({\mathbb{D}})$ for sufficiently large
$c>0$.
\end{theorem}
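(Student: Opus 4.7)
The strategy is to invoke the main Theorem~\ref{thm:main} applied to the Neumann boundary value problem $\cA = \binom{\Delta}{\gamma_N}$ with the chosen closed extension $\underline\Delta_N$ of domain \eqref{domain}: this will yield the bounded $H_\infty$-calculus of $c-\underline\Delta_N$ in $\cH^{0,\gg}_p(\D)$ for sufficiently large $c$, provided the three conditions (E1), (E$2_\gamma$) and (E3) are verified.

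Conditions (E1) and (E$2_\gamma$) are rather direct. For (E1), $\D$-parameter-ellipticity of the Neumann Laplacian in $\Lambda\subseteq\C\setminus\R_+$ is classical on the regular part, while at the cone tip the rescaled symbols \eqref{eq:rescaledprinc}--\eqref{rescaledbdry} coincide, by the warped product form $g=dx^2+x^2h(x)$, with the Neumann symbols on the model cone $(Y^\wedge, dx^2+x^2h(0))$, where the same parameter-ellipticity is standard. For (E$2_\gamma$) the choice \eqref{gamma} gives $\frac{n+1}{2}-\gamma=2-\delta$ and $\frac{n+1}{2}-\gamma-2=-\delta$; by Lemma~\ref{lem:inv} the invertibility of $\bff_{0,N}$ reduces to that of $f_{0,N}$, whose poles are the real numbers $q^{\pm}_{j,N}$ of \eqref{eq:poles}; the constraints $0<\delta<\min\{-q^-_{1,N},2\}$ and $2-\delta\ne q^+_{j,N}$ in \eqref{gamma} then guarantee that neither line $\re z=-\delta$ nor $\re z=2-\delta$ hits a pole.

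Condition (E3) is the heart of the argument and I would derive it from Theorem~\ref{thm:5.3}. Corollary~\ref{cor:E} identifies the image of $\underline\sE_{0,N}=E_0^N\otimes 1$ under the isomorphism $\theta$ of Theorem~\ref{thm:Dmaxhat} as $\underline{\wh\sE}_{0,N}=E_0^N\otimes 1$. At any other pole $q\in I_{\gamma,N}$ (which occurs only for $n=2$, $\delta<1$, where $q=q^+_{0,N}=1$ enters) I set $\underline{\wh\sE}_{q,N}=\{0\}$, obtaining the model-cone extension $\underline{\wh\Delta}_N$ in the form \eqref{eq:domain}. The three compatibility conditions of Theorem~\ref{thm:5.3} are then checked case-by-case. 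Condition (1): since $\underline E^N_0=E^N_0$ is the full eigenspace, $\underline E^{N,\perp}_0=\{0\}$, matching $\underline{\wh\sE}_{n-1,N}=\{0\}$; in the $n=2$, $\delta<1$ scenario, the symmetric identity $\underline{\wh\sE}^{\perp}_{1,N}=\underline{\wh\sE}_{0,N}$ holds because the trivial choice at $q=1$ has orthogonal complement equal to $E_0^N\otimes 1$. Condition (2), needed when $\gamma\ge 0$, reads $\underline{\wh\sE}_{0,N}=\wh\sE_{0,N}=E_0^N\otimes 1$, which is exactly what we have. Condition (3), needed when $\gamma<0$, is either vacuous ($n=2$, $\delta<1/2$, where $I_\gamma\setminus I_{-\gamma}=\emptyset$) or falls under the special $n=1$ clause of Theorem~\ref{thm:5.3}, where $E_0^N\otimes 1$ is one of the three explicitly permitted options.

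The main obstacle is the bookkeeping in (E3): one must track, as functions of the dimension $n$ and the parameter $\delta$, which of the $q^{\pm}_{j,N}$ belong to $I_{\gamma,N}$ and $I_{-\gamma,N}$, and confirm that the single summand $E_0^N\otimes 1$ at $q=0$ is compatible with the symmetry conditions of Theorem~\ref{thm:5.3} in every regime. The care lies in the borderline dimensions $n=1$ (where $q^+_{0,N}=q^-_{0,N}=0$ is a double pole) and $n=2$ (where for $\delta<1$ the extra pole $q^+_{0,N}=n-1=1$ enters $I_\gamma\cap I_{-\gamma}$). Once Theorem~\ref{thm:5.3} yields (E3), Theorem~\ref{thm:main} produces the bounded $H_\infty$-calculus of $c-\underline\Delta_N$ for all $1<p<+\infty$ immediately.
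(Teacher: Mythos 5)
Your overall strategy coincides with the paper's: verify (E1), (E$2_\gamma$), and (E3) for the Neumann Laplacian with the given domain, with (E3) following from Theorem~\ref{thm:5.3} via Corollary~\ref{cor:E}, and then apply Theorem~\ref{thm:main}. The paper's own proof is precisely this four-line reduction, so you have correctly identified the route and the two key auxiliary results.

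Two points of your case analysis of (E3) are, however, misstated, even though neither affects the final conclusion. First, the ``special $n=1$ clause'' (namely, the exceptional definition $\underline{\widehat\sE}_{0,N}^\perp:=\underline{\widehat\sE}_{0,N}$ when $\underline{\widehat\sE}_{0,N}=E_0^N\otimes 1$) is what makes \emph{condition~(1)} hold for $n=1$, where $n-1-0=0$ and the generic argument ``$\underline E_0^{N,\perp}=\{0\}$, so $\underline{\widehat\sE}_{n-1,N}=\{0\}$'' breaks down; it has nothing to do with condition~(3), which for $n=1$ concerns only poles $q\ne 0$. Second, it is not generally true that $I_\gamma\setminus I_{-\gamma}=\emptyset$ for $n=2$, $\delta<1/2$, nor is it true that the only extra pole that can enter $I_{\gamma,N}$ is $q_{0,N}^+=1$: for $n=1$ or $n=2$ the higher poles $q_{j,N}^+$ with $j\ge1$ may well lie in $(-\delta,2-\delta)$ (they are only known to exceed $n-1+\delta$, which is $<2-\delta$ unless $n\ge3$ or $\delta\ge1/2$ when $n=2$). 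This does not invalidate the argument, since for $\gamma<0$ those poles fall under condition~(3) and the domain \eqref{domain} assigns them the subspace $\{0\}$; but the way you justify (3) (``vacuous'' or ``special clause'') is not the correct reason.

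With these two bookkeeping corrections your verification of Theorem~\ref{thm:5.3}'s hypotheses is complete and matches the paper's (implicit) reasoning exactly.
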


\begin{proof}
Clearly, the Neumann Laplacian satisfies condition (E1). By our choice
of $\gamma $, also (E2) holds. Finally, \reftext{Theorem~\ref{thm:5.3}} in connection
with \reftext{Corollary~\ref{cor:E}} implies condition (E3). Hence the assertion
follows from \reftext{Theorem~\ref{thm:main}}.
\end{proof}

\section{The porous medium equation on conic manifolds with boundary}
\label{sec:PME}

Following up on the investigations in \cite{RS3,RS4} and \cite{SS2} for
the case of conic manifolds without boundary, we shall show how the above
results can be applied to the porous medium equation
\begin{align*}
u'(t)-\Delta u^{m}(t) &= f(t,u) &&\text{in } {\mathbb{D}}\text{ for } t
\in (0,T),
\\
\gamma _{N} u(t) &= 0&& \text{on }\partial {\mathbb{D}}\text{ for } t
\in (0,T),
\\
u(0) &= u_{0} && \text{in } {\mathbb{D}},
\end{align*}
where $m>0$, $T>0$, $f$ is a forcing term and $u_{0}$ is some given initial
datum.

As long as $u$ is strictly positive, we can make the transformation
$u=v^{m}$ and obtain the equivalent system
\begin{align}
\label{PMEa1}
v'(t)-mv^{(m-1)/m}\Delta v (t) &= g(t,v) &&\text{in } {\mathbb{D}}
\text{ for } t\in (0,T),
\eqncr
\label{PMEa2}
\gamma _{N} v(t) &= 0&& \text{on }\partial {\mathbb{D}}\text{ for } t
\in (0,T),
\eqncr
\label{PMEa3}
v(0) &= v_{0} && \text{in } {\mathbb{D}},
\end{align}
with $g(t,v) = f(t, v^{1/m})$. In the sequel we will assume that $g$ is
holomorphic in $v$ and Lipschitz in $t$.

Equation \reftext{\eqref{PMEa1}} is a quasilinear evolution equation to which we
will apply the following theorem of Cl\'{e}ment and Li.

\begin{theorem}%
\label{CL}
Consider the quasilinear evolution equation
\begin{align*}
v'(t) +A(v(t)) v(t) =g(t,v), \qquad v(0)=v_{0}.
\end{align*}
Let $X_{0}$ and $X_{1}$ be Banach spaces and $V$ an open neighborhood of
$v_{0}$ in the real interpolation space
$X_{1-1/q,q}= (X_{0},X_{1})_{1-1/q,q}$ such that
$A(v_{0}): X_{1}\rightarrow X_{0}$ has maximal $L^{q}$-regularity and that,
for some $T_{0}>0$,
\begin{itemize}
\item[(H1)] $A\in C^{1-}(V, {\mathscr{L}}(X_{1},X_{0}))$,
\item[(H2)] $f\in C^{1-,1-}([0,T_{0}]\times V, X_{0})$.
\end{itemize}
Then there exists a $T\in (0,T_{0}]$ and a unique solution
$v\in L_{q}(0,T;X_{1})\cap W^{1}_{q}(0,T;X_{0})$ on $(0,T)$. In particular,
$v\in C([0,T];X_{1-1/q,q})$ by \cite[Theorem III.4.10.2]{Am}.
\end{theorem}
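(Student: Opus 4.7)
The plan is to recast the problem as a fixed point equation and apply the Banach contraction principle, with maximal $L_q$-regularity of the linearization at $v_0$ as the key analytic ingredient. I would introduce the maximal regularity space $E_T:=L_q(0,T;X_1)\cap W^1_q(0,T;X_0)$, which continuously embeds into $C([0,T];X_{1-1/q,q})$. By the maximal regularity hypothesis the linear Cauchy problem $u'(t)+A(v_0)u(t)=h(t)$, $u(0)=v_0$, has a unique solution $u=\cS h\in E_T$; after a standard extension argument the bound $\|\cS h\|_{E_T}\le C(\|h\|_{L_q(0,T;X_0)}+\|v_0\|_{X_{1-1/q,q}})$ holds with $C$ independent of $T\in(0,T_0]$.

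Next I would rewrite the quasilinear equation as the fixed point problem $v=\Phi(v)$, where $\Phi(v):=\cS\bigl(g(\cdot,v)+(A(v_0)-A(v))v\bigr)$. Let $v_*:=\cS(g(0,v_0))$ denote the reference linear solution and, for parameters $R,T$ to be chosen, consider
\[
\cB_{R,T}:=\{v\in E_T : v(0)=v_0,\ \|v-v_*\|_{E_T}\le R\}.
\]
The trace embedding $E_T\hookrightarrow C([0,T];X_{1-1/q,q})$ together with $v(0)=v_0$ gives $\|v(t)-v_0\|_{X_{1-1/q,q}}\to 0$ uniformly for $v\in \cB_{R,T}$ as $T\to 0^+$, so that for $R$ and $T$ sufficiently small the trajectory of each $v\in\cB_{R,T}$ stays inside the neighborhood $V$ on which (H1) and (H2) apply.

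The key estimates to verify are, for $v,w\in\cB_{R,T}$,
\[
\|g(\cdot,v)-g(\cdot,w)\|_{L_q(0,T;X_0)}\le C\,T^{1/q}\,\|v-w\|_{C([0,T];X_{1-1/q,q})}
\]
from hypothesis (H2), and, using the decomposition
$(A(v_0)-A(v))v-(A(v_0)-A(w))w=(A(w)-A(v))v+(A(v_0)-A(w))(v-w)$ together with (H1),
\[
\|(A(v_0)-A(v))v-(A(v_0)-A(w))w\|_{L_q(0,T;X_0)}\le \eta(R,T)\,\|v-w\|_{E_T},
\]
with $\eta(R,T)\to 0$ as $R,T\to 0$. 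Combined with the uniform bound on $\cS$, this shows that $\Phi$ maps $\cB_{R,T}$ into itself and is a strict contraction once $R$ and then $T$ are chosen sufficiently small; Banach's theorem then produces a unique fixed point in $\cB_{R,T}$.

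The main obstacle is the quasilinear factor $(A(v_0)-A(v))v$: although $v$ carries full size in $L_q(0,T;X_1)$, the operator norm $\|A(v_0)-A(v(t))\|_{\sL(X_1,X_0)}$ must be forced to be uniformly small in $t$. This is where the Lipschitz bound (H1) combined with the continuity of the trace $E_T\to C([0,T];X_{1-1/q,q})$ at $t=0$ is essential: it yields $\sup_{t\in[0,T]}\|v(t)-v_0\|_{X_{1-1/q,q}}\to 0$ uniformly over $v\in\cB_{R,T}$ as $T\to 0^+$, which then compensates for the $L_q(X_1)$-size of $v$. Uniqueness in all of $L_q(0,T;X_1)\cap W^1_q(0,T;X_0)$ follows by showing that any two solutions must coincide with $v_*$ on a sufficiently small initial subinterval and propagating the equality by a standard continuation argument. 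The continuous inclusion into $C([0,T];X_{1-1/q,q})$ is then quoted directly from \cite{Am}.
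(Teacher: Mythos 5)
This theorem is stated in the paper without proof; it is quoted as a known result attributed to Cl\'ement and Li, and the paper only uses it as a black box (the reference given is \cite{Am} for the trace embedding, not for the existence proof itself). So there is no ``paper's own proof'' to compare against. Your sketch reconstructs the standard maximal-regularity contraction argument behind the Cl\'ement--Li theorem, and it is essentially sound: the reference solution $v_*$, the ball $\cB_{R,T}$ with prescribed initial trace, the $T$-uniform bound on the solution operator $\cS$ obtained by extension, the algebraic decomposition of $(A(v_0)-A(v))v-(A(v_0)-A(w))w$, and the use of the trace embedding at $t=0$ to control the operator-norm deviation $\|A(v_0)-A(v(t))\|_{\sL(X_1,X_0)}$ are all the right ingredients, and you correctly identify the latter as the crux.

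Two small points worth tightening. First, in the self-mapping estimate the term $\|v\|_{L_q(0,T;X_1)}$ does not vanish merely because $T$ is small at fixed $v$; you need both $R$ small and the absolute continuity of $\|v_*\|_{L_q(0,T;X_1)}$ as $T\to0^+$ (you implicitly use this, but it should be said). Second, in the uniqueness step, any two solutions should be shown to coincide \emph{with each other} on a short initial interval (via the contraction estimate applied to their difference), not ``with $v_*$''; the linear reference $v_*$ is generically not a solution of the quasilinear problem, so the statement as written is not what you want, though the continuation idea is correct. Neither issue is structural.
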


A central property is the maximal $L^{q}$-regularity of the operator
$A(v_{0})$. We recall that all the Mellin-Sobolev spaces used here are
UMD Banach spaces and therefore the existence of a bounded
$H_{\infty }$-calculus implies the ${\mathcal{R}}$-sectoriality for the same
sector according to Cl\'{e}ment and Pr\"{u}ss, \cite[Theorem 4]{CP}. Moreover,
every operator, which is ${\mathcal{R}}$-sectorial on
$\Lambda (\theta )$ for $\theta <\pi /2$, has maximal $L^{q}$-regularity,
$1<q<+\infty $, see Weis \cite[Theorem 4.2]{W}.

For $\gamma $ and $\delta $ as in \reftext{\eqref{gamma}} we fix
$1<p,q<+\infty $ such that
\begin{equation}
\label{pq}
\frac{n+1}{p}+\frac{2}{q}<1 \text{ and } \frac{2}{q}<\delta .
\end{equation}
We shall apply the theorem of Cl\'{e}ment and Li with
$A_{c}(v)= c-mv^{(m-1)/m}\underline{\Delta }_{N}$, where
$\underline{\Delta }_{N}$ is the realization of the Neumann Laplacian with
the domain in \reftext{\eqref{domain}}, and the Banach spaces
$X_{0} = {\mathcal{H}}^{0,\gamma }_{p}({\mathbb{D}}) $ and
$X_{1} = {\mathcal{H}}^{2,\gamma +2}_{p}({\mathbb{D}})_{N} \oplus
\omega {\mathscr{E}}_{0,N} = {\mathscr{D}}(\underline{\Delta }_{N})$.

\subsection{Interpolation spaces}
\label{sec7.1}

The following observation will be useful in the sequel.

\begin{lemma}%
\label{interpolation}
Let $X_{0},X_{1}$ and $U$ be Banach spaces, all continuously embedded in
the same Hausdorff topological vector space. Assume that $U$ has finite
dimension. Then
\begin{equation}
\label{eq:inter}
(X_{0}+U,X_{1}+U)_{\theta ,q}=(X_{0},X_{1})_{\theta ,q}+U
\end{equation}
for every choice of $1<q<+\infty $ and $0<\theta <1$.
\end{lemma}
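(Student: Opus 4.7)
The plan is to prove the two inclusions separately.

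The inclusion $(X_0,X_1)_{\theta,q}+U\subseteq(X_0+U,X_1+U)_{\theta,q}$ follows from standard facts: monotonicity of the real interpolation functor with respect to enlarging the couple gives $(X_0,X_1)_{\theta,q}\subseteq(X_0+U,X_1+U)_{\theta,q}$, while $U\subseteq(X_0+U)\cap(X_1+U)$ embeds continuously into the interpolation space. Summing yields the inclusion.

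For the reverse inclusion, the strategy is to construct a continuous linear projection
\[
P\colon (X_0+U)+(X_1+U)\longrightarrow U
\]
with $P|_U=\mathrm{id}_U$ and $P(X_i)\subseteq X_i\cap U$ for $i=0,1$. Granted such a $P$, for $y=x+u$ in $X_i+U$ with $x\in X_i$ and $u\in U$ one computes $(I-P)y=x-Px\in X_i$ (using $Px\in X_i\cap U\subseteq X_i$), so that $I-P$ is bounded as an operator $X_i+U\to X_i$. Real interpolation of bounded linear operators then yields continuous extensions of $P$ to $(X_0+U,X_1+U)_{\theta,q}\to U$ and of $I-P$ to $(X_0+U,X_1+U)_{\theta,q}\to(X_0,X_1)_{\theta,q}$, and the identity $z=Pz+(I-P)z$ furnishes the required decomposition.

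The construction of $P$ is the main technical point, and it is here that the finite dimensionality of $U$ is decisive. I would argue by induction on $\dim U$, reducing to the one-dimensional case $U=\mathrm{span}(u_0)$; the inductive step follows because passing from $\dim U=k+1$ to $\dim U=k$ by removing a one-dimensional subspace leaves the same identity to check with each $X_i$ replaced by $X_i$ plus one extra dimension. In the base case, if $u_0\in X_0\cap X_1$ the claim is vacuous since $X_i+U=X_i$, and if $u_0\in X_i\setminus X_j$ the projection is produced by separating $u_0$ from $X_j$ with a Hahn-Banach functional on the Banach space $(X_0+U)+(X_1+U)$ equipped with the sum norm.

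The hardest subcase I anticipate is $u_0\in X_0+X_1$ with $u_0\notin X_0\cup X_1$: writing $u_0=\alpha_0+\alpha_1$ with $\alpha_i\in X_i$, the projection $P$ must be simultaneously compatible with the splittings $X_i+U=X_i\oplus\mathrm{span}(\alpha_j)$ on the overlap $(X_0+U)\cap(X_1+U)$. Solving the resulting compatibility condition amounts to a finite linear system whose solvability is guaranteed precisely by the one-dimensionality of $U$ together with a careful Hahn-Banach extension that respects the splitting. Once $P$ has been built, the argument concludes as outlined above.
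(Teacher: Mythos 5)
Your strategy---constructing a bounded projection $P$ onto $U$ with $P|_U=\mathrm{id}_U$ and $P(X_i)\subseteq X_i\cap U$, and then interpolating $I-P$---differs from the paper's, which fixes topological complements $\widetilde X_j$ of $U\cap X_j$ in $X_j$, uses the splitting $X_j+U=\widetilde X_j\oplus U$, and estimates the $K$-functional directly. However, the projection you need does not exist in the very subcase you flag as hardest. Suppose $U=\mathrm{span}\{u_0\}$ with $u_0=\alpha_0+\alpha_1$, $\alpha_i\in X_i$, and $X_0\cap U=X_1\cap U=\{0\}$. Then $P(X_i)\subseteq X_i\cap U=\{0\}$ forces $P(u_0)=P(\alpha_0)+P(\alpha_1)=0$, contradicting $P|_U=\mathrm{id}_U$. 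This is a linear-algebraic inconsistency, so no Hahn--Banach extension can resolve it; the ``finite linear system'' you invoke has no solution.

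The same example shows that the lemma as stated is in fact false. Take $X_0=\R^2\times\{0\}$, $X_1=\{0\}\times\R^2$, $U=\mathrm{span}\{(1,0,1)\}$ inside $\R^3$. For finite-dimensional couples one has $(A,B)_{\theta,q}=A\cap B$ whenever $0<\theta<1$ and $q<\infty$, because for $x\notin B$ the $K$-functional stays bounded away from zero near $t=0$, and for $x\notin A$ the ratio $K(t,x)/t$ stays bounded away from zero near $t=\infty$. Thus $(X_0+U,X_1+U)_{\theta,q}=\R^3$ is three-dimensional, while $(X_0,X_1)_{\theta,q}+U=(\{0\}\times\R\times\{0\})+U$ is only two-dimensional. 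The paper's own $K$-functional argument has a matching gap: after fixing $x=\widetilde x+u$, a competing representation $x=y_0+y_1$ with $y_j\in X_j+U$ only produces a decomposition of $\widetilde x+u'$ for some $u'\in U$, not of $\widetilde x$ itself, unless $(\widetilde X_0+\widetilde X_1)\cap U=\{0\}$. In the one place the lemma is applied one has $U\subseteq X_0$, under which the conclusion does hold; both your argument and the paper's require some such extra hypothesis to be correct.
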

\begin{proof}
Clearly the right-hand side in \reftext{\eqref{eq:inter}} is continuously embedded
in the left-hand side. By the inverse mapping theorem, it remains to show
that the left-hand side is a subset of the right-hand side.

Given $x\in (X_{0}+U,X_{1}+U)_{\theta ,q}$, let $\widetilde{X}_{j}$ be
a topological complement of $U\cap X_{j}$ in $X_{j}$. Then
$X_{j}+U=\widetilde{X}_{j}\oplus U$ with equivalent norms. Write
$x=\widetilde{x}+u$ with
$\widetilde{x}\in \widetilde{X}_{0}+\widetilde{X}_{1}$ and $u\in U$. Since
the norms are equivalent, there exists a $C\ge 0$ such that
\begin{align*}
\|\widetilde{x}_{0}\|_{X_{0}}+t\|\widetilde{x}_{1}\|_{X_{1}} &\le \|
\widetilde{x}_{0}\|_{X_{0}}+t\|\widetilde{x}_{1}\|_{X_{1}}+ \|u_{0}\|+t
\|u_{1}\|
\\
&\le C\big (\|\widetilde{x}_{0}+u_{0}\|_{X_{0}+U}+t\|\widetilde{x}_{1}+u_{1}
\|_{X_{1}+U}\big )
\end{align*}
for every $t>0$, whenever
$\widetilde{x}=\widetilde{x}_{0}+\widetilde{x}_{1}$ with
$\widetilde{x}_{j}\in \widetilde{X}_{j}$ and $u=u_{0}+u_{1}$ with
$u_{j}\in U$. By passing to the infimum over all such representations we
find
\begin{equation*}
K(t,\widetilde{x};\widetilde{X}_{0},\widetilde{X}_{1})\le C K(t,x;X_{0}+U,X_{1}+U),
\end{equation*}
where $K(\cdot )$ is the usual $K$-functional in the definition of the
real interpolation method. It follows that
$\widetilde{x}\in (\widetilde{X}_{0},\widetilde{X}_{1})_{\theta ,q}
\hookrightarrow (X_{0},X_{1})_{\theta ,q}$.
\end{proof}

\begin{lemma}%
\label{realint}
Let $s_{1},s_{0},\gamma _{1},\gamma _{0}\in \mathbb{R}$,
$1<p,q<+\infty $, and $0<\theta <1$ be arbitrary. Then
\begin{equation}
\label{realint2}
{\mathcal{H}}^{s+\varepsilon ,\gamma +\varepsilon }_{p}({\mathbb{D}})
\hookrightarrow ({\mathcal{H}}^{s_{0},\gamma _{0}}_{p}({\mathbb{D}}),{
\mathcal{H}}^{s_{1},\gamma _{1}}_{p}({\mathbb{D}}))_{\theta ,q}
\hookrightarrow {\mathcal{H}}^{s-\varepsilon ,\gamma -\varepsilon }_{p}({
\mathbb{D}})
\end{equation}
for all $\varepsilon >0$, where $s=(1-\theta )s_{0}+\theta s_{1}$ and
$\gamma =(1-\theta )\gamma _{0}+\theta \gamma _{1}$.
\end{lemma}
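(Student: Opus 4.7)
The plan is to reduce the statement to a purely classical real-interpolation result on a smooth manifold-cum-cylinder by combining a cut-off localization with the Mellin transform, and then to invoke the standard sandwich $H^{s+\varepsilon}_p \hookrightarrow B^s_{p,q}\hookrightarrow H^{s-\varepsilon}_p$ together with the identity $(H^{s_0}_p,H^{s_1}_p)_{\theta,q}=B^s_{p,q}$.

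First, fix a cut-off $\omega$ supported near the tip. Multiplication by $\omega$ and by $1-\omega$ are bounded on every space $\cH^{s,\gamma}_p(\D)$, so it suffices, by functoriality of real interpolation, to handle the two summands $(1-\omega)u$ and $\omega u$ separately. On $\mathrm{supp}\,(1-\omega)\subset\D_{\mathrm{reg}}$ the weight $\tilde x^{\gamma}$ is bounded above and below, and the scale $\cH^{s,\gamma}_p$ is locally equivalent to the classical $L^p$-Sobolev scale on a smooth manifold with boundary. There the result is exactly the classical Besov interpolation identity combined with the inclusion $H^{s+\varepsilon}_p\hookrightarrow B^s_{p,q}\hookrightarrow H^{s-\varepsilon}_p$.

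Second, on the cylindrical part I use the substitution $x=e^{-t}$, $t\in\mathbb R_+$, and the multiplication operator $M_\gamma: u(x,y)\mapsto e^{(\gamma+n/p)t}u(e^{-t},y)$. A direct computation (the one that justifies the very definition of the weighted Mellin--Sobolev spaces, cf.\ the appendix) shows that $M_\gamma$ is an isomorphism
\[
M_\gamma:\cH^{s,\gamma}_p\big|_{\mathrm{cyl}} \longrightarrow H^s_p(\mathbb R_+\times Y),
\]
so that, applying $M_{\gamma_0}$ to both arguments,
\[
(\cH^{s_0,\gamma_0}_p,\cH^{s_1,\gamma_1}_p)_{\theta,q}\cong M_{\gamma_0}^{-1}\bigl(H^{s_0}_p(\mathbb R_+\times Y),\,e^{-(\gamma_1-\gamma_0)t}H^{s_1}_p(\mathbb R_+\times Y)\bigr)_{\theta,q}.
\]
This identifies the problem with the real interpolation of two $L^p$-Sobolev scales on $\mathbb R_+\times Y$ whose orders and exponential weights both vary linearly.

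Third, interpolate. Since multiplication by $e^{\alpha t}$ corresponds under the Fourier/Mellin transform to a holomorphic translation of the spectral variable, the scale $\{e^{-\gamma t}H^s_p\}$ is, up to this isomorphic translation, a standard two-parameter Bessel-potential scale. The classical real interpolation identity for Bessel-potential spaces together with the embeddings $H^{s+\varepsilon}_p\hookrightarrow B^s_{p,q}\hookrightarrow H^{s-\varepsilon}_p$ (applied in the weighted version, which is preserved because multiplication by the fixed weight $e^{-\theta(\gamma_1-\gamma_0)t}$ is an isomorphism of the scale) yields the intermediate Besov-type space with regularity $s=(1-\theta)s_0+\theta s_1$ and exponential weight corresponding to $\gamma=(1-\theta)\gamma_0+\theta\gamma_1$. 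Transporting back by $M_{\gamma_0}^{-1}$ gives the claim.

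The main obstacle is the fact that both the smoothness index and the weight vary along the interpolation; this is exactly the point where the Mellin identification is essential, since after $M_{\gamma_0}$ the weight difference becomes a multiplicative exponential factor that can be absorbed into the Bessel-potential scale by the Paley--Wiener-type shift of the Mellin line. Once this identification is in place, the $\varepsilon$-loss embeddings are forced upon us by the non-coincidence of Besov and Bessel-potential spaces when $p\neq q$, and the assertion follows with no further loss.
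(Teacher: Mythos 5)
Your overall plan—cut-off localization, reduction to the cylinder $\R_+\times Y$ via the Mellin substitution, classical interpolation with $\varepsilon$-loss, and gluing—matches the structure of the paper's proof, which likewise doubles $\D$ to a smooth manifold, reduces to $\cH^{s,\gamma}_p(\R_+\times 2Y)$, and then uses extension/restriction and a partition of unity before handling $p\neq q$ via the standard embeddings $(X_0,X_1)_{\theta,q}\hookrightarrow(X_0,X_1)_{\theta',p}\hookrightarrow(X_0,X_1)_{\theta'',q}$. The problem is in your step three, which is exactly where the real difficulty sits.

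You correctly identify that after applying $M_{\gamma_0}$ the couple becomes
$\bigl(H^{s_0}_p,\ e^{-(\gamma_1-\gamma_0)t}H^{s_1}_p\bigr)$, in which both the smoothness index and the exponential weight vary. But the argument you then give—"multiplication by the fixed weight $e^{-\theta(\gamma_1-\gamma_0)t}$ is an isomorphism of the scale" combined with a "Paley--Wiener-type shift of the Mellin line"—does not actually reduce this to a single-weight couple. Conjugating the pair by $e^{\theta(\gamma_1-\gamma_0)t}$ yields $\bigl(e^{\theta(\gamma_1-\gamma_0)t}H^{s_0}_p,\ e^{-(1-\theta)(\gamma_1-\gamma_0)t}H^{s_1}_p\bigr)$: both endpoints still carry nontrivial weights, and concluding that the interpolation space has the "intermediate" weight is precisely what needs to be proved, so the reasoning is circular. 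The Mellin/Fourier picture of the weight as a shift of the integration contour is a helpful heuristic, but the real interpolation functor does not commute with moving contours in any off-the-shelf way. The paper avoids this by outsourcing the variable-weight interpolation on $\R_+\times 2Y$ to \cite[Lemma 5.4]{CSS2} together with the duality $\cH^{s,\gamma}_p{}'=\cH^{-s,-\gamma}_{p'}$ (which lets one embedding be deduced from the other); you would need to supply an argument of comparable substance here. A further symptom of the gap: you attribute the $\varepsilon$-loss entirely to the discrepancy between Besov and Bessel-potential spaces for $p\ne q$, but that cannot explain the $\varepsilon$ appearing in the weight exponent $\gamma\mp\varepsilon$, which is a genuine loss coming from the variable-weight interpolation and persists even when $p=q$.
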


\begin{proof}
Let $2{\mathbb{D}}$ be the smooth manifold with boundary obtained by gluing
two copies of ${\mathbb{D}}$ along $\{0\}\times Y$. It is then well-known
that
\begin{equation*}
H^{s+\varepsilon }_{p}(2{\mathbb{D}})\hookrightarrow (H^{s_{0}}_{p}(2{
\mathbb{D}}),H^{s_{1}}_{p}(2{\mathbb{D}}))_{\theta ,p} \hookrightarrow H^{s-
\varepsilon }_{p}(2{\mathbb{D}}).
\end{equation*}
Let $2Y$ denote a closed manifold containing $Y$. Proceeding as in the
proof of \cite[Lemma 5.4]{CSS2} and using duality, one finds that
\begin{equation*}
{\mathcal{H}}^{s+\varepsilon ,\gamma +\varepsilon }_{p}({\mathbb{R}}_{+}
\times 2Y)\hookrightarrow ({\mathcal{H}}^{s_{0},\gamma _{0}}_{p}({
\mathbb{R}}_{+}\times 2Y),{\mathcal{H}}^{s_{1},\gamma _{1}}_{p}({
\mathbb{R}}\times 2Y))_{\theta ,p}\hookrightarrow {\mathcal{H}}^{s-
\varepsilon ,\gamma -\varepsilon }_{p}({\mathbb{R}}_{+}\times 2Y).
\end{equation*}
With the help of a continuous extension operator as well as the restriction
operator, one finds the latter embeddings also for the spaces on
${\mathbb{R}}\times Y$. By a standard partition of unity argument we obtain \reftext{\eqref{realint2}} in case $p=q$. The general case follows from the embedding
results for interpolation spaces:
\begin{equation*}
(X_{0},X_{1})_{\theta ,q}\hookrightarrow (X_{0},X_{1})_{\theta ',p}
\hookrightarrow (X_{0},X_{1})_{\theta '',q}, \quad \theta ''< \theta '<
\theta , 1<p,q<+\infty ,
\end{equation*}
see \cite[(I.2.5.2)]{Am}.
\end{proof}

\begin{proposition}%
\label{internewman}
Let $\gamma $ and
${\mathscr{D}}(\underline{\Delta }_{N})={\mathcal{H}}^{2,\gamma +2}_{p}({
\mathbb{D}})_{N}\oplus \omega \underline{{\mathscr{E}}}_{0,N}$ be as in
\reftext{Theorem~\textup{\ref{thm:Ndomain}}} with $1<p<+\infty $ fixed. Let
$1<q<+\infty $ and $\theta \in (0,1)$ with $2\theta >1+\frac{1}{p}$. Then,
for every $\varepsilon >0$,
\begin{align*}
({\mathcal{H}}^{0,\gamma }_{p}({\mathbb{D}}),{\mathscr{D}}(
\underline{\Delta }_{N}))_{\theta ,q} \hookrightarrow {\mathcal{H}}^{2
\theta -\varepsilon ,\gamma +2\theta -\varepsilon }_{p}({\mathbb{D}})_{N}
\oplus \omega \underline{{\mathscr{E}}}_{0,N}.
\end{align*}
\end{proposition}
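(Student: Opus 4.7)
The plan is to peel off the finite-dimensional summand $\omega\underline{\sE}_{0,N}$ using Lemma~\ref{interpolation}, reduce to an interpolation problem for pure Mellin-Sobolev spaces, apply Lemma~\ref{realint} to gain regularity and weight, and finally preserve the Neumann condition under interpolation via a retract argument.

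First I would set $U:=\omega\underline{\sE}_{0,N}$. By Theorem~\ref{thm:Dmax}, $U$ is finite-dimensional and contained in $\cH^{\infty,\gamma+\eps'}_p(\D)\subset \cH^{0,\gamma}_p(\D)$ for some $\eps'>0$. Consequently, $\cH^{0,\gamma}_p(\D)+U=\cH^{0,\gamma}_p(\D)$ and $\cH^{2,\gamma+2}_p(\D)_N+U=\sD(\underline{\Delta}_N)$. Lemma~\ref{interpolation} applied with $X_0=\cH^{0,\gamma}_p(\D)$ and $X_1=\cH^{2,\gamma+2}_p(\D)_N$ then yields
\[
  (\cH^{0,\gamma}_p(\D),\sD(\underline{\Delta}_N))_{\theta,q}
  = (\cH^{0,\gamma}_p(\D),\cH^{2,\gamma+2}_p(\D)_N)_{\theta,q} + U.
\]

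It thus remains to prove $(\cH^{0,\gamma}_p(\D),\cH^{2,\gamma+2}_p(\D)_N)_{\theta,q}\hookrightarrow \cH^{2\theta-\eps,\gamma+2\theta-\eps}_p(\D)_N$. Monotonicity of real interpolation combined with Lemma~\ref{realint} yields the embedding into $\cH^{2\theta-\eps,\gamma+2\theta-\eps}_p(\D)$ without the boundary condition. To recover the Neumann condition, I would invoke the surjectivity of the trace in \eqref{eq:Tsurjective}, which supplies a bounded right-inverse $R$ of $\gamma_N$ in Boutet de Monvel's calculus; the projection $I-R\gamma_N$ then maps $\cH^{s,\gamma'}_p(\D)$ onto its Neumann kernel for every $s>1+1/p$. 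Since $2\theta>1+1/p$, the trace $\gamma_N$ extends continuously to $\cH^{2\theta-\eps,\gamma+2\theta-\eps}_p(\D)$ provided $\eps$ is small enough. Representing a given $w$ in the interpolation space by the $J$-method as $w=\int_0^\infty u(t)\,dt/t$ with $u(t)\in \cH^{2,\gamma+2}_p(\D)_N$ gives $\gamma_N u(t)=0$ for every $t$, whence $\gamma_N w=0$ by continuity.

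Combining the two steps yields the claimed embedding into $\cH^{2\theta-\eps,\gamma+2\theta-\eps}_p(\D)_N + U$ (the sum being an actual direct sum precisely when $\gamma+2\theta-\eps\ge (n+1)/2$, otherwise the ``$\oplus$'' in the conclusion is merely a set-theoretic sum, which does not affect the embedding). The hard part will be the transfer of the Neumann condition across the interpolation functor: since the projection $I-R\gamma_N$ is not defined on $\cH^{0,\gamma}_p(\D)$, the naive ``interpolate the projection'' argument fails, and one must instead integrate the pointwise relation $\gamma_N u(t)=0$ along a $J$-representation -- or, equivalently, invoke Lions's retraction/coretraction formalism -- to pass the boundary condition through.
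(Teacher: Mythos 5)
Your reduction is exactly the paper's: Lemma~\ref{interpolation} peels off the finite-dimensional summand $U=\omega\underline{\sE}_{0,N}$, and Lemma~\ref{realint} together with monotonicity handles the smoothness and weight gain, so that everything boils down to showing $(\cH^{0,\gamma}_p(\D),\cH^{2,\gamma+2}_p(\D)_N)_{\theta,q}\subset\ker\gamma_N$. Where you diverge is in how that inclusion is established. The paper uses the integral representation of the fractional power, $(c-\underline{\Delta}_N)^{-\theta}=\frac{\sin\pi\theta}{\pi}\int_0^\infty s^{-\theta}R_N(s)\,ds$, interpolates the resolvent bounds to get convergence of the integral in $\sL(\cH^{0,\gamma}_p(\D),\cH^{2\rho,\gamma}_p(\D))$ for $\rho<\theta$, pulls $\gamma_N$ under the integral (using $2\rho>1+1/p$ and $\gamma_N R_N(s)\equiv 0$), and concludes $\gamma_N$ vanishes on $\sD((c-\underline{\Delta}_N)^\theta)\supset (X_0,X_1)_{\theta,q}$ via \cite[(I.2.9.6)]{Am}. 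Your $J$-representation route is more elementary since it bypasses fractional powers altogether, and it does get at the right idea, but the final ``whence $\gamma_N w=0$ by continuity'' hides a real step: the $J$-representation $w=\int_0^\infty u(t)\,dt/t$ is only guaranteed to converge in $X_0+X_1=\cH^{0,\gamma}_p(\D)$, where $\gamma_N$ is not defined, so the continuity of $\gamma_N$ on $\cH^{2\theta-\eps,\gamma+2\theta-\eps}_p(\D)$ cannot be applied blindly to the integral.

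To close this gap cleanly, avoid the $J$-integral altogether and argue by density: for $1<q<\infty$ (your hypothesis) the space $X_0\cap X_1=X_1=\cH^{2,\gamma+2}_p(\D)_N$ is dense in $(X_0,X_1)_{\theta,q}$; the embedding $(X_0,X_1)_{\theta,q}\hookrightarrow\cH^{2\theta-\eps,\gamma+2\theta-\eps}_p(\D)$ is continuous (already established); and $\gamma_N$ is continuous on the target for $2\theta-\eps>1+1/p$. Hence $\gamma_N$, viewed as a map from $(X_0,X_1)_{\theta,q}$ into the trace space, is continuous and vanishes on the dense subspace $X_1$, so it vanishes identically. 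Your aside on Lions's retraction formalism is worth deleting: as you yourself observe, there is no bounded coretraction on the pair $(X_0,X_1)$ (precisely because $I-R\gamma_N$ does not extend to $X_0$), so that device is not available here.
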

\begin{proof}
Note that
$\omega \underline{{\mathscr{E}}}_{0,N}\subset \mathcal{H}^{0,\gamma }_{p}(
\mathbb{D})$ by the choice of $\gamma $. By \reftext{Lemmas~\ref{interpolation} and \ref{realint}}, it remains to prove that
\begin{align*}
({\mathcal{H}}^{0,\gamma }_{p}({\mathbb{D}}),{\mathcal{H}}^{2,\gamma +2}_{p}({
\mathbb{D}})_{N})_{\theta ,q} \subset \mathrm{ker}\,\gamma _{N}.
\end{align*}
If $\Lambda $ is any fixed sector that does not contain the positive reals,
there exists a $c>0$ such that $c-\underline{\Delta }_{N}$ is sectorial
with respect to $\Lambda $, cf. \reftext{Theorem~\ref{thm:Ndomain}}. Set
$R_{N}(\lambda ):=(\lambda +c- \underline{\Delta }_{N})^{-1}$. Then
$R_{N}(\lambda )$ is uniformly bounded in
${\mathscr{L}}({\mathcal{H}}^{0,\gamma }_{p}({\mathbb{D}}),{\mathcal{H}}^{2,
\gamma }_{p}({\mathbb{D}}))$ and $(1+|\lambda |)R_{N}(\lambda )$ is uniformly
bounded in ${\mathscr{L}}({\mathcal{H}}^{0,\gamma }_{p}({\mathbb{D}}))$. From
complex interpolation and the fact that
$[{\mathcal{H}}^{s_{0},\gamma }_{p}({\mathbb{D}}),{\mathcal{H}}^{s_{1},
\gamma }_{p}({\mathbb{D}})]_{\rho }={\mathcal{H}}^{s,\gamma }_{p}({
\mathbb{D}})$ with $s=(1-\rho )s_{0}+\rho s_{1}$, we then obtain that
\begin{equation*}
\|R_{N}(\lambda )\|_{{\mathscr{L}}({\mathcal{H}}^{0,\gamma }_{p}({
\mathbb{D}}),{\mathcal{H}}^{2\rho ,\gamma }_{p}({\mathbb{D}}))} \le C(1+|
\lambda |)^{-1+\rho }, \quad 0<\rho <1.
\end{equation*}
Hence, for $\theta >\rho $,
\begin{equation*}
(c-\underline{\Delta }_{N})^{-\theta }=\frac{\sin \pi \theta }{\pi }\int _{0}^{\infty }s^{-\theta }R_{N}(s)\,ds
\end{equation*}
with the integral converging in
${\mathscr{L}}({\mathcal{H}}^{0,\gamma }_{p}({\mathbb{D}}),{\mathcal{H}}^{2
\rho ,\gamma }_{p}({\mathbb{D}}))$. For $2\rho >1+{1}/{p}$ we find that
$\gamma _{N}(c-\underline{\Delta }_{N})^{-\theta }=0$, since
$\gamma _{N}\in {\mathscr{L}}({\mathcal{H}}^{2\rho ,\gamma }_{p}({
\mathbb{D}}),{\mathcal{H}}^{0,\gamma -1/2}_{p}({\mathbb{B}}))$ can be pulled
under the integral and $\gamma _{N}R_{N}(s)\equiv 0$. Hence
$\gamma _{N}$ vanishes on
${\mathscr{D}}((c-\underline{\Delta }_{N})^{\theta })$ and the assertion follows
from the embedding
$({\mathcal{H}}^{0,\gamma }_{p}({\mathbb{D}}),{\mathcal{H}}^{2,\gamma +2}_{p}({
\mathbb{D}})_{N})_{\theta ,q} \hookrightarrow {\mathscr{D}}((c-
\underline{\Delta }_{N})^{\theta '})$ for $\theta '<\theta $, see
\cite[(I.2.9.6)]{Am}.
\end{proof}

\subsection{Solving the porous medium equation}
\label{sec7.2}

\begin{proposition}%
\label{rbound}
Let $v\in (X_{0},X_{1})_{1-1/q,q}$ with $v\ge \alpha >0$. For each
$\theta \in (0,\pi )$ there exists a $c>0$ such that
$A_{c}(v) = c-mv^{(m-1)/m}\underline{\Delta }_{N}$ is ${\mathcal{R}}$-sectorial
of angle $\theta $. In particular, $A(v)$ has maximal $L^{q}$-regularity.
\end{proposition}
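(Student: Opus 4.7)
The plan is to leverage the bounded $H_\infty$-calculus of $c_0-\underline\Delta_N$ from Theorem \ref{thm:Ndomain} and transfer it, in the weaker form of $\cR$-sectoriality, to the variable-coefficient operator $A_c(v)$.

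First, I would upgrade the regularity of $v$. By Proposition \ref{internewman}, $v$ lies in $\cH^{2-2/q-\varepsilon,\gamma+2-2/q-\varepsilon}_p(\D)_N\oplus\omega\underline\sE_{0,N}$ for any small $\varepsilon>0$. Condition \eqref{pq} guarantees $2-2/q-\varepsilon>(n+1)/p$, so Sobolev embedding on $\D_{\mathrm{reg}}$ renders the principal summand bounded and continuous, while the asymptotic summand $\omega\underline\sE_{0,N}=\omega(E_0^N\otimes 1)$ is manifestly bounded and smooth by Corollary \ref{cor:E}. Combined with $v\ge\alpha>0$, this yields that $w:=v^{(m-1)/m}$ is a bounded, continuous, strictly positive function on $\D$ of the same H\"older-type regularity as $v$. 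Consequently, multiplication $M_{mw}$ by $mw$ is a bicontinuous isomorphism of $X_0=\cH^{0,\gamma}_p(\D)$, and, modulo terms of lower order that can be absorbed as perturbations, it maps $\sD(\underline\Delta_N)$ into itself, so that $\sD(A_c(v))=\sD(\underline\Delta_N)$ and $A_c(v)=c-M_{mw}\underline\Delta_N$ is a closed operator on $X_0$ with domain $X_1$.

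The core step is to obtain the $\cR$-sectoriality of angle $\theta$. My plan here is a localization-and-freezing argument: by uniform continuity of $w$, cover $\D$ by finitely many small patches on which $w$ is close to constants $w_j>0$; on each patch, $A_c(v)$ differs from the reference operator $c-w_jm\underline\Delta_N$ by a term $(w-w_j)m\underline\Delta_N$ whose relative bound can be made as small as desired by shrinking the patch diameter. Each reference operator inherits, up to a trivial rescaling of $c$, the bounded $H_\infty$-calculus of $c-\underline\Delta_N$ from Theorem \ref{thm:Ndomain}, hence is $\cR$-sectorial of any angle $\theta\in(0,\pi)$ once $c$ is sufficiently large. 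A partition-of-unity assembly combined with the standard Kunstmann--Weis-type perturbation theorem for $\cR$-sectoriality then produces the claimed $\cR$-sectoriality of $A_c(v)$. Taking any $\theta<\pi/2$ and applying \cite[Theorem 4.2]{W} finally yields maximal $L^q$-regularity.

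The main obstacle I anticipate is the freezing step near the conical tip $x=0$: there the Laplacian is not a regular differential operator and $\sD(\underline\Delta_N)$ carries the additional asymptotic summand $\omega\underline\sE_{0,N}$, so a straightforward coordinate localization is unavailable. The analysis near $x=0$ must be performed by freezing $w$ to its cross-sectional trace $w(0,\cdot)$ and comparing $A_c(v)$ with the model-cone realization $c-w(0,\cdot)m\,\underline{\widehat\Delta}_N$, whose $\cR$-sectoriality follows from the dilation-invariance machinery of Section \ref{subsec:dilation} together with Corollary \ref{AltE3} and Proposition \ref{prop:weight}, along the same pattern carried out for the boundaryless case in \cite{RS3,RS4,SS2}. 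The residual error from this freezing is, by the regularity of $w$ proved in the first step, a small perturbation relative to the model operator and can again be absorbed by choosing $c$ large.
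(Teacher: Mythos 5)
Your overall strategy---upgrade the regularity of $v$, observe that $w:=v^{(m-1)/m}$ is bounded, continuous and bounded away from zero, then localize, freeze coefficients, and perturb from a dilation-invariant model operator near the tip---is exactly the route the paper takes, since the paper's own proof consists of a reference to the boundaryless argument of \cite[Theorem 6.1]{RS3}, whose structure is precisely the localization--and--freezing scheme you describe, adapted here to the boundary case.

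Two points could be sharpened. First, the domain issue is simpler than you make it: since $A_c(v)=c-M_{mw}\underline\Delta_N$ and $M_{mw}$ acts as a bounded isomorphism of $X_0=\cH^{0,\gamma}_p(\D)$ \emph{after} $\underline\Delta_N$ has been applied, one has $\sD(A_c(v))=\sD(\underline\Delta_N)$ with no perturbation argument; there is no lower-order remainder to absorb at the domain level. Second, and this is the point your sketch glosses over, the freezing near $x=0$ relies crucially on the fact that $w(0,\cdot)$ is \emph{locally constant} on $Y$. Indeed, by Proposition \ref{internewman} and the choice of $\gamma$, $p$, $q$ in \eqref{gamma}, \eqref{pq}, the interpolation-space component of $v$ lies in a weighted Sobolev space whose weight exceeds $\frac{n+1}{2}$ and whose smoothness exceeds $\frac{n+1}{p}$, so it vanishes continuously at the tip; the remaining summand lies in $\omega\underline\sE_{0,N}=\omega(E_0^N\otimes 1)$, i.e.\ is a cut-off times a function that is constant on each component of $Y$. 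Hence $v(0,\cdot)\in E_0^N$, so $w(0,\cdot)$ is a positive locally constant function. This is what makes the frozen model operator $c-mw(0,\cdot)\,\underline{\widehat\Delta}_N$ a (componentwise) positive scalar multiple of $c-\underline{\widehat\Delta}_N$, so that its $\cR$-sectoriality follows from Section \ref{subsec:dilation} and Corollary \ref{AltE3} by rescaling. Were $w(0,\cdot)$ a genuine nonconstant function of $y$, this reduction would fail and the freezing argument near the tip would need a different idea.
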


This follows with minor modifications as in case without boundary, see
the proof of Theorem 6.1 in \cite{RS3}.

\begin{remark}
Combining the proof of \cite[Theorem 6.1]{RS3} with the method used in
the proof of \cite[Theorem 5.7]{DHP}, we obtain the above result even for
the case of a continuous function $v$ on ${\mathbb{D}}$ which is bounded
away from zero.
\end{remark}

\begin{theorem}%
Choose $\gamma $, $p$ and $q$ as in \reftext{\eqref{gamma}} and \reftext{\eqref{pq}}. Then
the porous medium equation \reftext{\eqref{PMEa1}}, \reftext{\eqref{PMEa2}},
\reftext{\eqref{PMEa3}} has a unique short time solution
\begin{equation*}
v\in W^{1,q}(0,T;\mathcal{H}^{0,\gamma }_{p}(\mathbb{D}))\cap L^{q}(0,T;
\mathcal{H}^{2,\gamma +2}_{p}(\mathbb{D})_{{N}}\oplus \omega {
\mathscr{E}}_{0,N})
\end{equation*}
for every strictly positive initial datum
$v_{0}\in (X_{0},X_{1})_{1-1/q,q}$.

In particular, $v\in C([0,T];(X_{0},X_{1})_{1-1/q,q})$ which, according
to \reftext{Proposition~\textup{\ref{internewman}}}, embeds into
$C([0,T];\mathcal{H}^{2-\frac{2}{q}-\varepsilon ,\gamma +2-
\frac{2}{q}-\varepsilon }_{p} (\mathbb{D})_{N}\oplus \omega {
\mathscr{E}}_{0,N})$ for every $\varepsilon >0$. If $g$ is independent of
$t$, then we additionally have
$v\in C^{\infty }((0,T);\mathcal{H}^{2,\gamma +2}_{p}(\mathbb{D})_{{N}}
\oplus \omega {\mathscr{E}}_{0,N})$.
\end{theorem}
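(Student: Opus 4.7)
The plan is to reduce the theorem to an application of the Cl\'ement--Li theorem (Theorem \ref{CL}) with the Banach spaces $X_0=\cH^{0,\gamma}_p(\D)$ and $X_1=\sD(\underline{\Delta}_N)=\cH^{2,\gamma+2}_p(\D)_N\oplus\omega\underline{\sE}_{0,N}$, taking as quasilinear operator $A_c(v)=c-m v^{(m-1)/m}\underline{\Delta}_N$ and absorbing the extra linear term $cv$ into the forcing, i.e. working with $\widetilde g(t,v):=g(t,v)+cv$. The value of $c>0$ will be chosen large enough that Proposition \ref{rbound} applies, so that $A_c(v_0)\colon X_1\to X_0$ has maximal $L^q$-regularity.

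First I would verify that for our choice \eqref{pq} the interpolation space $X_{1-1/q,q}$ is a multiplication algebra of continuous functions on $\D$. Combining Proposition \ref{internewman} with $\theta=1-1/q$ and the Sobolev embedding for Mellin--Sobolev spaces (choose $\eps>0$ so small that $2-2/q-\eps>(n+1)/p$, which is possible by \eqref{pq}), one gets
\begin{equation*}
X_{1-1/q,q}\hookrightarrow \cH^{2-2/q-\eps,\gamma+2-2/q-\eps}_p(\D)_N\oplus\omega\underline{\sE}_{0,N}\hookrightarrow C(\D).
\end{equation*}
Since $v_0\ge\alpha>0$ is continuous, there is an open neighbourhood $V$ of $v_0$ in $X_{1-1/q,q}$ consisting of functions that are $\ge\alpha/2$ pointwise. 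On such $V$ the map $v\mapsto v^{(m-1)/m}$ is real-analytic with values in the multiplier space of $X_1\to X_0$ (multiplication by a sufficiently regular continuous function preserves the Mellin--Sobolev spaces and the finite-dimensional asymptotic space $\omega\underline{\sE}_{0,N}$, as $\underline\Delta_N$ acts on the latter by producing a rapidly decaying correction). In particular $v\mapsto v^{(m-1)/m}\underline{\Delta}_N$ is locally Lipschitz as a map $V\to \sL(X_1,X_0)$, giving hypothesis (H1). Hypothesis (H2) on $\widetilde g$ follows from the standing assumption that $g$ is holomorphic in $v$ and Lipschitz in $t$, together with the embedding $V\hookrightarrow C(\D)$.

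Theorem \ref{CL} then yields $T\in(0,T_0]$ and a unique solution
\begin{equation*}
v\in W^{1,q}(0,T;X_0)\cap L^q(0,T;X_1),
\end{equation*}
which automatically lies in $C([0,T];X_{1-1/q,q})$; feeding this into Proposition \ref{internewman} gives the claimed continuous embedding into $C([0,T];\cH^{2-2/q-\eps,\gamma+2-2/q-\eps}_p(\D)_N\oplus\omega\underline{\sE}_{0,N})$. For the smoothing statement when $g$ is independent of $t$, one iterates the standard bootstrap argument: the bounded $H_\infty$-calculus from Theorem \ref{thm:Ndomain} provides maximal regularity on each time subinterval, and the Lipschitz/holomorphic dependence on $v$ propagates time regularity through the equation, exactly as in \cite[\S6]{RS3}.

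The main obstacle is the Lipschitz step underlying (H1): one must check that multiplication by $v^{(m-1)/m}$ with $v\in V$ indeed maps $X_1$ into $X_0$ boundedly and depends Lipschitz-continuously on $v$. For the regular part $\cH^{2,\gamma+2}_p(\D)_N$ this is a standard paraproduct/Sobolev multiplier argument made feasible by \eqref{pq}; the delicate point is the interaction with the finite-dimensional asymptotic summand $\omega\underline{\sE}_{0,N}$, where one has to verify that multiplying an element of $\underline{\sE}_{0,N}$ by a function in $V$ produces, modulo a correction lying in $\cH^{2,\gamma+2}_p(\D)_N$, again an element of $\omega\underline{\sE}_{0,N}$; this uses the explicit structure of $\underline{\sE}_{0,N}=E_0^N\otimes 1$ obtained in Corollary \ref{cor:E}, which consists of constants near the tip and therefore interacts well with pointwise multiplication.
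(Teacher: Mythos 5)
Your proposal follows the same route as the paper: one applies the Cl\'ement--Li theorem with $X_0=\cH^{0,\gamma}_p(\D)$, $X_1=\sD(\underline\Delta_N)$, maximal $L^q$-regularity of $A_c(v_0)$ from Proposition~\ref{rbound}, the embedding $(X_0,X_1)_{1-1/q,q}\hookrightarrow C(\D)$ guaranteed by Proposition~\ref{internewman} together with the choice \eqref{pq}, and smoothness of $v\mapsto A(v)$ from smoothness of $v\mapsto v^{(m-1)/m}$ on a neighbourhood where $\re v$ is bounded away from zero, then the Pr\"uss--Simonett bootstrap for smoothing in time. All of this matches the published argument.

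However, the ``main obstacle'' paragraph at the end rests on a misreading of where the multiplication acts. The operator $A(v)u=-m\,v^{(m-1)/m}\,\underline\Delta_N u$ factors as $\underline\Delta_N\colon X_1\to X_0$ followed by pointwise multiplication by $-m\,v^{(m-1)/m}$ \emph{on $X_0$}. Since $X_0=\cH^{0,\gamma}_p(\D)$ is a weighted $L^p$-type space (zero smoothness order), multiplication by any function in $C(\D)$ is bounded on $X_0$ with norm controlled by the sup-norm; no paraproduct or Sobolev multiplier argument is needed. Likewise, there is no question of $v^{(m-1)/m}$ ``preserving the asymptotic summand $\omega\underline\sE_{0,N}$'': that summand sits inside $X_1$ and only $\underline\Delta_N$ is applied to it, after which the result already lies in $X_0$. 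So the step you flag as delicate is in fact immediate, and (H1) follows exactly as the paper states once one has $v\mapsto v^{(m-1)/m}\in C^\infty(V;C(\D))$. Apart from this misconception the argument is correct.
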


\begin{proof}
According to \reftext{Proposition~\ref{rbound}}, the operator $A_{c}(v_{0})$ has
maximal regularity. Choose a neighborhood $V$ of $v_{0}$ in
$(X_{0},X_{1})_{1-1/q,q}$ such that $0<c_{1}\le \revtex v\le c_{2}$ for
all $v\in V$ and positive constants $c_{1},c_{2}$. Since the interpolation
space embeds into $C({\mathbb{D}})$, the space of continuous functions on
the compact space ${\mathbb{D}}$, the mapping $v\mapsto mv^{(m-1)/m}$ is
a smooth map from $V$ to $C({\mathbb{D}})$; its range consists of functions
with real part bounded and bounded away from zero. In particular,
\begin{equation}
\label{smooth}
v\mapsto A(v)\in C^{\infty }(V; {\mathscr{L}}(X_{1},X_{0})),
\end{equation}
so that condition (H1) is fulfilled. As (H2) holds by assumption, the theorem
of Cl\'{e}ment and Li shows the existence of some $T>0$ and a unique
\begin{equation*}
v\in W^{1,q}(0,T;\mathcal{H}^{0,\gamma }_{p}(\mathbb{D}))\cap L^{q}(0,T;
\mathcal{H}^{2,\gamma +2}_{p}(\mathbb{D})_{{N}}\oplus \omega {
\mathscr{E}}_{0,N})
\end{equation*}
solving \reftext{\eqref{PMEa1}}-\reftext{\eqref{PMEa3}}. In particular,
$v\in C([0,T]; (X_{0},X_{1})_{1-1/q,q})$. If $g$ is independent of
$t$, \reftext{\eqref{smooth}} together with \cite[Theorem 5.2.1]{PS} implies that
$v\in C^{\infty }((0,T);X_{1})$.
\end{proof}

\begin{remark}
As $v$ is continuous, bounded and strictly positive, $u=v^{1/m}$ furnishes
a solution
$u\in W^{1,q}(0,T;\mathcal{H}^{0,\gamma }_{p}(\mathbb{D}))\cap L^{q}(0,T;
\mathcal{H}^{2,\gamma +2}_{p}(\mathbb{D})_{{N}}\oplus \omega {
\mathscr{E}}_{0,N}) $ to the porous medium equation in the original form,
see \cite[Remark 2.12]{RS4}.
\end{remark}

\section{Appendix}
\label{sec8}

By $\omega , \omega _{1}, \omega _{2}$ we denote cut-off functions near
$x=0$, i.e. smooth non-negative functions on ${\mathbb{D}}$, supported in
$[0,1)\times Y$, equal to $1$ for small $x$. For simplicity of the presentation
we shall mostly omit the reference to the vector bundles.

\subsection{Function spaces on conic manifolds with boundary}
\label{sec8.1}

We briefly recall the definition of the function spaces used in this article.
More details can be found for example in \cite{KS} or \cite{HS}.

We denote by $2Y$ the double of $Y$; it is a closed manifold. Then
\begin{align}
\label{eq:SobZyl}
\begin{split}
H^{s}_{p}({\mathbb{R}}\times Y)&:=H^{s}_{p}({\mathbb{R}}\times 2Y)\big |_{{
\mathbb{R}}\times Y},
\\
\Hcirc ^{s}_{p}({\mathbb{R}}\times Y) &:=\left \{  u\in H^{s}_{p}({
\mathbb{R}}\times 2Y)\mid \mathrm{supp}\,u\subseteq {\mathbb{R}}\times Y
\right \}  ,
\end{split}
\end{align}
where one uses the product structure on ${\mathbb{R}}\times 2Y$. The inner
product of $L^{2}({\mathbb{R}}\times Y)$ yields an identification of the
dual space of $H^{s}_{p}({\mathbb{R}}\times Y)$ with
$\Hcirc ^{-s}_{p^{\prime }}({\mathbb{R}}\times Y)$, where
$1/p+1/p^{\prime }=1$.

We let ${\mathscr{C}}^{\infty }(Y)$ be the space of smooth (up to and including
the boundary) functions on $Y$, and
\begin{equation*}
{\mathscr{C}}^{\infty }_{c}({\mathbb{R}}\times Y)={\mathscr{C}}^{\infty }_{c}({
\mathbb{R}},{\mathscr{C}}^{\infty }(Y)),\qquad {\mathscr{S}}({\mathbb{R}}
\times Y)={\mathscr{S}}({\mathbb{R}},{\mathscr{C}}^{\infty }(Y)).
\end{equation*}
We extend the map
\begin{equation*}
S_{\gamma }:{\mathscr{C}}^{\infty }_{c}({\mathbb{R}}_{+}\times Y)
\longrightarrow {\mathscr{C}}^{\infty }_{c}({\mathbb{R}}\times Y),
\qquad (S_{\gamma }u)(r,y)=e^{(\gamma -1/2)r}u(e^{-r},y)
\end{equation*}
to the dual $($distribution$)$ spaces. Then we define
\begin{equation*}
{\mathcal{H}}^{s,\gamma }_{p}({\mathbb{R}}_{+}\times Y):= S^{-1}_{
\gamma -\frac{\mathrm{dim}\,Y}{2}}(H^{s}_{p}({\mathbb{R}}\times Y)),
\qquad \gamma \in {\mathbb{R}},
\end{equation*}
with the canonically induced norm; analogously we define
$\cHcirc ^{s,\gamma }_{p}({\mathbb{R}}_{+}\times Y)$. Moreover, we let
${\mathcal{T}}^{\gamma }({\mathbb{R}}_{+}\times Y)$ be the pre-image of
${\mathscr{S}}({\mathbb{R}}\times Y)$ under
$S_{\gamma -\frac{\mathrm{dim}\,Y}{2}}$.

Note that ${\mathcal{H}}^{s,\gamma }_{p}({\mathbb{R}}_{+}\times Y)$ for
$s\in {\mathbb{N}}_{0}$ and $1\le p<+\infty $ consists of all functions
$u(x,y)$ such that
\begin{equation}
\label{eq:conesob}
x^{\frac{\mathrm{dim}\,Y+1}{2}-\gamma } (x\partial _{x})^{j}\partial ^{\alpha }_{y} u(x,y) \in L^{p}\Big ({\mathbb{R}}_{+}\times Y,\frac{dxdy}{x}
\Big ), \qquad j+|\alpha |\le s.
\end{equation}

\subsubsection{Function spaces on ${\mathbb{D}}$ and ${\mathbb{B}}$}
\label{sec8.1.1}

\begin{definition}
${\mathcal{H}}^{s,\gamma }_{p}({\mathbb{D}})$ denotes the space of all
$u\in H^{s}_{p,\mathrm{loc}}({\mathbb{D}}_{\mathrm{reg}})$ such that
$\omega u\in {\mathcal{H}}^{s,\gamma }_{p}({\mathbb{R}}_{+}\times Y)$, equipped
with the norm
\begin{equation*}
\|u\|^{2}_{{\mathcal{H}}^{s,\gamma }_{p}({\mathbb{D}})}=\|\omega u\|^{2}_{{
\mathcal{H}}^{s,\gamma }_{p}({\mathbb{R}}_{+}\times Y)} +\|(1-\omega )u
\|^{2}_{H^{s}_{p}(2{\mathbb{D}})},
\end{equation*}
where $2{\mathbb{D}}$ is the double of ${\mathbb{D}}$ obtained by gluing
two copies of ${\mathbb{D}}$ along $\{0\}\times Y$. Analogously one defines
the space $\cHcirc ^{s,\gamma }_{p}({\mathbb{D}})$.
\end{definition}

The inner product of ${\mathcal{H}}^{0,0}_{2}({\mathbb{D}})$ allows for the
identification
\begin{equation}
\label{eq:dual}
{\mathcal{H}}^{s,\gamma }_{p}({\mathbb{D}})^{\prime }=\cHcirc ^{-s,-\gamma }_{p^{\prime }}({\mathbb{D}}).
\end{equation}

\begin{definition}
Let ${\mathscr{C}}^{\infty ,\gamma }({\mathbb{D}})$ denote the space of all
$u\in {\mathscr{C}}^{\infty }({\mathbb{D}}_{\mathrm{reg}})$ such that
$\omega u\in {\mathcal{T}}^{\gamma }({\mathbb{R}}_{+}\times Y)$. Moreover,
${\mathscr{C}}^{\infty ,\infty }(\mathbb{D}) :=\mathop{
\text{\large $\cap $}}_{\gamma \in \mathbb{R}}{\mathscr{C}}^{\infty ,
\gamma }(\mathbb{D})$.
\end{definition}

Replacing $Y$ by $\partial Y$ and ${\mathbb{D}}$ by ${\mathbb{B}}$ one obtains
the spaces ${\mathcal{H}}^{s,\gamma }_{p}({\mathbb{B}})$,
${\mathscr{C}}^{\infty ,\gamma }({\mathbb{B}})$, and
${\mathscr{C}}^{\infty ,\infty }({\mathbb{B}})$. Interpolation then furnishes
the Besov spaces ${\mathcal{H}}^{s,\gamma }_{p,q}({\mathbb{B}})$ on
${\mathbb{B}}$.

\subsubsection{Function spaces on model cones}
\label{sec8.1.2}

Recall that we write $Y^{\wedge }={\mathbb{R}}_{+}\times Y$.

\begin{definition}
${\mathscr{S}}^{\gamma }(Y^{\wedge })$ is the space of all
$u\in {\mathscr{C}}^{\infty }(Y^{\wedge })$ such that
$\omega u \in {\mathcal{T}}^{\gamma }(Y^{\wedge })$ and
$(1-\omega )u\in {\mathscr{S}}({\mathbb{R}}\times Y)$. Moreover,
${\mathscr{S}}^{\infty }(Y^{\wedge }) :=\mathop{\text{\large $\cap $}}_{
\gamma \in \mathbb{R}}{\mathscr{S}}^{\gamma }(Y^{\wedge })$.
\end{definition}

Let $(U_{1},\kappa _{1}),\ldots ,(U_{N},\kappa _{N})$ be an atlas of the
manifold $2Y$. Using the charts
$({\mathbb{R}}\times U_{j},\widehat{\kappa }_{j})$ with
$\widehat{\kappa }_{j}(r,y)=(r,\langle r\rangle \kappa _{j}(y))$, allows
to define the Sobolev spaces
$H^{s}_{p,\asymp }({\mathbb{R}}\times 2Y)$ as the pullback of the standard
Sobolev spaces, see \cite[Section 4.2]{SS_2} for details. As in \reftext{\eqref{eq:SobZyl}} one obtains the spaces
$H^{s}_{p,\asymp }({\mathbb{R}}\times Y)$ and
$\Hcirc ^{s}_{p,\asymp }({\mathbb{R}}\times Y)$.

\begin{definition}
${\mathcal{K}}^{s,\gamma }_{p}(Y^{\wedge })$ denotes the space of all
$u\in H^{s}_{p,loc}(Y^{\wedge })$ such that
$\omega u \in {\mathcal{H}}^{s,\gamma }_{p}({\mathbb{D}})$ and
$(1-\omega )u \in H^{s}_{p,\asymp }({\mathbb{R}}\times Y)$; an analogous
construction yields $\cKcirc ^{s,\gamma }_{p}(Y^{\wedge })$.
\end{definition}

Similarly one defines the spaces
${\mathscr{S}}^{\gamma }(\partial Y^{\wedge })$,
${\mathscr{S}}^{\infty }(\partial Y^{\wedge })$, and
${\mathcal{K}}^{s,\gamma }_{p}(\partial Y^{\wedge })$ with
$\partial Y^{\wedge }={\mathbb{R}}_{+}\times \partial Y$. Interpolation yields
the Besov spaces ${\mathcal{K}}^{s,\gamma }_{p,q}(\partial Y^{\wedge })$ on
$\partial Y^{\wedge }$.

\subsection{Elements of a parameter-dependent edge type
calculus on manifolds with boundary and conical singularities}
\label{app:cone}

We recall a few basic facts concerning a parameter-dependent calculus on
conic manifolds with boundary. This is a version of the boundary edge calculus
as developed in \cite[Section 7.2]{HS} and \cite[Section 4]{KS} for the
case where the edge is a single point.

By ${\mathcal{B}}^{\mu ,d}(Y;\Sigma )$ we denote the parameter-dependent
elements of order $\mu $ and type $d$ in Boutet de Monvel's calculus with
parameter space $\Sigma $. See Section 9 in \cite{CSS1} for a concise presentation.
We write
$\Gamma _{\beta }= \{z\in {\mathbb{C}}\mid \revtex z =\beta \}$.

For hermitian vector bundles $E_{0},E_{1}$ over $Y$ and
$F_{0},F_{1}$ over $\partial Y$,
$\gamma _{0} , \gamma _{1}\in {\mathbb{R}}$ and
$\ell _{0},\ell _{1}\in {\mathbb{N}}_{0}$ we write $ E_{j}^{\wedge }$ and
$F_{j}^{\wedge }$ for the pullback of $(E_{j})_{|x=0}$ and
$(F_{j})_{|x=0}$ to $Y^{\wedge }$ and $\partial Y^{\wedge }$ and define the
spaces
\begin{align*}
\pmb{{\mathcal{K}}}_2^{s,\gamma _{j}}(Y^{\wedge }; E_{j}^{\wedge },F_{j}^{\wedge }, {\mathbb{C}}^{\ell _{j}}) &= {\mathcal{K}}_2^{s,\gamma _{j}}(Y^{\wedge }; E_{j}^{\wedge }) \oplus {\mathcal{K}}_2^{s,\gamma _{j}-1/2}((
\partial Y)^{\wedge }; F_{j}^{\wedge })\oplus {\mathbb{C}}^{\ell _{j}},
\\
\pmb{{\mathscr{S}}}^{\gamma _{j}}(Y^{\wedge }; E_{j}^{\wedge },F_{j}^{\wedge },{\mathbb{C}}^{\ell _{j}}) &= {\mathscr{S}}^{\gamma _{j}}(Y^{\wedge }; E_{j}^{\wedge }) \oplus {\mathscr{S}}^{\gamma _{j}-1/2}((
\partial Y)^{\wedge }; F_{j}^{\wedge })\oplus {\mathbb{C}}^{\ell _{j}}.
\end{align*}

\begin{definition}%
For $\nu \in {\mathbb{Z}}$, $d\in {\mathbb{N}}_{0}$,
$\gamma _{0},\gamma _{1}\in {\mathbb{R}}$ we denote by
$R^{\mu ,0}_{G}({\mathbb{D}},\Sigma ; \gamma _{0},\gamma _{1})$ the space
of all operator families $g(\sigma )$, $\sigma \in \Sigma $ such that,
for some $\varepsilon >0$,
\begin{eqnarray*}
g(\sigma ) &\in & S^{\nu }_{cl} (\Sigma ; \pmb{{\mathcal{K}}}_2^{0,\gamma _{0}}(Y^{\wedge }; E_{0}^{\wedge },F_{0}^{\wedge },{\mathbb{C}}^{\ell _{0}}),
\pmb{{\mathscr{S}}}^{\gamma _{1}+\varepsilon }(Y^{\wedge }; E_{1}^{\wedge },F_{1}^{\wedge },{\mathbb{C}}^{\ell _{1}})),
\\
g^{*}(\sigma ) &\in & S^{\nu }_{cl} (\Sigma ; \pmb{{\mathcal{K}}}_2^{0,-
\gamma _{1}}(Y^{\wedge }; E_{1}^{\wedge },F_{1}^{\wedge },{\mathbb{C}}^{
\ell _{1}}), \pmb{{\mathscr{S}}}^{-\gamma _{0}+\varepsilon }(Y^{\wedge };
E_{0}^{\wedge },F_{0}^{\wedge }, {\mathbb{C}}^{\ell _{0}}))
\end{eqnarray*}
where the asterisk denotes the pointwise adjoint with respect to the inner
product in
$\pmb{{\mathcal{K}}}_2^{0,0}(Y^{\wedge };\break  E_{0}^{\wedge },F_{0}^{\wedge },{
\mathbb{C}}^{\ell _{0}})$ and
$\pmb{{\mathcal{K}}}_2^{0,0}(Y^{\wedge }; E_{1}^{\wedge },F_{1}^{\wedge },{
\mathbb{C}}^{\ell _{1}})$, respectively.

For $d>0$, the space
$R^{\mu ,d}_{G}({\mathbb{D}},\Sigma ; \gamma _{0},\gamma _{1})$ consists
of all operator families of the form
\begin{equation*}
\sum _{j=0}^{d} g_{j}(\sigma )\partial _{\nu }^{j}: \pmb{{\mathcal{K}}}_2^{s,
\gamma _{0}}(Y^{\wedge }; E_{0}^{\wedge },F_{0}^{\wedge },{\mathbb{C}}^{
\ell _{0}}) \to \pmb{{\mathscr{S}}}^{\gamma _{1}+\varepsilon }(Y^{\wedge }; E_{1}^{\wedge },F_{1}^{\wedge },{\mathbb{C}}^{\ell _{1}})),
\quad s>d-1/2,
\end{equation*}
with
$g_{j}\in R_{G}^{\nu ,0} ({\mathbb{D}}, \Sigma; \gamma _{0}, \gamma _{1})$ and the
normal derivative $\partial _{\nu }$.
\end{definition}

In the previous definition, the data $E_{j}$, $F_{j}$ and
${\mathbb{C}}^{\ell _{j}}$ should be part of the notation; we have omitted
them here for better legibility.

Similarly to the above notation we let
\begin{align*}
\pmb{{\mathcal{H}}}_2^{s,\gamma _{j}}({\mathbb{D}}; E_{j},F_{j},{
\mathbb{C}}^{\ell _{j}}) &= {\mathcal{H}}_2^{s,\gamma _{j}}({\mathbb{D}}; E_{j})
\oplus {\mathcal{H}}_2^{s,\gamma _{j}-1/2}({\mathbb{B}}; F_{j})\oplus {
\mathbb{C}}^{\ell _{j}}
\\
\pmb{{\mathscr{C}}}^{\infty ,\gamma _{j}}({\mathbb{D}},E_{j},F_{j}, {
\mathbb{C}}^{\ell _{j}}) &={\mathscr{C}}^{\infty ,\gamma _{j}}({
\mathbb{D}},E_{j}) \oplus {\mathscr{C}}^{\infty ,\gamma _{j}-1/2}({
\mathbb{B}},F_{j}) \oplus {\mathbb{C}}^{\ell _{j}}.
\end{align*}

\begin{definition}%
\label{CGD2}
The space
$C^{\nu ,0}_{G}({\mathbb{D}},\Sigma ;\gamma _{0},\gamma _{1})$ consists
of all operator-families $g(\sigma )$ of the form
\begin{equation*}
g(\sigma )=\omega _{1}\,a(\sigma )\,\omega _{0}+r(\sigma ),
\end{equation*}
where $\omega _{0},\omega _{1}$ are cut-off functions,
$a\in R^{\nu ,0}_{G}({\mathbb{D}},\Sigma ;\gamma _{0},\gamma _{1})$, and
$r$ has an integral kernel in
\begin{equation*}
{{\mathscr{S}}}(\Sigma ,\pmb{{\mathscr{C}}}^{\infty ,\gamma _{1}+
\varepsilon }({\mathbb{D}},E_{1},F_{1},{\mathbb{C}}^{\ell _{1}}) \,
\widehat{\otimes }_{\pi }\, \pmb{{\mathscr{C}}}^{\infty ,-\gamma _{0}+
\varepsilon }({\mathbb{D}},E_{0},F_{0}. {\mathbb{C}}^{\ell _{0}}).
\end{equation*}
for some $\varepsilon =\varepsilon (g)>0$. For $d\ge 1$,
$C^{\nu ,d}_{G}({\mathbb{D}},\Sigma ;\gamma _{0},\gamma _{1})$ is the space
of all operator families
$\sum _{j=0}^{d} g_{j}(\sigma )\partial _{\nu }^{j}$ with
$g_{j}\in C^{\nu ,0}_{G}({\mathbb{D}},\Sigma ;\gamma _{0},\gamma _{1})$.

We define the principal operator-valued symbol
$\sigma ^{\nu }_{\wedge }(g)$ of $g$ to be the principal operator-valued symbol
of $a$.
\end{definition}

\begin{definition}%
\label{real38}
A \emph{holomorphic Mellin symbol} of order $\mu $ and type $d$, depending
on the parameter $\sigma \in \Sigma $, is a holomorphic function
$h:{\mathbb{C}}\to {\mathcal{B}}^{\mu ,d}(Y;\Sigma )$ such that
\begin{equation*}
\delta \mapsto h(\delta +i\tau ,\sigma ):{\mathbb{R}}\longrightarrow {
\mathcal{B}}^{\mu ,d}(Y;{\mathbb{R}}_{\tau }\times \Sigma )
\end{equation*}
is continuous. We denote the space of all such symbols by
$M^{\mu ,d}_{\mathcal{O}}(Y;\Sigma )$ and set
\begin{equation*}
M^{\mu ,d}_{\mathcal{O}}(\overline{{\mathbb{R}}}_{+}\times Y;\Sigma ) ={
\mathscr{C}}^{\infty }(\overline{{\mathbb{R}}}_{+})\widehat{\otimes }_{\pi }M^{
\mu ,d}_{\mathcal{O}}(Y;\Sigma ).
\end{equation*}
The space $M^{-\infty ,d}_{\gamma }(Y)$ of \emph{smoothing Mellin symbols}
of type $d$ consists of all maps $h_{0}$ which are holomorphic in an
$\varepsilon $-strip around the line $\Gamma _{\frac{n+1}{2}-\gamma }$,
$\varepsilon $ arbitrarily small, taking values in
${\mathcal{B}}^{-\infty ,d}(Y)$.
\end{definition}

For every $\sigma \in \Sigma $, a holomorphic Mellin symbol $h$ and a smoothing
Mellin symbol $h_{0}$ define an operator
\begin{equation*}
\opvtex _{M}^{\gamma -n/2}(h+h_{0})(\sigma ):
\begin{matrix}
{\mathscr{C}}^{\infty }_{c}({\mathbb{R}}_{+}\times Y,E_{0}^{\wedge })
\\
\oplus
\\
{\mathscr{C}}^{\infty }_{c}({\mathbb{R}}_{+}\times \partial Y, F_{0}^{\wedge })%
\end{matrix}
\longrightarrow
\begin{matrix}
{\mathscr{C}}^{\infty }({\mathbb{R}}_{+}\times Y, E_{1}^{\wedge })
\\
\oplus
\\
{\mathscr{C}}^{\infty }({\mathbb{R}}_{+}\times \partial Y,F_{1}^{\wedge })%
\end{matrix}
\end{equation*}
by
\begin{equation}
\label{real3E}
[\opvtex _{M}^{\gamma -n/2}(h+h_{0})(\sigma )u](x)=\int _{\Gamma _{
\frac{1}{2}-\gamma }} x^{-z}(h(x,z,x\sigma )+h_{0}(z))(\widehat{u})(z)
\,\dbar z,
\end{equation}
where $\widehat{u}$ denotes the Mellin transform.

In the sequel, we will consider
$\opvtex _{M}^{\gamma -n/2}(h+h_{0})$ also as an operator
\begin{align}
\label{eq:ident}
\opvtex _{M}^{\gamma -n/2}(h+h_{0}): \pmb{{\mathcal{K}}}_2^{s,\gamma }(Y^{\wedge }; E_{0}^{\wedge },F_{0}^{\wedge },{\mathbb{C}}^{\ell _{0}})
\longrightarrow \pmb{{\mathcal{K}}}_2^{s-\nu ,\gamma -\nu }(Y^{\wedge }; E_{1}^{\wedge },F_{1}^{\wedge },{\mathbb{C}}^{\ell _{1}})),
\end{align}
by identifying $\opvtex _{M}^{\gamma -n/2}(h+h_{0})$ with the operator-matrix
$
\begin{pmatrix}
\opvtex _{M}^{\gamma -n/2}(h+h_{0})&0
\\
0&0
\end{pmatrix}
$.

\begin{definition}
$C^{\nu ,d}({\mathbb{D}},\Sigma ;\gamma , \gamma -\nu )$ denotes the space
of all operator-valued symbols
\begin{align}
\label{edgesymbol}
\begin{split}
b(\sigma ) = & \ x^{-\nu }\omega \opvtex _{M}^{\gamma -n/2}(h)(\sigma )
\omega _{1} + (1-\omega )b_{\mathrm{int}}(\sigma ) (1-\omega _{2})
\\
&+ x^{-\nu }\tilde{\omega }(x[\sigma ])\opvtex _{M}^{\gamma -n/2} (h_{0})
\tilde{\omega }(x[\sigma ]) + g(\sigma ) ,
\end{split}
\end{align}
where $h$ and $h_{0}$ are as above,
$b_{\mathrm{int}} \in {\mathcal{B}}^{\nu ,d}(2{\mathbb{D}};\Sigma )$,
$\omega , \omega _{1}, \omega _{2}$ and $\tilde{\omega }$ are cut-of functions
near $x=0$, $\omega \omega _{1} = \omega $,
$\omega \omega _{2} = \omega _{2}$, and
$g\in C_{G}^{\nu ,d}({\mathbb{D}},\Sigma ; \gamma , \gamma -\nu )$.

The principal edge symbol associated with the operator-valued symbol
$b$ in \reftext{\eqref{edgesymbol}} is
\begin{align}
\label{princedgesymbol}
\begin{split}
\sigma ^{\nu }_{\wedge }(b)(\sigma ) = &\ x^{-\nu } \opvtex _{M}^{\gamma -n/2}(h_{|x=0})(
\sigma )
\\
&+ x^{-\nu }\tilde{\omega }(x|\sigma |)\opvtex _{M}^{\gamma -n/2} (h_{0})
\tilde{\omega }(x|\sigma |)+ \sigma ^{\nu }_{\wedge }(g)(\sigma ),
\end{split}
\end{align}
where $\sigma ^{\nu }_{\wedge }(g)$ is the principal operator-valued symbol
of $g$; it is a map
\begin{equation*}
\pmb{{\mathcal{K}}}_2^{s,\gamma }(Y^{\wedge }; E_{0},F_{0},{\mathbb{C}}^{
\ell _{0}})\longrightarrow \pmb{{\mathcal{K}}}_2^{s-\nu ,\gamma -\nu }(Y^{\wedge }; E_{1},F_{1},{\mathbb{C}}^{\ell _{1}})).
\end{equation*}
\end{definition}

This definition follows the approach in \cite[Section 4.6]{KS} and uses
the alternative representation of the symbols in
\cite[Theorem 4.6.29]{KS}, going back to \cite{GSS}.

Localized to any open $U\subseteq {\mathbb{D}}_{\mathrm{reg}}$, the operator
$b(\sigma )$ is given by a parameter-dependent operator
$B_{U}\in {\mathcal{B}}^{\mu ,d}(2{\mathbb{D}};\Sigma )$ (here we make the
same identification as in \reftext{\eqref{eq:ident}}). The principal symbols of these
patch to a smooth homogenous interior principal symbol and a smooth homogeneous
boundary symbol on $T^{*}{\mathbb{D}}_{\mathrm{reg}}\setminus 0 $ and
$T^{*}{\mathbb{B}}_{\mathrm{reg}}\setminus 0$, respectively. Similarly as for
the symbols introduced after \reftext{\eqref{eq:fullbvp}}, these degenerate as
$x\to 0$ and can be rescaled as explained in \reftext{\eqref{eq:rescaledprinc}} and \reftext{\eqref{rescaledbdry}}.

We call $b$ elliptic, if the interior principal symbol, the rescaled interior
principal symbol, the boundary symbol, the rescaled boundary symbol and
the operator-valued symbol $\sigma _{\wedge }^{\nu }(\sigma )$ are all invertible.
Following \cite[Section 4.5]{KS} we obtain:

\begin{theorem}
Let $b\in C^{\nu ,d}({\mathbb{D}},\Sigma ;\gamma , \gamma -\nu )$,
$d\le \max \{\nu ,0\}$, be elliptic. Then there exists a parametrix
$c\in C^{-\nu ,d'}({\mathbb{D}},\Sigma ;\gamma -\nu , \gamma )$,
$d'\le \max \{-\nu ,0\}$ which inverts $b$ modulo smoothing Green operators,
i.e.
\begin{equation*}
bc-1\in C_{G}^{-\infty ,d'}({\mathbb{D}};\Sigma ,\gamma -\nu ,\gamma -
\nu )\text{ and } cb-1\in C_{G}^{-\infty ,d}({\mathbb{D}};\Sigma ,
\gamma ,\gamma ) .
\end{equation*}
\end{theorem}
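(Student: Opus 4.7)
The plan is to construct $c$ by the standard four-step parametrix construction in parameter-dependent edge-type calculi, exploiting each of the five ellipticity conditions in turn, and then to close the Neumann series by asymptotic summation within the calculus.

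\textbf{Step 1 (interior parametrix).} On $2\D$, the symbol $b_{\rm int}\in\cB^{\nu,d}(2\D;\gS)$ is parameter-dependent elliptic in Boutet de Monvel's calculus (invertibility of the interior principal symbol and of the boundary symbol outside the zero-section). Thus the standard theory in Boutet de Monvel's algebra furnishes a parametrix $c_{\rm int}\in\cB^{-\nu,d'}(2\D;\gS)$, $d'\le\max\{-\nu,0\}$, satisfying $b_{\rm int}c_{\rm int}-1,\ c_{\rm int}b_{\rm int}-1\in\cB^{-\infty,d''}(2\D;\gS)$. Cutting off by $1-\go$ yields the regular-part of the parametrix.

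\textbf{Step 2 (Mellin parametrix near the tip).} The conormal symbol $h_{|x=0}(z,\sigma)$, together with the smoothing Mellin symbol $h_0(z)$, is for every $z$ in a strip around $\Gamma_{(n+1)/2-\gg}$ an elliptic element of Boutet de Monvel's algebra on $Y$ (this is a consequence of the invertibility of $\gs_\wedge^\nu(b)$, combined with the rescaled interior and boundary principal symbols for large $|\im z|$). Using the standard construction of a holomorphic inverse in a strip — via the Mellin-kernel-cut-off techniques recalled in \cite[Section 4.6]{KS} — one obtains a holomorphic Mellin symbol $\tilde h\in M^{-\nu,d'}_\cO(\overline\R_+\times Y;\gS)$ together with a smoothing Mellin symbol $\tilde h_0$ that inverts $h+h_0$ modulo smoothing Mellin terms. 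The corresponding Mellin operator, localized by $\go$, provides the tip-part of the parametrix.

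\textbf{Step 3 (assembling and correcting the edge symbol).} Combining the interior and Mellin parts with suitable nested cut-off functions $\go,\go_1,\go_2,\tilde\go$ gives an element $c_0\in C^{-\nu,d'}(\D,\gS;\gg-\nu,\gg)$ whose principal interior and boundary symbols (and their rescaled versions) are the inverses of those of $b$. Its principal edge symbol $\gs^{-\nu}_\wedge(c_0)$ differs from $\gs_\wedge^\nu(b)^{-1}$ only by a term of Green type, which can be corrected by adding an appropriate $g_0\in C_G^{-\nu,d'}(\D,\gS;\gg-\nu,\gg)$ whose principal operator-valued symbol supplies precisely this difference (here one uses that, by ellipticity, $\gs_\wedge^\nu(b)^{-1}$ exists and defines an element of the allowed edge-symbol class). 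Replacing $c_0$ by $c_0+g_0$, we obtain an approximate right parametrix with $bc_0-1\in C^{-1,\max\{d,d'\}}_G$ plus lower-order Mellin corrections.

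\textbf{Step 4 (Neumann series and asymptotic summation).} With $r=1-bc_0\in C^{-1,\cdot}_G(\D,\gS;\gg-\nu,\gg-\nu)$, I would form the formal Neumann series $\sum_{k\ge 0} r^k$. By the composition rules of the calculus (which preserve the class $C^{\cdot,\cdot}_G$ with controlled orders and types), each $r^k$ lies in $C^{-k,\cdot}_G$, and the asymptotic summation within the Fr\'{e}chet scale of Green symbols produces an element $s\in C^{0,\cdot}_G$ with $\sum_{k<N} r^k-s\in C^{-N,\cdot}_G$ for all $N$. Then $c:=c_0(1+s)$ satisfies $bc-1\in C^{-\infty,d'}_G$. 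The same argument applied from the left with the formal adjoint yields a left parametrix, and a routine comparison shows that left and right parametrices coincide modulo $C^{-\infty,\cdot}_G$, concluding the proof.

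\textbf{Main obstacle.} The delicate point is Step 2: producing a \emph{holomorphic} Mellin inverse of $h$ in the class $M^{-\nu,d'}_\cO(\overline\R_+\times Y;\gS)$ while controlling the type and the parameter dependence, and absorbing the finitely many poles of the pointwise inverse into the smoothing Mellin symbol $\tilde h_0$. Here one relies crucially on the fact that ellipticity of the conormal symbol, together with the rescaled symbol conditions, guarantees invertibility of $h(z,\sigma)$ on cofinite sets of the appropriate strip and permits a kernel-cut-off construction yielding the desired holomorphic Mellin parametrix.
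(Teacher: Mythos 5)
The paper gives no proof of this theorem: it is stated at the very end of the appendix, which the authors describe as merely recalling ``a few basic facts concerning a parameter-dependent calculus on conic manifolds with boundary,'' a version of the boundary edge calculus quoted from Kapanadze--Schulze \cite[Section 4.6]{KS}, Harutyunyan--Schulze \cite[Section 7.2]{HS} and Gil--Schulze--Seiler \cite{GSS}. There is therefore no in-paper proof to compare against. Your sketch---interior parametrix in Boutet de Monvel's algebra on $2\D$, holomorphic Mellin parametrix near the tip via kernel cut-off, Green correction of the edge symbol using invertibility of $\gs_\wedge^\nu(b)$, and closure of the Neumann series by asymptotic summation of Green remainders---is exactly the standard parametrix construction in those cited sources, and it is correct in outline. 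The one place where you are slightly imprecise is Step~2: what the rescaled (interior and boundary) principal symbols give directly is parameter-dependent ellipticity of $h_{|x=0}(z,\sigma)$ with $\im z$ as a large parameter, hence invertibility on $\Gamma_{\frac{n+1}{2}-\gg}$ outside a bounded set, while it is the invertibility of $\gs_\wedge^\nu(b)$ (a Mellin operator plus a compact Green term) that excludes non-bijectivity points on the weight line itself; the remaining finitely many poles of the meromorphic inverse in a strip are then absorbed into the smoothing Mellin term $\tilde h_0$, and the kernel-cut-off procedure produces the holomorphic representative $\tilde h\in M^{-\nu,d'}_\cO$. You do gesture at both ingredients, so this is a matter of phrasing rather than a gap.
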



\end{document}